% ****** Start of file aipsamp.tex ******
%
%   This file is part of the AIP files in the AIP distribution for REVTeX 4.
%   Version 4.1 of REVTeX, October 2009
%
%   Copyright (c) 2009 American Institute of Physics.
%
%   See the AIP README file for restrictions and more information.
%
% TeX'ing this file requires that you have AMS-LaTeX 2.0 installed
% as well as the rest of the prerequisites for REVTeX 4.1
%
% It also requires running BibTeX. The commands are as follows:
%
%  1)  latex  aipsamp
%  2)  bibtex aipsamp
%  3)  latex  aipsamp
%  4)  latex  aipsamp
%
% Use this file as a source of example code for your aip document.
% Use the file aiptemplate.tex as a template for your document.
\documentclass[%
 aip,
 jmp,%
 amsmath,amssymb,
preprint,%
% reprint,%
%author-year,%
%author-numerical,%
]{revtex4-1}

\usepackage{graphicx}% Include figure files
\usepackage{dcolumn}% Align table columns on decimal point
\usepackage{bm}% bold math
%\usepackage[mathlines]{lineno}% Enable numbering of text and display math
%\linenumbers\relax % Commence numbering lines
\usepackage[T1]{fontenc}
\usepackage{lmodern}
\usepackage{amssymb}

\usepackage{longtable}
\usepackage{fancyvrb}
\usepackage[colorlinks=true,citecolor=blue,hypertexnames=false]{hyperref}
\usepackage{color, comment}
\usepackage{amsthm}

\newtheorem{defi}{Definition}
\newtheorem{rem}{Remark}
\newtheorem{thm}{Theorem}

\newtheorem{lem}{Lemma}
\newtheorem{cor}{Corollary}

\newcommand{\GAMMA}{\Gamma}

\def\Sp{\operatorname{Sp}}
\def\arctanh{\operatorname{arctanh}}
\def\C{\mathbb{C}}
\def\res{\operatorname{res}}
\def\Res{\operatorname{Res}}
\def\dilog{\operatorname{dilog}}

\begin{document}

%\preprint{AIP/123-QED}

\title[Integrability conditions for homogeneous potentials]{Third order integrability conditions for homogeneous potentials of degree -1}
\author{Thierry Combot}
\affiliation{IMCCE (France)}
\email{combot@imcce.fr}
\author{Christoph Koutschan}
\affiliation{RISC, Johannes Kepler University Linz (Austria)}
\email{ckoutsch@risc.jku.at}
\thanks{$^1$ Supported by University Paris 7 Diderot.}
\thanks{$^2$ Partially supported by the DDMF project of the MSR-INRIA Joint Centre, 
and by the Austrian Science Fund (FWF): P20162-N18.}

% \title[Sample title]{Sample Title:\\with Forced Linebreak\footnote{Error!}}% Force line breaks with \\
% \thanks{Footnote to title of article.}
% 
% \author{A. Author}
%  \altaffiliation[Also at ]{Physics Department, XYZ University.}%Lines break automatically or can be forced with \\
% \author{B. Author}%
%  \email{Second.Author@institution.edu.}
% \affiliation{ 
% Authors' institution and/or address%\\This line break forced with \textbackslash\textbackslash
% }%
% 
% \author{C. Author}
%  \homepage{http://www.Second.institution.edu/~Charlie.Author.}
% \affiliation{%
% Second institution and/or address%\\This line break forced% with \\
% }%

\date{\today}% It is always \today, today,
             %  but any date may be explicitly specified

\begin{abstract}
We prove an integrability criterion of order~$3$ for a homogeneous
  potential of degree~$-1$ in the plane. Still, this criterion depends
  on some integer and it is impossible to apply it directly except for
  families of potentials whose eigenvalues are bounded. To address
  this issue, we use holonomic and asymptotic computations with error
  control of this criterion and apply it to the potential of the form
  $V(r,\theta)=r^{-1} h(\exp(i\theta))$ with $h\in\mathbb{C}[z],\;\deg
  h \leq 3$. We then find all meromorphically integrable potentials of
  this form.
\end{abstract}

\pacs{02.30.Hq 02.70.Wz}% PACS, the Physics and Astronomy
                             % Classification Scheme.
\keywords{Non-integrability, Homogeneous potentials, Differential Galois theory, Holonomicity}%Use showkeys class option if keyword
                              %display desired
\maketitle

\section{Introduction}

In this article, we will be interested in non-integrability proofs of 
meromorphic homogeneous potentials of degree $-1$ in the plane, and in 
particular in nongeneric cases. Writing our potential~$V$ in polar 
coordinates, and making the Fourier expansion in the angle gives us
\begin{equation}\label{init}
V(r,\theta)=r^{-1}\sum\limits_{k=-\infty}^\infty a_k e^{ik\theta}.
\end{equation}
This type of potential covers many physical problems in celestial
mechanics and $n$-body problems, in particular the anisotropic Kepler
problem, the isosceles $3$-body problem, the colinear $3$-body
problem, the symmetric $4$-body problem and so on. Moreover, for such
a potential there are strong integrability conditions, thanks to the
Morales-Ramis theory~\cite{11} and to a very effective criterion of
Yoshida~\cite{1}. Still, for such a general potential, this criterion
will not be sufficient. This is not particularly because this class of
potentials is large, but because there are nongeneric, very resistant
cases inside.  For example, if we want to study the integrability of
$V(r,\theta)=r^{-1} h(\exp(i\theta))$ with a polynomial~$h$, we have a
priori a potential with $\deg h +1$ complex parameters, and Yoshida's
integrability criterion will restrict this family to a family with
$\deg h-1$ integer parameters. Still one would like to have a finite list of
possible integrable potentials, so as to be able to check
the existence of first integrals one by one. Here we will present a
stronger criterion in Theorems~\ref{thm:main1} and \ref{thm:main3}
which is able to deal with such families, and which therefore is capable to settle
any integrability question on finite dimensional families of
type~\eqref{init}. As an application of our method, we will apply this
criterion in the case $V(r,\theta)=r^{-1} h(\exp(i\theta))$ with
$h\in\mathbb{C}[z],\;\deg h \leq 3$. To do precise statements, let us
now begin with some definitions concerning homogeneous potentials and
integrability.

\begin{defi} We consider the algebraic variety $\mathcal{S}=\{(q_1,q_2,r)\in\mathbb{C}^3,\;\;r^2=q_1^2+q_2^2\}$ and the derivations for a function $f$ on $\mathcal{S}$
\[ \frac{\partial f}{\partial q_1}= \partial_1 f +r^{-1}q_1 \partial_3 f,\quad 
\frac{\partial f}{\partial q_2}= \partial_2 f +r^{-1}q_2 \partial_3 f \]
where $\partial_i$ is the derivative according to the $i$-th variable (the variables of $f$ are $q_1,q_2,r$ in this order). This defines a symplectic form on $\mathbb{C}^2\times \mathcal{S}$ on which we consider a Hamiltonian $H$ of the form
\[
  H(p_1,p_2,q_1,q_2,r)=\frac{1}{2}(p_1^2+p_2^2)-V(q_1,q_2,r)
\]
with the associated system of differential equations
\begin{equation}\label{eq2}
  \dot{r}=r^{-1}(q_1\dot{q}_1+q_2\dot{q}_2),\quad \dot{q}_i=\frac{\partial}{\partial p_i} H, \quad 
  \dot{p}_i=-\frac{\partial}{\partial q_i} H, \quad i=1,2.
\end{equation}
The potential $V$ is assumed to be meromorphic on $\mathcal{S}$ and to have the following form in polar coordinates:
\[
  V(r,\theta) = \frac{1}{r} U(\theta), \quad r\cos\theta=q_1,\;\; r\sin\theta = q_2
\]
This implies that $V$ is homogeneous of degree~$-1$. We say that $I$ is a meromorphic first integral of $H$, if $I$ is a meromorphic function on $\mathbb{C}^2\times \mathcal{S}$ such that
\[
  \dot{I}=\{H,I\}=\sum\limits_{i=1}^2 \left(\frac{\partial}{\partial p_{i}} H \frac{\partial}{\partial q_{i}} I-
  \frac{\partial}{\partial q_{i}} H \frac{\partial}{\partial p_{i}} I\right)=0.
\]
Obviously, the Hamiltonian~$H$ itself is a first integral. We will say that $V$ is meromorphically integrable if it possesses an additional meromorphic first integral which is independent almost everywhere from~$H$.
\end{defi}

\begin{defi} We call $c=(c_1,c_2,c_3)\in\mathcal{S}$ a Darboux point of $V$ if
\begin{equation}\label{eq.Darboux}
  \frac{\partial}{\partial q_1} V(c)= \alpha c_1\quad\hbox{and}\quad 
  \frac{\partial}{\partial q_2} V(c)= \alpha c_2
\end{equation}
where $\alpha\in\C$ is called the multiplicator. Because $V$ has
singularity at $c_3=0$, we will \textbf{always} assume that $c_3\neq0$. Because of homogeneity, we can always
choose $\alpha=0$ or $\alpha=-1$. We say that $c$ is non-degenerate
if $\alpha\neq 0$. To the Darboux point~$c$ we associate a homothetic
orbit given by
\begin{equation}\label{eq.orbit}
  r(t)=c_3 \phi(t),\;\; q_i(t)=c_i\phi(t),\quad p_i(t)=c_i \dot{\phi}(t)\qquad (i=1,2),
\end{equation}
with $\phi$ satisfying the following differential equation
$$\frac{1} {2} \dot{\phi}(t)^2=-\frac{\alpha} {\phi(t)}+E, \quad E\in\mathbb{C}.$$
\end{defi}

In the following, we will often omit the last component of a Darboux point $c\in\mathcal{S}$ as it is defined up to a sign (and the choice of sign does not matter) by the two first components.

\begin{defi} The first order variational equation of $H$ near a homothetic orbit is given by
$$\ddot{X}(t)=\frac{1}{\phi(t)^3} \nabla^2 V(c) X(t)$$
where $\nabla^2 V(c)$ is the Hessian of $V$ (according to derivations in $q$). After diagonalization (if possible) and the change of variable $\phi(t)\longrightarrow t$, 
the equation simplifies to
\[
  2t^2(Et+1)\ddot{X}_i-t\dot{X}_i=\lambda_i X_i, 
\]
where the  $\lambda_i$ are the eigenvalues of the Hessian of~$V$ evaluated at the Darboux point~$c$,
i.e., $\lambda_i\in \Sp\left(\nabla^2 V(c)\right)$.
\end{defi}

\begin{thm} \label{thm:Morales}(Morales, Ramis, Yoshida
\cite{1}\cite{2}\cite{11}\cite{12}) If $V$ is meromorphically integrable, then the neutral component of the Galois group of the variational equation near a homothetic orbit with $E\neq 0$ is abelian at all orders. If we fix the multiplicator of the associated Darboux point to $-1$, the Galois group of the first order variational equation has an abelian neutral component if and only if
\[  \Sp\left(\nabla^2 V(c)\right)\subset   \left\lbrace\textstyle\frac{1}{2}(k-1)(k+2):\; k\in\mathbb{N} \right\rbrace. \]
If the multiplicator of the Darboux point is $0$, the Galois group of the first order variational equation has an abelian neutral component if and only if
\[ \Sp\left(\nabla^2 V(c)\right)\subset \left\lbrace 0 \right\rbrace. \]
\end{thm}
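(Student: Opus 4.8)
The plan is to separate the three assertions: the first is a direct appeal to the general theory, while the two equivalences follow from an explicit study of the Fuchsian equation displayed above. For the first assertion I would invoke the Morales--Ramis--Sim\'o theorem: if $H$ is meromorphically integrable in a neighbourhood of a non-stationary integral curve, then the neutral component of the differential Galois group of the variational equations of \emph{every} order along that curve is abelian. The homothetic orbit~\eqref{eq.orbit} attached to a Darboux point with $E\neq0$ is such a curve --- the hypothesis $E\neq0$ guarantees that $\phi$, and hence the orbit, is non-constant --- so the conclusion is immediate.

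For the two equivalences I would reduce to the scalar equation \[2t^2(Et+1)\ddot{X}_i-t\dot{X}_i=\lambda_i X_i,\] obtained after diagonalizing the Hessian (a non-diagonalizable Hessian is treated separately, a Jordan block only enlarging the group) and setting $t=\phi$. Fixing the multiplicator to $-1$ forces $E\neq0$, and then this equation is Fuchsian with exactly three regular singular points $t=0$, $t=-1/E$, $t=\infty$; up to a M\"obius change of the independent variable it is a Gauss hypergeometric equation. Writing the indicial equations and using the Fuchs relation, the local exponent differences are $\tfrac12\sqrt{9+8\lambda_i}$ at $t=0$, $\tfrac12$ at $t=-1/E$ and $1$ at $t=\infty$. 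After removing the first-order term to bring the equation into $\mathrm{SL}_2$ form (in which the local monodromy at $t=\infty$ is parabolic and that at $t=-1/E$ is an involution), I would invoke the classification of algebraic subgroups $G\subseteq\mathrm{SL}_2(\mathbb{C})$: the neutral component $G^0$ is abelian exactly when $\dim G^0\le1$, that is when $G$ is finite, or triangularizable with $G^0$ a torus or a unipotent line, or projectively dihedral --- equivalently when $G\neq\mathrm{SL}_2$ and $G^0$ is not the whole Borel subgroup. Testing this (through Kovacic's algorithm, or Kimura's table for ${}_2F_1$) against the two \emph{fixed} differences $\tfrac12$ and $1$, the abelian-$G^0$ rows are exactly those for which $\tfrac12\sqrt{9+8\lambda_i}$ is a half-integer, i.e. $9+8\lambda_i=(2k+1)^2$ with $k\in\mathbb{N}$, which is precisely $\lambda_i=\tfrac12(k-1)(k+2)$. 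Conversely, for these values I would exhibit an abelian $G^0$ directly: one exponent combination becomes a non-negative integer, producing a hypergeometric solution and an invariant line, while for $k=0$ the two differences $\tfrac12$ render the equation projectively dihedral.

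For the degenerate case $\alpha=0$ the relation $\tfrac12\dot\phi^2=E$ makes $\phi$ affine in $t$, and the same reduction yields instead $\ddot{X}_i=\frac{\lambda_i}{2E\,t^3}X_i$, whose only singularity, at $t=0$, is irregular: $\lambda_i=0$ gives affine solutions and a trivial $G^0$, whereas any $\lambda_i\neq0$ produces genuine exponential/Stokes behaviour and a non-abelian neutral component, so $G^0$ is abelian if and only if $\Sp(\nabla^2 V(c))\subseteq\{0\}$. The step I expect to be the main obstacle is the middle one, converting ``$G^0$ abelian'' into the arithmetic condition $9+8\lambda_i=(2k+1)^2$: one must separate the merely solvable (full Borel) configurations from the genuinely abelian ones and handle the resonant exponents where logarithmic solutions appear --- when the integer difference $1$ at $t=\infty$ or the difference $\tfrac12$ at $t=-1/E$ interacts with the half-integer pattern at $t=0$ --- since these borderline cases are exactly where the subgroup classification is most delicate.
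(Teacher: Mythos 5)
Your route coincides with the paper's own: invoke Morales--Ramis--Sim\'o for abelianity at all orders, recognize the non-degenerate variational equation as hypergeometric and classify it (the paper cites Kimura's table; your exponent differences $\tfrac12\sqrt{9+8\lambda_i}$, $\tfrac12$, $1$ and the resulting arithmetic $9+8\lambda_i=(2k+1)^2\Leftrightarrow\lambda_i=\tfrac12(k-1)(k+2)$ are correct), and treat the degenerate case via an irregular equation. However, two steps have genuine gaps. First, there is a base-field mismatch you never address: the Morales--Ramis theorem gives abelianity of the Galois group over the field of meromorphic functions on the orbit $\Gamma$, whereas Kimura's table and Kovacic's algorithm classify the group over $\mathbb{C}(t)$. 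The group over the larger field is only a closed subgroup of the group over $\mathbb{C}(t)$, so non-abelianity over $\mathbb{C}(t)$ does not by itself contradict the Morales--Ramis conclusion. The paper closes exactly this gap by noting the equation is Fuchsian: the monodromy fixes meromorphic functions on $\Gamma$, hence lies in the smaller Galois group, and by Schlesinger density it is Zariski dense in the group over $\mathbb{C}(t)$, forcing the two groups to coincide. Second, in the degenerate case your inference ``irregular singularity/Stokes behaviour implies non-abelian neutral component'' is not a valid general principle: at a ramified irregular point the formal local group (the exponential torus extended by the formal monodromy, which inverts it) has \emph{abelian} identity component, so one genuinely needs non-vanishing Stokes multipliers. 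The paper instead identifies the equation as a Bessel equation (of integer order, hence not in the half-odd-integer Liouvillian family) whose Galois group is all of $\mathrm{SL}_2(\mathbb{C})$ unless $\lambda=0$; some such input, or Kovacic's algorithm showing the absence of Liouvillian solutions, is required. Incidentally, $t=0$ is not the only singularity of that equation --- $t=\infty$ is a regular singular point.

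Two smaller corrections. Fixing the multiplicator at $-1$ does not force $E\neq0$; the hypothesis $E\neq0$ is a choice of orbit (for $E=0$ the equation degenerates to an Euler equation with abelian group and yields no condition), so your implication runs backwards. And since Kimura classifies solvability by quadratures rather than abelianity of $G^0$, you rightly flag the Borel-versus-abelian distinction as the delicate point, but your converse argument is incomplete: an invariant line only gives triangularizability, which still allows a full, non-abelian Borel as identity component. The clean finish is that for $\lambda=\tfrac12(k-1)(k+2)$ the equation admits a \emph{rational} solution (the $P_n$ of the paper) together with a rational Wronskian, which forces the group into the unipotent group $\mathbb{G}_a$ (or the trivial group when $n=0$), hence abelian.
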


In fact, this is not exactly the same statement as the original theorem because we allow $r$ to appear in the potential and in the first integrals.

\begin{proof}
Let $\Gamma\subset \mathbb{C}^2\times \mathcal{S}$ denote the curve defined by equation \eqref{eq.orbit} without the singular point $(q_1,q_2,r)=(0,0,0)$, and $M$ an open neighbourhood of $\Gamma$ in $\mathbb{C}^2\times \mathcal{S}$ such that $H$ is holomorphic on $M$. The Hamiltonian $H$ is then well defined and holomorphic on a symplectic manifold $M$ and the additional first integral is meromorphic on~$M$. Hence, using the main theorem of \cite{2}, the neutral component of the Galois group of the variational equation near $\Gamma$ is abelian at all orders over the base field of meromorphic functions on~$\Gamma$. The variational equation is a hypergeometric equation. In \cite{8}, Kimura classifies Galois groups of hypergeometric equations over the base field $\mathbb{C}(t)$. We can use this classification as the Galois group over the base field $\mathbb{C}(t)$ is the same as over the base field of meromorphic functions because the hypergeometric equation is a Fuchsian equation (see page 73 of \cite{11}). This produces the condition on the spectrum of $\nabla^2 V(c)$. The case of a degenerate Darboux point leads to the variational equation
$$\ddot{X}=\lambda t^{-3} X$$
which is a Bessel equation (after a change of variables). Its Galois group over the field of meromorphic functions in $t$ has not an abelian identity component except if $\lambda=0$.
\end{proof}

Note that in the case of a degenerate Darboux point, we explicitly need that the first integral is meromorphic including $r=0$, as the variational equation is not regular singular at this point. The integrability condition for a non-degenerate Darboux point also holds for a potential meromorphic only on $\mathcal{S}^*=\mathcal{S}\setminus \{r=0\}$ and meromorphic first integrals on $\mathbb{C}^2\times \mathcal{S}^*$.

\section{Main Results}

In this section, we are going to state the main theorems of this article.
The remaining parts of this paper are dedicated to their proofs.

\begin{thm} \label{thm:main1}
Let $V$ be a homogeneous potential of degree $-1$ in the plane. We suppose that $c=(1,0)$ is a Darboux point of $V$ with multiplicator~$-1$. If the variational equation is integrable at order~$3$, then the following conditions are fulfilled
$$\Sp\left(\nabla^2V(c)\right)= \left\{2, \textstyle\frac{1}{2}(p-1)(p+2) \right\} \hbox{ for some } p\in\mathbb{N}.$$
If $p$ is even then
$$\left(\frac{\partial^3 V}{\partial q_1 \partial q_2^2} \right)^{\!2} f_1(p)+ \left( \frac{\partial^3 V}{\partial q_2^3} \right)^{\!2} f_2(p)+ \left( \frac{\partial^4 V}{\partial q_2^4} \right) f_3(p)=0,$$
and if $p$ is odd then
$$\frac{\partial^3 V}{\partial q_2^3} =0 \quad \hbox{and}\quad \left(\frac{\partial^3 V}{\partial q_1 \partial q_2^2}\right)^{\!2} f_1(p)+ \left(\frac{\partial^4 V}{\partial q_2^4}\right)f_3(p) =0,$$
where the functions $f_1,f_2,f_3$ satisfy explicit P-finite recurrences,
i.e., linear recurrences with polynomial coefficients.
\end{thm}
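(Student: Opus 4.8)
The plan is to combine the constraints coming from homogeneity with the higher-order Morales--Ramis--Simó obstructions, and then to package the resulting coefficients as integrals of holonomic functions so that their dependence on the integer parameter is governed by recurrences. First I would exploit homogeneity to normalise the data at the Darboux point. Euler's identity for a function homogeneous of degree $-1$ gives $\nabla^2 V(c)\,c=-2\,\nabla V(c)$, and since $\nabla V(c)=-c$ at a Darboux point of multiplicator $-1$, the vector $c=(1,0)$ is an eigenvector of the Hessian with eigenvalue $2$. Symmetry of $\nabla^2 V(c)$ then forces it to be diagonal, $\nabla^2V(c)=\mathrm{diag}(2,\lambda)$, so the first-order variational equation decouples into a radial block (eigenvalue $2$) and a tangential block (eigenvalue $\lambda$). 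Applying Theorem~\ref{thm:Morales} to the tangential block yields $\lambda=\frac12(p-1)(p+2)$ for some $p\in\mathbb{N}$, which already proves the statement about $\Sp(\nabla^2V(c))$. The same Euler relations reduce every mixed derivative $\partial_{q_1}^{a}\partial_{q_2}^{b}V(c)$ to a numerical multiple of $\partial_{q_2}^{b}V(c)$ (for instance $\partial_{q_1}\partial_{q_2}^2V(c)=-3\lambda$), so the genuinely free third- and fourth-order data are $\partial_{q_2}^3V(c)$ and $\partial_{q_2}^4V(c)$, and the conclusion may be written entirely in terms of the derivatives appearing above.

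Next I would write down the variational equations at orders $2$ and $3$. Expanding the Hamiltonian flow around the homothetic orbit in a formal parameter, the order-$k$ correction $X^{(k)}$ satisfies the first-order operator $\mathcal{L}$ applied to it equal to an inhomogeneous forcing built from the lower corrections and from the Taylor coefficients of $V$ at $c$; the third and fourth derivatives of $V$ enter precisely at $\mathrm{VE}_2$ and $\mathrm{VE}_3$. By the Morales--Ramis--Simó theorem (\cite{2}), meromorphic integrability forces the identity component of the Galois group of $\mathrm{VE}_3$ to be abelian. Since the first-order identity component is already abelian (by the spectrum condition), the new obstruction is exactly the appearance of logarithms when solving the inhomogeneous equations; equivalently, integrability holds at order $3$ if and only if certain obstruction integrals --- the pairings of each forcing term against the solutions of the adjoint first-order equation --- vanish. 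After the change of variable $\phi(t)\to t$ the first-order blocks become hypergeometric, and for $\lambda=\frac12(p-1)(p+2)$ the tangential block has a polynomial (Gegenbauer-type) solution whose degree grows with $p$, while the radial block is solved by the orbit data $\dot\phi$ and $\partial_E\phi$.

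I would then compute these obstructions explicitly. The resonant couplings are $X_2^2$ feeding the radial block and $X_1X_2$ feeding the tangential block (both weighted by third derivatives, i.e. by $\partial_{q_1}\partial_{q_2}^2V$ and $\partial_{q_2}^3V$), together with the direct quartic term weighted by $\partial_{q_2}^4V$. Because $\mathrm{VE}_2\sim\partial^3V$ is propagated into the forcing of $\mathrm{VE}_3$, the third derivatives enter the final obstruction \emph{quadratically}, whereas the fourth derivative enters \emph{linearly}: this is exactly the shape $(\partial^3V)^2 f+(\partial^4V)f'=0$ of the statement. The parity of the Gegenbauer solution under the natural involution of the hypergeometric equation splits the analysis: when $p$ is odd the solution has a definite parity that makes the integral multiplying $\partial_{q_2}^3V$ vanish only if this derivative itself vanishes, forcing $\partial_{q_2}^3V=0$ and leaving the single relation between $\partial_{q_1}\partial_{q_2}^2V$ and $\partial_{q_2}^4V$; when $p$ is even both couplings survive and one obtains the full three-term relation. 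The coefficients $f_1,f_2,f_3$ are precisely the residues/obstruction integrals produced this way, each expressible as an integral (or residue) of a product of the explicit hypergeometric solutions.

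The main obstacle --- and the reason the holonomic machinery is needed --- is to control $f_1,f_2,f_3$ as functions of the integer $p$: each is an integral whose integrand is a product of solutions of hypergeometric equations with $p$ entering the parameters, so no closed form is available uniformly in $p$. I would handle this by creative telescoping (Zeilberger-type elimination): the integrands are holonomic in $(t,p)$, closure properties of holonomic functions make the integrals holonomic in $p$, and the telescoping certificate yields the explicit P-finite recurrences asserted for $f_1,f_2,f_3$. Care is required at order $2$: one must verify that the order-$2$ obstruction is automatically satisfiable, so that the particular solution of $\mathrm{VE}_2$ can be propagated into the forcing of $\mathrm{VE}_3$ without introducing spurious logarithms, and only then does the genuine third-order condition emerge.
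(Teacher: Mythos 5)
Your outline reproduces the paper's architecture --- reduction to $\nabla^2V(c)=\mathrm{diag}(2,\lambda)$ and the spectrum condition via Theorem~\ref{thm:Morales}, explicit third-order variational equations whose forcing is quadratic in the third derivatives and linear in the fourth, obstruction coefficients built from the hypergeometric solutions $P_n,Q_n$, and creative telescoping for the P-finite recurrences --- but it misidentifies the mechanism by which the obstruction is extracted, and that mechanism is the actual content of the proof. Because the first-order Galois group here is already the additive group $\mathbb{C}$ (acting by $\arctanh(1/t)\mapsto\arctanh(1/t)+\alpha$), the appearance of logarithms when solving the third-order equation is \emph{not} by itself an obstruction, and requiring your ``pairing'' integrals (residues of the forcing paired against fundamental solutions) to vanish is in general strictly stronger than abelianity; your ``if and only if'' is false and would yield the wrong conditions. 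The correct criterion is that monodromy \emph{commutators} act trivially, which Lemma~\ref{lem:main1} encodes as
$$\frac{\partial}{\partial\alpha}\Res_{t=\infty}F\left(t,\arctanh\left(\frac{1}{t}\right)+\alpha\right)=0,$$
so that only the coefficient of $\alpha^3$ in residues of expressions in $Q_n+\epsilon_n\alpha P_n$ is constrained --- this coefficient extraction is exactly what defines $f_1,f_2,f_3$ in \eqref{eq.f1}--\eqref{eq.f3}. Moreover, solving the third-order system produces doubly and triply nested integrals (the $I_1,I_2,I_3$ of the paper); before any residue criterion can be applied, these must be reduced --- using the Wronskian identity $P_n\dot Q_n-\dot P_nQ_n=(t^2-1)^{-2}$, repeated integration by parts, and Lemma~\ref{lem:main1reciproque} to certify that the inner integrals lie in $\C(t)\left[\arctanh\left(\frac{1}{t}\right)\right]$ (valid only for even $n>1$) --- to integrals of polynomials in $\arctanh$ satisfying the hypotheses of Lemma~\ref{lem:main1}. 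Your proposal contains no substitute for these steps, and without them neither the asserted shape of the conditions nor the holonomic description of the $f_i$ can be obtained.

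Two further gaps. First, for odd $p$ the condition $\partial^3V/\partial q_2^3=0$ is not produced by a parity argument inside the third-order equation: it is the order-$2$ condition (Theorem~\ref{thm:second1}, proved in \cite{9}), inherited because integrability at order $3$ implies integrability at order $2$. Indeed, for odd $n$ the third-order analysis of the $b^2$-term breaks down (the relevant residue no longer vanishes, $\ln(t^2-1)$ enters, and $f_2$ is not even defined by the recipe above), so one must first set $b=0$ and only then read off the two-term relation $a^2f_1(p)+cf_3(p)=0$. Second, the case $p=0$ is genuinely special: there the Galois group of \eqref{eq1} is trivial rather than $\mathbb{C}$, the residues attached to the $a^2$ and $c$ terms vanish, and the obstruction must be found by an explicit computation producing a dilogarithm, whose noncommutative monodromy (detected by a residue at $t=0$) yields the condition $b=0$. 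Your sketch covers neither of these cases, both of which are needed for the statement as it ranges over all $p\in\mathbb{N}$.
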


This theorem is a generalization of the criterion given by Yoshida for
homogeneous potentials in the case of degree $-1$ and dimension~$2$. A
similar theorem could be proven in higher dimensions, but the main
problem is that Theorem \ref{thm:main1} is almost inapplicable in this
form. In most cases, it is necessary to study more closely the
expression of the functions $f_1(p),f_2(p),f_3(p)$ to apply it, and for
the moment, because of limitations of computing power, it seems only
possible to do in dimension $2$ (for which the computations are
already tedious).

\begin{thm} \label{thm:main3}
The functions $f_1(2n),f_2(2n),f_3(2n)$ can be written as
\begin{equation}\begin{split}
f_1(2n) & =\epsilon_1(n)\left(\frac{1511011}{67108864n^2}-\frac{1511011}{134217728n^3}+\frac{31731231}{4294967296n^4} \right)\\
f_2(2n) & =\epsilon_2(n)\left(\frac{22665165}{1073741824n^4}-\frac{22665165}{1073741824n^5}+\frac{298125}{4194304n^6} \right)\\
f_3(2n) & =\epsilon_3(n)\left(-\frac{1740684681}{68719476736n^2}+\frac{1740684681}{137438953472n^3}-\frac{2400813907}{68719476736n^4}  \right)
\label{eq3}
\end{split}\end{equation}
with 
\[|\epsilon_i(n)-1| \leq 10^{-5}\;\;\forall n\geq 100.\]
\end{thm}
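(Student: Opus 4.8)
The plan is to treat $f_1,f_2,f_3$ as holonomic (P-finite) sequences and to read off their large-$n$ behaviour directly from the recurrences furnished by Theorem~\ref{thm:main1}. Those recurrences are written in the eigenvalue parameter $p$; since extracting an arithmetic subsequence is a closure operation on P-finite sequences, each even subsequence $f_i(2n)$ again satisfies a univariate linear recurrence with polynomial coefficients in $n$, which can be produced explicitly by the standard holonomic-closure and creative-telescoping algorithms. This reduces the statement to the analysis of three scalar recurrences, one per $f_i$.

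Next I would determine the \emph{shape} of the asymptotic expansion of each $f_i(2n)$. By the Birkhoff--Trjitzinsky (resp.\ Poincar\'e--Perron) theory of linear difference equations, the admissible asymptotic behaviours of any solution are governed by the characteristic roots of the leading and trailing coefficients together with the associated indicial exponents; matching $f_i(2n)$ to the appropriate fundamental solution pins down a formal expansion $f_i(2n)\sim n^{-d_i}\bigl(c_0+c_1/n+c_2/n^2+\cdots\bigr)$ with $d_1=d_3=2$ and $d_2=4$. The rational prefactors displayed in \eqref{eq3} are exactly the order-three truncations of these expansions, and the explicit coefficients (the powers of two in the denominators, and so forth) are obtained by solving the triangular linear system that the recurrence forces on the $c_k$.

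The main obstacle — and the genuine content of the theorem — is converting this formal expansion into the rigorous uniform bound $|\epsilon_i(n)-1|\le 10^{-5}$ for \emph{all} $n\ge 100$. Writing $\epsilon_i(n)=f_i(2n)/P_i(n)$ with $P_i(n)$ the bracketed prefactor, one expects $\epsilon_i(n)-1=O(n^{-3})$, so that at $n=100$ the bound $10^{-5}$ is attainable only if the implied constant is kept below about $10$. I would derive from the recurrence an inhomogeneous recurrence for the remainder $\epsilon_i(n)-1$ whose forcing term is of the order of the first neglected asymptotic term ($O(n^{-5})$ relative to $P_i$), and then control it by a telescoping, Gronwall-type estimate fed by a majorant-series bound on the tail of the expansion. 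To anchor the estimate, finitely many values $f_i(2n)$ for $n$ below the threshold are evaluated in validated (interval) arithmetic and the certified enclosures are propagated through the remainder recurrence up to $n=100$, where the tail bound takes over.

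The delicate points are twofold. First, one must verify that $f_i(2n)$ is really the solution singled out by the chosen expansion and not contaminated by a faster-growing companion solution; if $f_i(2n)$ is subdominant, naive forward propagation is numerically unstable and the certified computation must instead exploit the defining integral/summation representation or a minimal-solution argument. Second, the constants entering the Gronwall step must be estimated tightly enough that the conclusion is genuinely below $10^{-5}$ rather than merely $O(n^{-3})$; this is where the bulk of the careful (computer-assisted, certified) work lies, and it is precisely the "asymptotic computations with error control" announced in the abstract.
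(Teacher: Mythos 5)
Your overall strategy is structurally the same as the paper's: extract P\nobreakdash-finite recurrences for the $f_i(2n)$, compute Birkhoff-type asymptotic expansions of a fundamental system, fix the connection constants by certified evaluation of the sequences up to $n_0=100$, and control the remainder by a discrete Gronwall-type (perturbation-of-identity) estimate --- this last ingredient is exactly the paper's Theorem~\ref{thm:maj} applied to a resolvent-matrix factorization. However, there is a genuine gap in the step where you pin down the shape of the expansion. The recurrences in question have a \emph{double} characteristic root at $1$: in Lemma~\ref{lem.f3}, for instance, the leading, middle and trailing coefficients behave like $16n^7$, $-32n^7$ and $16n^7$, so the characteristic polynomial degenerates to $16(x-1)^2$. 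Hence the fundamental solutions are not pure power series in $1/n$; they carry harmonic-number factors $H(n)$ (i.e.\ logarithmic corrections), which is precisely why the paper defines ``regular at infinity'' as $u(n)\sim\gamma\, n^\alpha H(n)^\beta$ and why its computed expansion of $f_3(2n)$ contains a term $\tfrac{3}{16}H(n)n^{-4}$. Since the connection coefficient $c_2$ multiplying the $H(n)$-bearing solution is nonzero, your ansatz $f_i(2n)\sim n^{-d_i}\bigl(c_0+c_1/n+c_2/n^2+\cdots\bigr)$ becomes inconsistent at relative order $n^{-2}$: the triangular linear system you propose to solve for the $c_k$ has no solution at that order.

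Consequently, your claim that the rational prefactors in \eqref{eq3} are ``exactly the order-three truncations'' of the expansions cannot be right, and a proof built on it does not reach the stated bound. The last displayed coefficients are \emph{not} asymptotic coefficients at all: the paper states explicitly that, to avoid an $H(n)$ term in the final formulas, one must add a term of the next order ``with a well chosen coefficient'' --- an ad hoc value calibrated so that the purely rational three-term formula absorbs the $H(n)$ contribution over the whole range $n\ge 100$ (a plain truncation is only accurate to about $10^{-3}$). The quantitative point is that at $n=100$ one has $H(n)\approx 5.2$, so the neglected logarithmic term has relative size on the order of $H(n)/n^2\approx 5\times 10^{-4}$, exceeding the required $10^{-5}$ by more than an order of magnitude. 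Your remainder-recurrence, Gronwall estimate and interval-arithmetic anchoring would all survive, but only after (i) enlarging the asymptotic basis to include $H(n)$ terms, and (ii) replacing ``truncation'' by the coefficient-adjustment step that constitutes the final part of the paper's proof.
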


\noindent
With this, we can apply Theorem~\ref{thm:main1} to some concrete examples:

\begin{thm} \label{thm:main2}
Let $V$ be a potential in the plane expressed in polar coordinates by
\begin{equation} \label{mat_eq}
V(r,\theta)=r^{-1}\left(a+b e^{ i\theta}+c e^{2 i\theta}+d e^{3 i\theta} \right).
\end{equation}
If $V$ is meromorphically integrable, then $V$ belongs to one of the following families
{\setlength{\arraycolsep}{0.15em}
\begin{equation}\begin{array}{rclrclrcl}
V & = & r^{-1}a, &
V & = & r^{-1}\big(a+be^{i\theta}\big), &
V & = & r^{-1}\big( ae^{i\theta}+be^{3i\theta}\big),\\[0.5em]
V & = & r^{-1}\big(a+be^{2i\theta}\big),\quad &
V & = & r^{-1}\big(a+be^{3i\theta}\big),\quad &
V & = & r^{-1}\big( a+be^{i\theta}\big)^3,
\end{array}\end{equation}
}with $a,b\in\mathbb{C}$.

\end{thm}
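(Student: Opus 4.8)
The plan is to combine the first-order spectral condition of Theorem~\ref{thm:Morales} with the third-order criterion of Theorems~\ref{thm:main1} and~\ref{thm:main3}, in order to cut the four-parameter family~\eqref{mat_eq} down to a finite list. Writing $V=r^{-1}h(z)$ with $z=e^{i\theta}$ and $h(z)=a+bz+cz^2+dz^3$, the angular part is $U(\theta)=h(e^{i\theta})$, and I would first locate the Darboux points. Condition~\eqref{eq.Darboux} amounts to the vanishing of the angular component of $\nabla V$, i.e. to $U'(\theta)=iz\,h'(z)=0$, so the Darboux points are the nonzero roots of $h'(z)=b+2cz+3dz^2$. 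For $d\neq 0$ this yields two Darboux points $z_1,z_2$; the degenerate situations ($d=0$, coincident roots, or $h(z_j)=0$) must be treated separately. At a root $z_j$ one has $U''(\theta_j)=-z_j^2h''(z_j)$, and since a degree $-1$ homogeneous potential always carries the radial eigenvalue $2$ in its Hessian (Euler's identity $\nabla^2V(c)\,c=-2\nabla V(c)=2c$), the trace relation $\operatorname{Tr}\nabla^2 V=r^{-3}(U+U'')$ gives the transverse eigenvalue $\lambda_j=-1-z_j^2h''(z_j)/h(z_j)$ once $r$ is scaled so that the multiplicator equals $-1$. Theorem~\ref{thm:Morales} then forces $\lambda_j=\tfrac12(p_j-1)(p_j+2)$ for some $p_j\in\mathbb{N}$ at each Darboux point.

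Next, using a rotation $z\mapsto \kappa z$ to send a chosen Darboux point to $z=1$ (that is, $c=(1,0)$) and a scaling $V\mapsto\mu V$ to normalize $h(1)=1$, I would apply Theorem~\ref{thm:main1}. The third- and fourth-order $q$-derivatives of $V$ at $c=(1,0)$ can be written explicitly in terms of $h''(1)$ and $h'''(1)$ (note $h''''\equiv 0$ since $\deg h\le 3$), so the relations of Theorem~\ref{thm:main1} turn into explicit algebraic identities linking the coefficients of $h$, the integer $p$ (through $\lambda=-1-h''(1)=\tfrac12(p-1)(p+2)$), and the auxiliary quantities $f_1(p),f_2(p),f_3(p)$. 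Repeating this at the second Darboux point produces a second batch of identities, so that after removing the two degrees of freedom used for normalization, the remaining continuous parameters are heavily over-determined for each pair $(p_1,p_2)$.

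The decisive step is to rule out large $p$. Here Theorem~\ref{thm:main3} is essential: it shows that $f_1(2n),f_2(2n),f_3(2n)$ are nonzero for all large $n$, with known leading asymptotics in which $f_2$ decays two orders faster than $f_1$ and $f_3$. Consequently the vanishing relation of Theorem~\ref{thm:main1} cannot hold for a genuinely degree-$3$ potential once $p$ exceeds an explicit bound, and the estimates $|\epsilon_i(n)-1|\le 10^{-5}$ for $n\ge 100$ make this rigorous, reducing the problem to finitely many small values of $p$ (together with the analogous odd-$p$ analysis, where Theorem~\ref{thm:main1} additionally imposes $\partial^3V/\partial q_2^3=0$). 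For each surviving small $p$ one solves the resulting polynomial system in $(a,b,c,d)$ explicitly and, after re-absorbing the rotation and scaling, collects the solutions, which turn out to be exactly the six families listed. The exceptional families arise from the degenerate strata: $r^{-1}(a+be^{i\theta})^3$ comes from a double root of $h'$ at which $h$ itself vanishes (a degenerate Darboux point, governed by the Bessel branch of Theorem~\ref{thm:Morales}), while $r^{-1}(ae^{i\theta}+be^{3i\theta})$ comes from the coincidence $\lambda_1=\lambda_2=2$, i.e. $p=2$, at the two Darboux points.

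I expect the main obstacle to be precisely the passage from ``large $p$ is impossible'' to a finite, fully checked list. The asymptotic control in Theorem~\ref{thm:main3} guarantees non-vanishing of the $f_i$ only for $n\ge 100$, so one must verify that the algebraic relations remain incompatible across the entire tail $p\gg 0$, and not merely asymptotically; this means tracking the differing growth orders of $f_1,f_2,f_3$ and bounding the derivative quantities uniformly in $p$. A secondary difficulty is the careful bookkeeping of the degenerate cases (vanishing $h(z_j)$, coincident Darboux points, $d=0$), each of which must be either excluded from or fed back into the main argument so that no meromorphically integrable potential is overlooked.
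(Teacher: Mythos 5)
Your skeleton does match the paper's: order-$1$ spectral conditions at the Darboux points, the order-$3$ criterion of Theorems~\ref{thm:main1} and~\ref{thm:main3} to exclude large eigenvalues, exhaustive checking of small cases, and a separate treatment of the strata without non-degenerate Darboux points (your last paragraph is essentially Theorem~\ref{thm:exceptions}). However, two decisive steps are missing. First, the odd-eigenvalue branch. Theorem~\ref{thm:main3} gives closed forms with error control \emph{only at even arguments} $f_i(2n)$, so there is no ``analogous odd-$p$ analysis'' to appeal to: for odd $p$ the residual relation of Theorem~\ref{thm:main1} involves $f_1(p)$ and $f_3(p)$ at odd indices, for which no asymptotic control is available. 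The paper never uses order-$3$ data at odd indices; instead it first forces both integers to be even using the order-$2$ condition $\partial^3 V/\partial q_2^3=0$ (Theorem~\ref{thm:second1}). For the generic two-Darboux-point family this condition becomes the Diophantine equation~\eqref{eq10}, and disposing of it requires a separate argument (Lemma~\ref{lem:diophantine1}: resultants, real root isolation, rational asymptote slopes, then exhaustive search), which your plan does not contain. Without this parity reduction, the passage to ``finitely many small $p$'' does not go through.

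Second, your mechanism for killing large even eigenvalues is too soft, above all for the generic family with two Darboux points. There the order-$1$ conditions determine the potential, for each integer pair $(k_1,k_2)$, only up to the quadratic extension $s$ with $s^2=6\lambda_1^2\lambda_2+6\lambda_1\lambda_2^2-36\lambda_1\lambda_2$; the order-$3$ conditions must first be rationalized by multiplying with their $s\mapsto -s$ conjugates (Theorem~\ref{thm:order31}), and one must then exclude pairs in which $k_1$ and $k_2$ are \emph{simultaneously} large --- a genuinely two-dimensional tail. Comparing decay rates of $f_1,f_2,f_3$ does not cut this tail: the polynomial coefficients multiplying the $f_i$ in the conditions grow with the $k_i$ at compensating degrees, so whether the sum can vanish depends on the explicit numerical constants and signs certified in~\eqref{eq3}, not on the orders of decay alone. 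The paper resolves this by solving the rationalized condition as a quartic in $k_1$ and proving, with the error-controlled approximations, that its roots are non-real for $k_2\ge 200$ (Theorem~\ref{thm:order32}), then testing the finitely many remaining pairs; the remark around Figure~\ref{fig.dio2} shows why nothing softer suffices, since an auxiliary arithmetic consequence of the conditions has infinitely many solutions with both $k_i$ arbitrarily large. You correctly flag this as the main obstacle, but the decay-rate heuristic you propose in its place would not close it.
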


The first three families have already known additional first integrals \cite{6}, polynomial of degree $1$ or $2$ in $p$. The status of the last three families is unknown. This is not due to an incomplete application of the Morales-Ramis Theorem, but linked to the fact that either they do not possess any Darboux points, or in the last case the only Darboux point is very degenerate and therefore the Morales-Ramis Theorem gives no integrability constraints at any order, as proven in \cite{10}.

In practical problems like Theorem~\ref{thm:main2}, studying
integrability only using the Morales-Ramis criterion is impossible
because of two facts. First we need a Darboux point of our problem;
if we do not have any, the only thing we can do is to try to find an
additional first integral using the direct method of Hietarinta
\cite{6}.

The second problem is the following scenario: inside the family of
potentials given by Theorem~\ref{thm:main2}, there exist submanifolds
in the space of parameters for which the potential possesses only one
Darboux point and the eigenvalue at this Darboux point can be
arbitrarily high. In this case, the higher variational method is
required. But the constraint at order~$2$ does not give sufficient
conditions to conclude, and it is necessary to go to order~$3$.

But the expression of this constraint cannot be written explicitly for
all possible eigenvalues, only for a finite number of them.  To apply
this third-order criterion, we derive P-finite recurrences and
asymptotic expansions with error control in
Theorem~\ref{thm:main3}. This allows us to prove that the
integrability condition is not fulfilled.  The proof of
Theorem~\ref{thm:main2} therefore will be split into two parts:
\begin{enumerate}
\item The first part consists in constructing a manifold
  $M$ in the space of the parameters $a,b,c,d$ such that if the eigenvalues for all Darboux points are real,
  then the parameters belong to~$M$. Then we produce a
  decomposition $M=M_1\cup\dots M_k$ and study each manifold separately. For some of them, the
  corresponding potentials possess sufficiently many Darboux points to give a strong enough condition for integrability only using          
  the Morales-Ramis criterion at order $1$ (there could exist some resistant cases for which a higher variational equation is needed but
  without the phenomenon of arbitrary high eigenvalues like in~\cite{4}). But for specific cases, this phenomenon occurs. It has already
  been noticed by Maciejweski in~\cite{3} who lets this specific case open.
\item The second part will be devoted to these specific manifolds
  $M_i$ where the Morales-Ramis criterion at order~$1$ is almost
  powerless. We use Theorems~\ref{thm:main1}
  and~\ref{thm:main3} to solve these hard cases.
\end{enumerate}

In \cite{14}, the authors deal with a similar difficulty with the spring pendulum for which there is a discrete infinite set of parameters for which there are no obstructions to integrability at order $2$. They also study third order variational equations, but then use analytic tools to study a sequence of monodromy elements, and finally prove that this sequence never vanishes. Thanks to our explicit expression via $P$-finite recurrences, such a problem can be analysed more systematically here.

\section{Eigenvalue Bounding}

\begin{defi} We will denote
$$\mathcal{M}=\left\lbrace V(r,\theta)=r^{-1} U(\theta) \hbox{ with } U \hbox{ meromorphic and } 2\pi\hbox{-periodic} \right\rbrace.$$
Let $V\in\mathcal{M}$. We denote by $d(V)$ the set of Darboux points $c$ of $V$ with multiplicator~$-1$ and $c_3\neq 0$. For $c\in d(V)$ we have $\Sp\left(\nabla^2 V(c)\right)=\{2,\lambda\}$ and we denote
$$\Lambda(c)=\left\lbrace\begin{array}{c} \;\;\lambda\quad\hbox{ if } \lambda\in\mathbb{R} \\ -\infty\;\; \hbox{ otherwise}\\ \end{array}\right..$$
\end{defi}

\begin{defi} We consider a subset $E\subset \mathcal{M}$ and define
$$\Lambda(E)= \sup\limits_{V\in E,\; d(V)\neq \varnothing} \;\inf\limits_{c \in d(V)} \Lambda(c).$$
We say that the problem of finding all meromorphically integrable potentials in $E$ is a bounded eigenvalue problem if $\Lambda(E)<\infty$.
\end{defi}

\begin{rem} We have $\Lambda(\mathcal{M})=\infty$ because of the following family
$$V(r,\theta)=r^{-1}\left((1+a)-2ae^{i\theta}+ae^{2i\theta}\right),\quad a\in\mathbb{R},$$
for which only one Darboux point $c=(1,0)$ exists; the corresponding
eigenvalue is $\lambda=2a-1$. This proves that the family
of potentials considered in Theorem~\ref{thm:main2} is an unbounded
eigenvalue problem.
\end{rem}

\begin{lem}\label{thm:lem1} 
For a potential $V\in\mathcal{M}$ the Darboux points $c$ such that
$c_3 \neq 0$ can be written as 
$c=(c_1,c_2)=(r_0\cos(\theta_0),r_0\sin(\theta_0))$ with $\theta_0$ being a
critical point of~$U$. The Darboux point $c$ is non-degenerate if and
only if $U(\theta_0)\neq 0$ and in this case, the eigenvalues of the
Hessian of $V$, evaluated at $c$, are
\[
  \Sp\left(\nabla^2 V(c)\right)=\left\lbrace 2,\,
  \frac{U''(\theta_0)}{U(\theta_0)} -1 \right\rbrace.
\]
if we choose the multiplicator of $c$ to be~$-1$.
\end{lem}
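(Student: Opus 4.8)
The plan is to work directly with the intrinsic derivations on $\mathcal{S}$ and to reduce everything to the one-variable function $U$ by passing to the radial/tangential frame at the candidate point. First I would compute the gradient of $V$. Extending $V=r^{-1}U(\theta)$ to a neighbourhood in $\mathbb{C}^3$ by $\theta=\arctan(q_2/q_1)$ (so $\theta$ is independent of $r$; the answer is extension-independent since the derivations are intrinsic to $\mathcal{S}$), I get $\partial_1\theta=-q_2/r^2$, $\partial_2\theta=q_1/r^2$, and $\partial_3 V=-r^{-2}U$. Feeding this into $\frac{\partial}{\partial q_i}=\partial_i+r^{-1}q_i\partial_3$ and substituting $q_1=r\cos\theta$, $q_2=r\sin\theta$ yields
$$\frac{\partial V}{\partial q_1}=-r^{-2}\big(\cos\theta\,U+\sin\theta\,U'\big),\qquad \frac{\partial V}{\partial q_2}=r^{-2}\big(\cos\theta\,U'-\sin\theta\,U\big).$$

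Next I would resolve the Darboux equations \eqref{eq.Darboux} in the orthonormal frame $(\cos\theta_0,\sin\theta_0)$, $(-\sin\theta_0,\cos\theta_0)$. Projecting the vector relation $\nabla V(c)=\alpha(c_1,c_2)$ onto the radial direction collapses the trigonometric terms via $\cos^2+\sin^2=1$ and gives $-r_0^{-3}U(\theta_0)=\alpha$, while projecting onto the tangential direction gives $r_0^{-2}U'(\theta_0)=0$. Hence a point with $c_3\neq0$ is a Darboux point if and only if $U'(\theta_0)=0$, i.e. $\theta_0$ is a critical point of $U$, with multiplicator $\alpha=-U(\theta_0)/r_0^3$. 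Non-degeneracy $\alpha\neq0$ is then exactly $U(\theta_0)\neq0$, and fixing the multiplicator to $-1$ by the homogeneity rescaling amounts to choosing $r_0^3=U(\theta_0)$.

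For the spectrum I would exhibit the two eigenvectors explicitly rather than diagonalize a generic $2\times2$ matrix. Applying the derivations once more to the two gradient components above produces the Hessian entries as functions of $U,U',U''$ and $\theta$; evaluating at the critical point (where $U'=0$) I would then verify directly that $(\cos\theta_0,\sin\theta_0)$ is an eigenvector with eigenvalue $2$ and that $(-\sin\theta_0,\cos\theta_0)$ is an eigenvector with eigenvalue $r_0^{-3}\big(U''(\theta_0)-U(\theta_0)\big)=U''(\theta_0)/U(\theta_0)-1$ after substituting $r_0^3=U(\theta_0)$. The radial eigenvalue is in fact forced by homogeneity: since $q_1\frac{\partial}{\partial q_1}+q_2\frac{\partial}{\partial q_2}$ coincides with the ordinary Euler operator $q_1\partial_1+q_2\partial_2+r\partial_3$ on $\mathcal{S}$, differentiating $\mathcal{E}V=-V$ gives $\nabla^2V(c)\cdot(c_1,c_2)=2(c_1,c_2)$, which serves as a sanity check and also fixes the trace, so the second eigenvalue can alternatively be read off as $\mathrm{trace}-2=\big(U''(\theta_0)/U(\theta_0)+1\big)-2$.

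The main bookkeeping obstacle is the second application of the derivation: each Hessian entry acquires cross-terms in $\cos\theta\sin\theta$ coming both from $\partial_i\theta$ and from the $r^{-1}q_i\partial_3$ correction, and these must be reduced using $\cos^2+\sin^2=1$. Working in the radial/tangential frame and invoking $U'(\theta_0)=0$ annihilates most of these terms before they proliferate, which is precisely why checking the two eigenvector relations is far cleaner than expanding the characteristic determinant.
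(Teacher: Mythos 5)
Your proposal is correct and follows essentially the same route as the paper: derive the Darboux conditions $U'(\theta_0)=0$ and $\alpha=-U(\theta_0)/r_0^3$ from the gradient, then compute the Hessian at the critical point in terms of $U(\theta_0)$ and $U''(\theta_0)$ and read off the two eigenvalues. The only cosmetic difference is that you extract the spectrum via the radial/tangential eigenvectors, the Euler identity (which yields the eigenvalue $2$ structurally), and the trace, whereas the paper writes out the $2\times2$ Hessian matrix explicitly and states its eigenvalues directly --- the underlying computation is identical.
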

\begin{proof}
For $V=r^{-1}U(\theta)$ the conditions~\eqref{eq.Darboux} that
$c$ is a Darboux point are:
\begin{align*}
  r_0^{-3}\left(-c_1U(\theta_0)-c_2U'(\theta_0)\right) & = \alpha c_1,\\
  r_0^{-3}\left(-c_2U(\theta_0)+c_1U'(\theta_0)\right) & = \alpha c_2.
\end{align*}
Assuming $c_3\neq0$, it follows that $U(\theta_0)=-\alpha r_0^3$ and
$U'(\theta_0)=0$, which means that $\theta_0$ is a critical point of~$U$.
Since $c_3\neq0$ implies that $r_0\neq0$, we see that the case $\alpha=0$
(degenerate Darboux point) is equivalent to $U(\theta_0)=0$. Setting
$\alpha=-1$ and $U'(\theta_0)=0$ we get the Hessian matrix
\[
  \nabla^2 V(c) = \frac{1}{r_0^5}\left(\!\!\begin{array}{cc}
  (2c_1^2-c_2^2)U(\theta_0)+c_2^2U''(\theta_0) & 
    c_1c_2(3\,U(\theta_0)-U''(\theta_0))\\[0.5em]
  c_1c_2(3\,U(\theta_0)-U''(\theta_0)) & 
    (2c_2^2-c_1^2)U(\theta_0)+c_1^2U''(\theta_0)
  \end{array}\!\!\right)
\]
whose eigenvalues are exactly those claimed above (using 
$U(\theta_0)=r_0^3$).
\end{proof}

Recall that the potentials given by~\eqref{mat_eq} are
$V(r,\theta)=r^{-1}U(\theta)$ with 
$U(\theta)=a+b e^{ i\theta}+c e^{2 i\theta}+d e^{3 i\theta}$.  
We now assume that $V$ possesses at least
one non-degenerate Darboux point~$c$ with $c_3 \neq0$.
After rotation, we can always assume that $c=(1,0)$ is a Darboux
point.  As shown in Lemma~\ref{thm:lem1}, it corresponds to a critical
point for $\theta=0$. Moreover, because this Darboux point is non-degenerate, we know that $U(0)\neq 0$. Then by dilatation, we can
also suppose that $U(0)=1$ and get the following equations
\begin{align*}
U(0) & = a+b+c+d=1,\\
U'(0) & = i(b+2c+3d)=0.
\end{align*}
Solving these equations for $c$ and $d$, yields the expression
\[ 
  V_{a,b}=r^{-1}\left(a+b e^{ i\theta}+(3-3a-2b) e^{2 i\theta}+(2a+b-2)e^{3 i\theta} \right)
\]
for the potentials where $a,b\in\mathbb{C}$.

\begin{thm}\label{thm.families}
If $V_{a,b}$ is meromorphically integrable, then it belongs to one
of the following families
\begin{align*}
E_1 & = r^{-1}\left(-\frac{1}{3}b+1+be^{i\theta}-be^{2i\theta}+\frac{1}{3}be^{3i\theta}\right),\\
E_2 & = r^{-1}\left(-\frac{1}{6}k(k+1)e^{3i\theta}+\frac{1}{4}k(k+1)e^{2i\theta}-
  \frac{1}{12}k^2-\frac{1}{12}k+1\right),\\
E_3 & = r^{-1}\left(-\frac{1}{4}k(k+1)e^{2i\theta}+\frac{1}{2}k(k+1)e^{i\theta}-
  \frac{1}{4}k^2-\frac{1}{4}k+1\right),\\
E_4 & = r^{-1}\left(\frac{(s-6\lambda_2)\lambda_2}{18(\lambda_1+\lambda_2)}e^{3i\theta}-
  \frac{(3\lambda_1+s-3\lambda_2)\lambda_2\phantom{^2}}{6(\lambda_1+\lambda_2)}e^{2i\theta}+\right.,\\
& \qquad\qquad \left.\frac{(6\lambda_1+s)\lambda_2}{6(\lambda_1+\lambda_2)}e^{i\theta}+
  \frac{-9\lambda_1\lambda_2-\lambda_2s+18\lambda_1+18\lambda_2-3\lambda_2^2}{18(\lambda_1+\lambda_2)}\right)
\end{align*}
where $b\in\mathbb{C}$ and $k\in\mathbb{N}$. The quantities arising in $E_4$ are
\begin{align*}
s^2 & = 6\lambda_1^2\lambda_2+6\lambda_1\lambda_2^2-36\lambda_1\lambda_2,\\
\lambda_i & =\frac{1}{2}(k_i-1)(k_i+2)+1\quad (i=1,2),
\end{align*}
with $k_1\in\mathbb{N}\setminus\{0,3\}$ and $k_2\in\mathbb{N}^*$.
\end{thm}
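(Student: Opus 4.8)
The plan is to start from the two-parameter family
$V_{a,b}$ and impose the second-order integrability condition coming
from Theorem~\ref{thm:Morales}, which forces the non-trivial eigenvalue
$\lambda=U''(0)/U(0)-1$ at the Darboux point $c=(1,0)$ to lie in the
Yoshida set $\{\tfrac{1}{2}(p-1)(p+2):p\in\mathbb{N}\}$. By
Lemma~\ref{thm:lem1} this is a single polynomial equation in $a,b$ once
we write $U''(0)$ explicitly in terms of $a,b$, so it defines a curve in
parameter space indexed by the integer~$p$. However, $c=(1,0)$ is only
one of the critical points of $U$; the first step is therefore to
enumerate \emph{all} critical points of $U(\theta)=a+be^{i\theta}+(3-3a-2b)e^{2i\theta}+(2a+b-2)e^{3i\theta}$,
which amounts to solving $U'(\theta)=0$, a polynomial equation of
degree~$3$ in $z=e^{i\theta}$ (after factoring out the known root at
$z=1$). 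Each additional non-degenerate Darboux point contributes its own
Yoshida constraint, and the families $E_1,\dots,E_4$ should emerge as the
different ways of simultaneously satisfying these constraints depending
on how many Darboux points survive and which of their eigenvalues are
allowed to be integers of the Yoshida form.

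The key computational step is to set up the system coming from the
Darboux points. First I would compute $U'(z)$ and $U''(z)$ as explicit
polynomials, divide $U'$ by $(z-1)$ to isolate the remaining critical
points, and then, at each such root $z_0$, write down the eigenvalue
$\lambda(z_0)=U''(\theta_0)/U(\theta_0)-1$ from Lemma~\ref{thm:lem1}. The
degenerate case $U(\theta_0)=0$ (multiplicator~$0$) has to be handled
separately using the second part of Theorem~\ref{thm:Morales}, which
demands the eigenvalue be~$0$; this typically either kills a branch or
forces a coincidence of roots. The families $E_1,E_2,E_3$ correspond to
situations where the quadratic factor of $U'/(z-1)$ degenerates (one of
its roots coincides with $z=1$ or goes to infinity, i.e.\ a leading
coefficient vanishes), leaving essentially one effective eigenvalue
constraint parametrised by a single integer~$k$ or by a free
parameter~$b$. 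The family $E_4$ is the generic branch in which $U$ has
two genuine extra Darboux points with eigenvalues $\lambda_1,\lambda_2$
both of Yoshida type (hence $\lambda_i=\tfrac{1}{2}(k_i-1)(k_i+2)+1$),
and the quantity $s$ with $s^2=6\lambda_1^2\lambda_2+6\lambda_1\lambda_2^2-36\lambda_1\lambda_2$
arises as a resultant/discriminant expression when one eliminates $a,b$
in favour of the two eigenvalues.

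Concretely, for the generic branch I would treat $\lambda_1,\lambda_2$ as
the symmetric functions of the two non-trivial critical values and invert
the map $(a,b)\mapsto(\lambda_1,\lambda_2)$, solving the resulting
rational system for $a,b$ and substituting back into $V_{a,b}$ to obtain
the displayed coefficients of $E_4$; the appearance of $s$ reflects the
quadratic (hence two-valued) nature of this inversion, and the
constraints $k_1\in\mathbb{N}\setminus\{0,3\}$, $k_2\in\mathbb{N}^*$ record
which integer values are admissible (e.g.\ $k_1=0$ or $k_1=3$ would force
a degeneration into one of the earlier families or make a denominator
vanish). The main obstacle, as I see it, is the bookkeeping of
degeneracies: I expect the hard part to be the careful case analysis of
when critical points collide, when $U(\theta_0)$ vanishes, and when the
eigenvalues $2$ and $\lambda$ coincide (the resonant case $\lambda=2$,
i.e.\ $p$ such that $\tfrac{1}{2}(p-1)(p+2)=2$), because each such
coincidence changes the dimension count and determines whether a branch
collapses to a one-parameter family like $E_1$ or to the integer-indexed
families $E_2,E_3$. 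Ensuring that the four listed families are exhaustive
— that no admissible combination of Yoshida integers has been missed —
is precisely where the order-$3$ criterion of Theorems~\ref{thm:main1}
and~\ref{thm:main3} must be invoked for the resistant branches, and I
would defer those cases to the asymptotic analysis rather than attempt to
settle them by the order-$2$ condition alone.
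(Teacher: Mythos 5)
Your strategy is in essence the paper's: impose the first-order Morales--Ramis/Yoshida eigenvalue condition at \emph{all} Darboux points of $V_{a,b}$, separate a generic branch from the degenerate ones, and obtain $E_4$ by inverting the map from $(a,b)$ to the eigenvalues (the paper merely packages the eigenvalue system as the resultant $P_{a,b}(\lambda)=\res_z\!\left(\tilde{h}_{a,b}(z)-(\lambda+1)h_{a,b}(z),\,h'_{a,b}(z)\right)$ instead of solving for the critical points explicitly, and finds the degenerate loci $b=0$ and $2a+b=2$ by asking when this resultant collapses identically or $h'_{a,b}$ acquires the root $z=0$). But two concrete points are wrong. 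First, the Darboux-point count: since $U'(\theta)=izh'_{a,b}(z)$ with $z=e^{i\theta}\neq 0$, and $h'_{a,b}$ is \emph{quadratic} with the known root $z=1$, the generic $V_{a,b}$ has exactly one additional Darboux point, not two; in $E_4$ one of the integers $k_1,k_2$ encodes the eigenvalue at $z=1$ itself and the other the eigenvalue at the single extra point. Your picture of ``two genuine extra Darboux points'' (three in total) would put three Yoshida constraints on the two parameters $(a,b)$, an overdetermined system that cannot reproduce the two-integer family $E_4$.

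Second, and more seriously, your last paragraph defers the exhaustiveness of the list to the order-$3$ criterion of Theorems~\ref{thm:main1} and~\ref{thm:main3}. This misreads what Theorem~\ref{thm.families} claims: it is a pure first-order statement, and exhaustiveness is automatic from the algebraic case analysis --- the generic inversion yields $E_4$ for every admissible pair $(k_1,k_2)$, the degenerate loci yield $E_2$ and $E_3$, and the excluded case $k_1=k_2=0$ (where $\lambda_1+\lambda_2=0$ makes the formula for $E_4$ meaningless) yields the one-parameter family $E_1$; no branch is left over. The order-$3$ machinery enters the paper only \emph{after} this theorem (Theorems~\ref{thm:order30}--\ref{thm:order32}), to eliminate most members of these families: it establishes non-integrability, not membership, so it cannot fill the role you assign to it. As written, your proposal therefore leaves the statement unproven on exactly the branches you call ``resistant'' --- you have conflated Theorem~\ref{thm.families} with the full classification of Theorem~\ref{thm:main2}. (A minor slip of the same kind: the Yoshida spectrum condition you invoke is the order-$1$ condition of Theorem~\ref{thm:Morales}, not a second-order condition.)
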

\begin{proof} 
For all non-degenerate Darboux points
$c=(\gamma \cos(\theta_0),\gamma \sin(\theta_0))$
the corresponding eigenvalue~$\lambda$ satisfies 
\begin{equation}\label{eq.condEV}
  U''(\theta_0)-(\lambda+1) U(\theta_0)=0\quad\text{and}\quad U'(\theta_0)=0
\end{equation}
(note that this condition is also satisfied if $c$ is degenerate). 
We write $U(\theta)=h_{a,b}(\exp(i\theta))$, 
$U'(\theta)=izh'_{a,b}(\exp(i\theta))$,
and $U''(\theta)=\tilde{h}_{a,b}(\exp(i\theta))$ with
\begin{align*}
h_{a,b}(z) & = a+bz+(3-3a-2b)z^2+(2a+b-2)z^3,\\
\tilde{h}_{a,b}(z) & = -bz-4(3-3a-2b)z^2-9(2a+b-2)z^3.
\end{align*}
So to find the eigenvalues of all Darboux points, one just needs to
compute the following resultant which corresponds to the
conditions~\eqref{eq.condEV}:
\begin{align*}
P_{a,b}(\lambda) & = \res_z\!\left(\tilde{h}_{a,b}(z)-(\lambda+1)h_{a,b}(z),\,h'_{a,b}(z)\right)\\
& = (2a+b-2)(6a+2b-6+(\lambda+1))(-18ab^2-6b^3+18b^2+(\lambda+1)\\
&\quad \times(108a^3+108a^2b-216a^2+36ab^2+108a-108ab-9b^2+4b^3)) 
\end{align*}

All the roots of $P_{a,b}(\lambda)$ correspond to an eigenvalue of
some Darboux point, except possibly in those cases $(a,b)$ where
$P_{a,b}$ vanishes as a polynomial in~$\lambda$ or in the case where
$h'_{a,b}(z)$ has the root~$0$.

Let us begin with the special cases. We compute the points
$(a,b)\in\mathbb{C}^2$ for which $P_{a,b}=0$ in
$\mathbb{C}[\lambda]$. We find that it is the zero set of
the ideal $\langle 2a+b-2\rangle \cap \langle a,b\rangle$.
Moreover, the polynomial $h'_{a,b}(z)$ has a zero root if and only if
$b=0$. So, all the specific cases belong to the zero set of
$\langle 2a+b-2\rangle \cap \langle b\rangle$.
First, for $b=0$ we find
\begin{align*}
Q_1 & = \res_z(\tilde{h}_{a,0}(z)-(\lambda+1)h_{a,0}(z),\,h'_{a,0}(z)/z,z)\\
    & = 216(a-1)^3(6a-6+(\lambda+1)),
\end{align*}
and second, for $b=2-2a$ we get
\begin{align*}
Q_2 & = \res_z(\tilde{h}_{a,2-2a}(z)-(\lambda+1)h_{a,2-2a}(z),\,h'_{a,2-2a}(z),z)\\
    & = -4(a-1)^2(2a-2+(\lambda+1)).
\end{align*}
As we know that the eigenvalues should be of the form
$\frac{1}{2}(k-1)(k+2)$, $k\in\mathbb{N}$, we obtain the potentials $E_2$
and $E_3$ from these two cases.

Now for the generic case, we express $a$ and $b$ depending on the
roots of $P_{a,b}(\lambda)$ and obtain the expression~$E_4$.  
Since it is not valid for $k_1=k_2=0$, we study this case
separately and find the condition $a=-\frac{1}{3}b+1$, which 
gives~$E_1$.  Note that fixing $\lambda_1=0$ in $E_4$ yields the
potential~$E_2$, whereas $\lambda_1=6$ results in~$E_3$. The case 
$k_2=0$ produces $V=r^{-1}$ which already belongs to $E_1$.
\end{proof}

\begin{cor} With the same notation as in Theorem~\ref{thm.families}, 
we have $\Lambda(E_1)=-1$ and 
$\Lambda(E_2)=\Lambda(E_3)=\Lambda(E_4)=\infty$.
\end{cor}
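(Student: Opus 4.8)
The plan is to compute $\Lambda(E_i)$ for each family by analysing the eigenvalues attached to \emph{all} Darboux points of the corresponding potentials, not just the distinguished one at $\theta=0$. Recall from Lemma~\ref{thm:lem1} that the Darboux points correspond to critical points $\theta_0$ of $U$ with $U(\theta_0)\neq 0$, and that the relevant eigenvalue is $\lambda=U''(\theta_0)/U(\theta_0)-1$. Writing $U(\theta)=h(\exp(i\theta))$ with $h$ the associated cubic polynomial, the critical points are governed by the roots of $h'_{a,b}(z)$, and the eigenvalues are precisely the roots of the resultant $P_{a,b}(\lambda)$ computed in the proof of Theorem~\ref{thm.families}. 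The quantity $\inf_{c\in d(V)}\Lambda(c)$ is therefore the smallest real eigenvalue among these roots (or $-\infty$ if some root is non-real), and $\Lambda(E_i)$ is the supremum of this infimum as the parameters range over the family.

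First I would treat $E_1$. Here the potential depends on a single parameter $b\in\mathbb{C}$, and I expect that substituting $a=-\frac{1}{3}b+1$ into $P_{a,b}(\lambda)$ causes a collapse so that all Darboux points carry the eigenvalue $\lambda=-1$ independently of~$b$ (this is consistent with the fact that $E_1$ contains the known integrable potentials). Concretely, I would verify by direct computation that every real root of the resulting polynomial equals $-1$, so that $\inf_{c}\Lambda(c)=-1$ for every admissible $b$, and hence $\Lambda(E_1)=-1$.

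For $E_2$, $E_3$, and $E_4$ the strategy is to exhibit, for arbitrarily large target values, a choice of parameters producing a Darboux point whose eigenvalue is real and as large as desired, while \emph{every} Darboux point of that potential has real eigenvalue bounded below by that target. The families $E_2$ and $E_3$ are parametrised by $k\in\mathbb{N}$ through the distinguished eigenvalue $\frac{1}{2}(k-1)(k+2)$, which tends to $+\infty$; I would check by computing the full resultant $P_{a,b}(\lambda)$ on these one-parameter curves that the \emph{other} eigenvalues are likewise real and grow without bound (or that the infimum over all Darboux points still tends to $+\infty$). This reduces to factoring a polynomial in $\lambda$ whose coefficients are polynomials in $k$ and checking positivity of its roots for large~$k$. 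The family $E_4$ is the genuinely delicate one: it depends on two integer parameters $k_1,k_2$ through $\lambda_1,\lambda_2$, and the quantity $s$ is defined only up to sign by a square root, so the potential is real only under an arithmetic constraint forcing $s^2\geq 0$. I would send $k_2\to\infty$ (hence $\lambda_2\to\infty$) along a suitable subfamily where $s^2=6\lambda_1\lambda_2(\lambda_1+\lambda_2-6)$ stays nonnegative, and show that the minimal Darboux-point eigenvalue still diverges.

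The hard part will be the analysis of $E_4$: because $s$ enters rationally and the potential is a genuine two-parameter family, one must be careful that sending $\lambda_2\to\infty$ keeps \emph{all} eigenvalues real and keeps the infimum over Darboux points unbounded, rather than inadvertently creating a small or complex eigenvalue at some secondary critical point of $U$. Establishing that $\Lambda(E_4)=\infty$ amounts to producing an explicit sequence of parameters for which $\inf_{c}\Lambda(c)\to+\infty$; verifying that no secondary Darboux point spoils this requires tracking the real roots of $P_{a,b}(\lambda)$ uniformly along the sequence, and this is where the bulk of the computation lies. The conclusion $\Lambda(E_2)=\Lambda(E_3)=\Lambda(E_4)=\infty$ then shows these three families constitute genuinely unbounded eigenvalue problems, which is exactly the obstruction motivating the higher-order criterion of Theorems~\ref{thm:main1} and~\ref{thm:main3}.
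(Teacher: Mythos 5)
Your overall framework is the same as the paper's implicit one: the corollary is read off from the inventory of Darboux points and eigenvalues established in the proof of Theorem~\ref{thm.families}. Your treatment of $E_1$ is sound (on $a=1-\frac{1}{3}b$ one gets $h'_{a,b}(z)=b(1-z)^2$ and $P_{a,b}(\lambda)=b^3(\lambda+1)^2$, so the unique Darboux point $z=1$ carries $\lambda=-1$), and so is the conclusion for $E_4$, though there you over-complicate: since $h$ is cubic, $h'$ is quadratic, so there are at most two Darboux points, both accounted for by the construction of $E_4$, and their eigenvalues $\frac{1}{2}(k_i-1)(k_i+2)$ are real and independently unbounded by choice of $k_1,k_2$. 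No limiting argument, root tracking, or reality constraint on $s$ is needed; the potentials are complex anyway, and only reality of the eigenvalues enters the definition of $\Lambda$.

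The genuine gap is your treatment of $E_2$ and $E_3$. Your key premise --- that ``the eigenvalues are precisely the roots of the resultant $P_{a,b}(\lambda)$'' --- fails exactly on these two loci, and this is no accident: in the proof of Theorem~\ref{thm.families} these families arise precisely from the exceptional parameter sets where that identification breaks down. $E_3$ lies on $2a+b-2=0$, where $P_{a,b}$ vanishes identically in $\lambda$, so computing the full resultant on this curve yields the zero polynomial and no information at all. $E_2$ lies on $b=0$, where $h'_{a,0}(z)=6(1-a)z(1-z)$ has the root $z=0$; since $z=e^{i\theta_0}$ never vanishes, this root corresponds to no Darboux point, yet it contributes the factor $(\lambda+1)$ to $P_{a,0}(\lambda)=216\,a(a-1)^3(6a-5+\lambda)(\lambda+1)$. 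Followed literally, your plan would therefore record a putative eigenvalue $-1$ for every member of $E_2$, giving $\inf_{c\in d(V)}\Lambda(c)=-1$ for all $k$ and hence $\Lambda(E_2)=-1$ --- the opposite of what you are trying to prove. The repair is the paper's special-case computation: on $b=0$ take the resultant with $h'_{a,0}(z)/z$ (the polynomial $Q_1$), and on $2a+b-2=0$ recompute the resultant with the true, dropped degrees (the polynomial $Q_2$). Both show that $E_2$ and $E_3$ each possess exactly one Darboux point, located at $z=1$, with eigenvalue $\frac{1}{2}(k-1)(k+2)\to\infty$, whence $\Lambda(E_2)=\Lambda(E_3)=\infty$.
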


\begin{rem} 
The types of $E_2$, $E_3$ and $E_4$ differ fundamentally although they
are all unbounded eigenvalue problems. This is because the dimension
of $E_4$ is $2$ and the dimension of $E_2$ and $E_3$ is
only~$1$. Because of that, we could call $E_4$ a doubly unbounded
eigenvalue problem because it possesses two Darboux points whose
eigenvalues can be independently arbitrarily high. Because of that, we
will need to apply a third order integrability criterion
simultaneously at the two Darboux points. The potential $E_1$ has only
one Darboux point with eigenvalue~$-1$. This eigenvalue belongs to the
Morales-Ramis table and so higher variational methods will be
required, but only for this fixed eigenvalue (which is much easier).
\end{rem}

In the parameter space, we get $4$ algebraic
mani\-folds. For $E_2$, $E_3$, and $E_4$, a tedious treatment with
higher variational equations is required. For $E_1$ we will be able to
check integrability easily with Theorem~\ref{thm:main1}. A similar
procedure could be applied to any set of homogeneous potentials
depending rationally on some parameters. Here computing power is the
main limitation; in particular, because for typical problems, the
number of parameters is much smaller than the number of roots which
requires resultant computations and prime ideal
decompositions. One should note that we have deliberately chosen a set
of potentials~\eqref{mat_eq} which is particularly difficult to
treat. For most common problems (outside the general complete
classification), these unbounded eigenvalue manifolds have small
dimension ($1$ in the case found by~\cite{3}) or even inexistent like
in~\cite{4} or~\cite{5}.

\section{Higher Order Variational Methods}
We will first recall some properties of the solutions of the first
order variational equations. After diagonalisation and in the
integrable case, the equation is the following (after fixing the
energy $E=1$)
\begin{equation}\label{eq1}
2t^2(1+t)y''(t) - ty'(t) -\textstyle{\frac{1}{2}}(n-1)(n+2)y(t) = 0\qquad (n\in\mathbb{N}).
\end{equation}
After the change of variables $t\longrightarrow (t^2-1)^{-1}$,
this equation becomes
\begin{equation}\label{eq.diffeqPQ}
  (t^2-1) y''(t) + 4ty'(t) - (n-1)(n+2) y(t)=0\qquad (n\in\mathbb{N}).
\end{equation}
A basis of solutions is given by $(P_n,Q_n)$ where $P_n$ are 
polynomials in $t$ (for $n\geq 1$) and the functions $Q_n$ are
\[
  Q_n(t)=P_n(t)\int \frac{1}{(t^2-1)^2P_n(t)^2} dt.
\]
The functions~$Q_n$ are multivalued except for $n=0$ which will be a
special case. Indeed, the Galois group of \eqref{eq1} in this case is
$I\!d$ instead of $\mathbb{C}$.

The polynomials~$P_n$ can be computed using the Rodrigues type formula
\begin{equation}\label{eq.P}
  P_n(t)=\frac{1}{t^2-1}\frac{\partial^{n-1}}{\partial t^{n-1}} (t^2-1)^n\qquad (n\geq 1)
\end{equation}
which also gives a normalisation for their leading coefficient. 
The functions $Q_n$ can be written as
\begin{equation}\label{eq.Q}
  Q_n(t)=\epsilon_n P_n(t) \arctanh\left(\frac{1}{t}\right)+\frac{W_n(t)}{t^2-1}\qquad (n\geq 1)
\end{equation}
where $W_n$ are polynomials given by
\begin{align*}
  W_{2k}(t) & = \frac{(-1)^k(t^2-1)}{2^{4k}}\left(
    \frac{\pi\ {}_2F_1\big(\frac12-k,k+1,\frac12,t^2\big)}{\Gamma\big(k+\frac12\big)^2}+{}\right.\\
           & \quad\left.
    \frac{2kt(2k+1)\arctanh(t)\ {}_2F_1\big(1-k,k+\frac32,\frac32,t^2\big)}{(k!)^2}\right),\\
  W_{2k+1}(t) & = \frac{(-1)^k(t^2-1)}{2^{4k+2}}\left(
    \frac{\pi t(k+1)(2k+1){}_2F_1(\frac12-k,k+2,\frac32,t^2)}{\Gamma(k+\frac32)^2}-{}\right.\\
  & \quad\left.\frac{2\arctanh(t)\ {}_2F_1(-k,k+\frac32,\frac12,t^2)}{(k!)^2}\right)
\end{align*}
and $\epsilon_n$ is a real sequence given by
\[
  \epsilon_n=\frac{n(n+1)}{4^n(n!)^2}.
\]
Conventionally, we will take for $n=0$:
$$P_0(t)=\frac{t}{t^2-1}, \qquad Q_0(t)=\frac{1}{t^2-1}.$$
%In fact, this also satisfy the formulas before, considering that the derivation -1 is an integration from 0.

\begin{lem}
The functions $P_n(t)$ and $\frac{1}{\epsilon_n} Q_n(t)$ satisfy the
differential equation~\eqref{eq.diffeqPQ} and the three-term
recurrence
\[
  4n(n+1)(n+2)y_n(t) - 2t(n+2)(2n+3)y_{n+1}(t) + (n+3)y_{n+2}(t) = 0.
\]
\end{lem}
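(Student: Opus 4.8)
The plan is to verify both assertions of the lemma directly from the explicit representations of $P_n$ and $Q_n$, treating the differential equation and the recurrence as two separate tasks. For the differential equation~\eqref{eq.diffeqPQ}, the most transparent route is the Rodrigues formula~\eqref{eq.P}. Writing $u(t)=(t^2-1)^n$, one checks that $u$ satisfies the first-order equation $(t^2-1)u'(t)-2nt\,u(t)=0$. Differentiating this identity $n$ times by the Leibniz rule produces a second-order recurrence-free relation for $v(t)=u^{(n-1)}(t)=\partial_t^{n-1}(t^2-1)^n$, and since $P_n=(t^2-1)^{-1}v$, substituting $v=(t^2-1)P_n$ and collecting terms yields exactly~\eqref{eq.diffeqPQ}. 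That $\tfrac{1}{\epsilon_n}Q_n$ solves the same equation is immediate once $P_n$ does: by construction $Q_n=P_n\int (t^2-1)^{-2}P_n^{-2}\,dt$ is a second solution obtained by reduction of order, the integrand being the reciprocal of the Wronskian weight $(t^2-1)^2$ dictated by the coefficient $4t/(t^2-1)$ in~\eqref{eq.diffeqPQ}; the scalar $\epsilon_n^{-1}$ does not affect this.

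For the three-term recurrence, I would exploit that $P_n$ and $\epsilon_n^{-1}Q_n$ are both annihilated by the same second-order operator with $n$-dependent coefficients, so a contiguity-type relation linking three consecutive indices must hold. The cleanest derivation again uses the Rodrigues formula: set $R_n(t)=\partial_t^{n-1}(t^2-1)^n=(t^2-1)P_n(t)$ and establish a three-term relation among $R_n,R_{n+1},R_{n+2}$, then divide through by $(t^2-1)$. One shows that $\partial_t^{n}(t^2-1)^{n+1}$ can be expanded two ways — either by pulling out one factor $(t^2-1)$ and applying Leibniz, or by writing $(t^2-1)^{n+1}=(t^2-1)(t^2-1)^n$ and differentiating — and comparing these expansions produces the stated coefficients $4n(n+1)(n+2)$, $-2t(n+2)(2n+3)$, $(n+3)$. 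Because the recurrence is a polynomial identity in operators acting on the single seed function $(t^2-1)^n$, its validity is independent of which solution ($P_n$ or $\epsilon_n^{-1}Q_n$) one evaluates it on, provided the $\arctanh$ and $W_n$ pieces of $Q_n$ satisfy the matching index shifts.

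The main obstacle is precisely this last point: verifying that $\tfrac{1}{\epsilon_n}Q_n$ obeys the \emph{same} recurrence with the \emph{same} coefficients. The transcendental part $\epsilon_n P_n(t)\arctanh(1/t)$ is governed by the recurrence for $P_n$ together with the explicit factor $\epsilon_n$, so the normalization by $\epsilon_n^{-1}$ is exactly what is needed to make the $\arctanh$ contributions recombine with the $P_n$-recurrence; one must confirm that $\epsilon_n/\epsilon_{n+1}$ and $\epsilon_n/\epsilon_{n+2}$ produce the correct ratios against the coefficients $4n(n+1)(n+2)$, $-2t(n+2)(2n+3)$, $(n+3)$. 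The rational remainder $W_n(t)/(t^2-1)$ must then independently satisfy the recurrence, which reduces to a contiguity relation among the hypergeometric functions ${}_2F_1$ appearing in $W_{2k}$ and $W_{2k+1}$; here I would invoke standard Gauss contiguous relations rather than expand the series. The arithmetic bookkeeping between the even and odd cases of $W_n$ is the part most likely to be delicate.

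The $n=0$ convention $P_0=t/(t^2-1)$, $Q_0=1/(t^2-1)$ should be checked separately against~\eqref{eq.diffeqPQ} (which it satisfies with $n=0$, giving the trivial Galois group noted above) and against the recurrence as a boundary case linking $n=0,1,2$.
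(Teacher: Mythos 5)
Your route is genuinely different from the paper's, which is entirely algorithmic: the paper rewrites the Rodrigues formula~\eqref{eq.P} as a Cauchy contour integral and obtains both the differential equation and the three-term recurrence for $P_n$ by creative telescoping (the \texttt{HolonomicFunctions} package), then treats $\frac{1}{\epsilon_n}Q_n$ by holonomic closure properties applied to the closed form~\eqref{eq.Q}. Your first half---the differential equation---is correct and in fact more elementary and self-contained than the paper's: differentiating $(t^2-1)u'-2nt\,u=0$ exactly $n$ times gives $(t^2-1)v''-n(n+1)v=0$ for $v=u^{(n-1)}$, and substituting $v=(t^2-1)P_n$ yields~\eqref{eq.diffeqPQ} since $n(n+1)-2=(n-1)(n+2)$; and $Q_n$ is by definition the reduction-of-order partner of $P_n$ (the integrand $(t^2-1)^{-2}P_n^{-2}$ being Wronskian over $P_n^2$), so it solves the same equation, the scalar $\epsilon_n^{-1}$ being irrelevant for a homogeneous linear ODE.

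The recurrence half, however, has genuine gaps. First, the two expansions of $\partial_t^{n}(t^2-1)^{n+1}$ you propose to compare are literally the same expansion---``pulling out one factor $(t^2-1)$ and applying Leibniz'' \emph{is} ``writing $(t^2-1)^{n+1}=(t^2-1)(t^2-1)^n$ and differentiating''---so the comparison yields $0=0$ and no recurrence. A genuinely different second expansion (say, via $\partial_t(t^2-1)^{n+1}=2(n+1)t(t^2-1)^n$ followed by Leibniz on $t(t^2-1)^n$) produces stray terms $\partial_t^{n-2}(t^2-1)^n$ that still must be eliminated; none of this work is present. Second, your principle that the recurrence ``is independent of which solution one evaluates it on'' is false: for instance $(-1)^nP_n$ solves every equation~\eqref{eq.diffeqPQ} but violates the recurrence, because a three-term recurrence is a statement about one specific normalization per index. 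Hence, after the $\arctanh(1/t)$-parts cancel (which your normalization argument does handle correctly), the entire burden falls on showing that the rational parts $W_n/(\epsilon_n(t^2-1))$ satisfy the recurrence, and ``invoke standard Gauss contiguous relations'' is not a proof: consecutive indices mix the two closed forms $W_{2k}$ and $W_{2k+1}$, each a combination of two ${}_2F_1$'s in the argument $t^2$ plus $\arctanh(t)$-weighted terms, so no single standard contiguity applies---this is precisely the step the paper delegates to closure-property algorithms, and it is missing here. A workable repair: once the $\arctanh$-parts cancel, the residual $T_n$ is a polynomial divided by $t^2-1$; since $\epsilon_m^{-1}Q_m=O(t^{-m-2})$ at infinity (the normalization in~\eqref{eq.Q} is the recessive solution), $T_n=O(t^{-n-2})$, so $(t^2-1)T_n$ is a polynomial tending to zero at infinity, hence $T_n\equiv 0$. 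Finally, your proposed $n=0$ boundary check cannot work on the $Q$-side: $\epsilon_0=0$, and indeed $-12t\,\epsilon_1^{-1}Q_1+3\,\epsilon_2^{-1}Q_2=8/(t^2-1)\neq 0$, so the recurrence must be read for $n\geq 1$ only.
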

\begin{proof}
Given the explicit expressions~\eqref{eq.P} and~\eqref{eq.Q} we can
use holonomic closure properties to derive the differential equation
resp. recurrence they satisfy. We first express~\eqref{eq.P} as
\[
  P_n(t)=\frac{(n-1)!}{2\pi i(t^2-1)} \oint\frac{(u^2-1)^n}{(u-t)^n}\, du
\]
by Cauchy's differentiation formula. By the method of creative 
telescoping we obtain the differential equation and the recurrence
(this calculation was carried out by the software package 
\texttt{HolonomicFunctions}~\cite{Koutschan09,Koutschan10b}).
Similarly we can apply holonomic closure properties to the closed
form expression~\eqref{eq.Q}.
\end{proof}

\begin{lem} \label{lem:main1} (proved in \cite{9})
Let $F\in\mathbb{C}(z_1)\left[z_2 \right]$ and
$f(t)=F\big(t,\arctanh\!\big(\frac{1}{t}\big)\big)$.
We consider the field extension
\[  K=\mathbb{C}\left(t,\arctanh\left(\frac{1}{t}\right),\int f\, dt \right) \]
and the monodromy group $G=\sigma(K,\mathbb{C}(t))$.
If $G$ is abelian, then
$$\frac{\partial}{\partial \alpha} \Res_{t=\infty} F\left(t,\arctanh\left(\frac{1}{t}\right)+\alpha\right) =0.$$
\end{lem}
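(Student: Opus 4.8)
The plan is to read the abelianness obstruction off the way the monodromy acts on the iterated integral $y=\int f\,dt$, and to recognise the obstructing constant as a residue at infinity. First I would record the monodromy of the inner function: writing $\arctanh(1/t)=\frac{1}{2}\log\frac{t+1}{t-1}$, it has logarithmic branch points only at $t=\pm1$, around which it is shifted by $\mp\pi i$, and it is single-valued and analytic at $t=\infty$, where it vanishes like $1/t$. Hence every loop $\gamma$ acts by $\arctanh(1/t)\mapsto\arctanh(1/t)+c_\gamma$ with $c_\gamma\in\pi i\,\mathbb{Z}$, and correspondingly $f=F(t,\arctanh(1/t))\mapsto F(t,\arctanh(1/t)+c_\gamma)$.

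Next I would work out the induced action on $y$. Continuing $y(t)=\int_{t_0}^t f\,ds$ once around $\gamma$ and then out to $t$ gives the affine formula
\[ \sigma_\gamma(y)=y+\oint_\gamma f\,ds+\int_{t_0}^t \Delta_{c_\gamma}F\big(s,\arctanh(1/s)\big)\,ds, \]
where $\Delta_cF:=F(\cdot,z_2+c)-F(\cdot,z_2)$ is the finite difference in the second argument; the middle term is a constant (a period) and $\Delta_{c_\gamma}F$ has strictly smaller degree in $z_2$ than $F$. The loop $\gamma_\infty$ at infinity has $c_{\gamma_\infty}=0$, so $\sigma_{\gamma_\infty}$ fixes both $t$ and $\arctanh(1/t)$ and translates $y$ merely by the constant $\oint_{\gamma_\infty}f\,ds=-2\pi i\,\Res_{t=\infty}f$.

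The crux is to compare $\sigma_{\gamma_\infty}$ with the $n$-th power $\sigma_{\gamma_1}^n$ of a loop around $t=1$, which shifts $\arctanh(1/t)$ by $-n\pi i$. Since finite differences commute and applying $\sigma_{\gamma_\infty}$ to a primitive still adds its period along $\gamma_\infty$, a direct computation collapses to
\[ \sigma_{\gamma_\infty}\sigma_{\gamma_1}^n(y)-\sigma_{\gamma_1}^n\sigma_{\gamma_\infty}(y)=\oint_{\gamma_\infty}\Delta_{-n\pi i}F\,ds=-2\pi i\,\Res_{t=\infty}\Delta_{-n\pi i}F, \]
the last step because $\Delta_{-n\pi i}F$ is single-valued and meromorphic near $t=\infty$. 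If $G$ is abelian these operators agree, so $\Res_{t=\infty}\Delta_{-n\pi i}F=0$ for all $n\in\mathbb{Z}$. But $c\mapsto\Res_{t=\infty}\Delta_cF$ is a polynomial in $c$ with no constant term (by $\mathbb{C}$-linearity of the residue and $\Delta_cF=\sum_{m\ge1}\frac{c^m}{m!}\partial_{z_2}^mF$), and a polynomial vanishing at the infinitely many points $c=-n\pi i$ vanishes identically; hence $\Res_{t=\infty}\partial_{z_2}^mF(t,\arctanh(1/t))=0$ for every $m\ge1$, which is precisely $\frac{\partial}{\partial\alpha}\Res_{t=\infty}F(t,\arctanh(1/t)+\alpha)=0$.

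The hard part will be the rigorous justification of the affine action formula for $\sigma_\gamma(y)$ — that continuation genuinely adds a period plus the integral of the finite difference, and that the intermediate primitives remain inside $K$ — together with the degenerate possibility that $\int f$ already lies in $\mathbb{C}(t,\arctanh(1/t))$, in which case $y$ is not a new transcendental and the identification of the commutator with a single additive constant must be argued separately. Once this affine structure of the $G$-action on $y$ is in place, the commutator computation and the passage from $\Res_{t=\infty}\Delta_cF\equiv0$ to the claimed $\alpha$-derivative are routine.
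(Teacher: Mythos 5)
Your proposal is correct and follows essentially the same route as the paper's proof: both compute the commutator of the monodromy around infinity with a monodromy shifting $\arctanh\left(\frac{1}{t}\right)$, identify its action on $\int f\,dt$ as a residue at infinity of a finite difference of $F$, and use polynomiality in the shift parameter to pass from the integer shifts to the vanishing of the $\alpha$-derivative. The only cosmetic differences are that the paper uses the figure-eight path around $\pm 1$ (shifts by multiples of $2i\pi$) instead of your single loop around $t=1$, and reads the commutator's contribution off the coefficient of $\ln t$ in the expansion at infinity rather than off the period $\oint_{\gamma_\infty}$.
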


\begin{proof} We will consider two paths, the ``eight'' path $\sigma_1$ around the singularities $-1$ and $1$, and the path~$\sigma_2$ around infinity. At infinity, the function $F\big(t,\arctanh\!\big(\frac{1}{t}\big)+\alpha\big)$ will have a series expansion of the kind
$$\int F\left(t,\arctanh\left(\frac{1}{t}\right)+\alpha\right) dt=\sum\limits_{n=n_0}^{\infty} a_n(\alpha)t^n+r(\alpha)\ln t$$
because the function $\arctanh\!\big(\frac{1}{t}\big)$ has a regular point at infinity. Let us now consider the monodromy commutator
$$\sigma=\sigma_2^{-1}\sigma_1^{-\frac{\beta}{2i\pi}}\sigma_2\sigma_1^{\frac{\beta}{2i\pi}} \qquad \beta\in 2i\pi \mathbb{Z}.$$
We have that $\sigma_1^{\frac{\beta}{2i\pi}}(f)=F\big(t,\arctanh\!\big(\frac{1}{t}\big)+\beta\big)$ and $\sigma_2(\ln t)=\ln t +2i\pi$. We deduce that
$$\sigma(f)=f+r(\beta)-r(0).$$
This $r(\beta)$ corresponds to the residue of
$F\big(t,\arctanh\!\big(\frac{1}{t}\big)+\beta\big)$ at
infinity. If the monodromy is commutative, then the commutator
$\sigma$ should act trivially on~$f$. This is the case only if
$r(\beta)-r(0)=0$ for all $\beta\in 2i\pi\mathbb{Z}$. The function $r$ is a
polynomial in $\beta$, so $r(\beta)-r(0)=0$ for all
$\beta\in\mathbb{C}$.  From this the claim follows.
\end{proof}

In the following, we will also need to use the next lemma which
is a kind of reciprocal version of Lemma~\ref{lem:main1}.

\begin{lem}\label{lem:main1reciproque} (proved in \cite{9})
We consider
$$F(t)=\sum\limits_{i=0}^3 H_i(t)  \arctanh\left(\frac{1}{t}\right)^i$$
with $H_0,\dots,H_3\in\mathbb{C}[t]$. If the conditions of Lemma~\ref{lem:main1} are satisfied, then
\begin{itemize}
 \item If $\Res_{t=\infty} F(t) =0$, then $\int F\,dt \in\mathbb{C}\left[t,\arctanh\left(\frac{1}{t}\right) \right]$
 \item If $\Res_{t=\infty} F(t) \neq 0$, then $\int F\,dt \in\mathbb{C}\left[t,\arctanh\left(\frac{1}{t}\right),\ln\left( t^2-1 \right) \right]$
\end{itemize}
\end{lem}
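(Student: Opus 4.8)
The plan is to first bring $\int F\,dt$ into a transparent normal form by integration by parts, and then to use the hypothesis that the monodromy group $G$ is abelian as a device for annihilating the genuinely multivalued (polylogarithmic) pieces that survive. Throughout write $L=\arctanh(1/t)$, so that $L'=-1/(t^2-1)$, and $M=\ln(t^2-1)$, so that $M'=2t/(t^2-1)$. Integrating by parts repeatedly trades each factor $dt$ for a factor $L'$ while lowering the power of $L$; a partial-fraction decomposition of the rational coefficient that appears then isolates its poles at $t=\pm1$. Every ``polynomial times $L^k$'' contribution integrates back into $\mathbb{C}[t,L]$, so the only terms not manifestly in $\mathbb{C}[t,L]$ are constant multiples, with coefficients $a_j,b_j$ depending $\mathbb{C}$-linearly on the $H_i$, of the iterated integrals $\int L^j/(t-1)\,dt$ and $\int L^j/(t+1)\,dt$ for $j=0,1,2$. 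This gives
\[
  \int F\,dt=p(t,L)+\sum_{j=0}^2\Bigl(a_j\!\int\frac{L^j}{t-1}\,dt+b_j\!\int\frac{L^j}{t+1}\,dt\Bigr),\qquad p\in\mathbb{C}[t,L].
\]

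Next I would use the elementary identities $1/(t+1)-1/(t-1)=2L'$ and $1/(t+1)+1/(t-1)=M'$. The first yields $\int L/(t+1)\,dt-\int L/(t-1)\,dt=L^2$ and $\int L^2/(t+1)\,dt-\int L^2/(t-1)\,dt=\tfrac23L^3$, both in $\mathbb{C}[t,L]$, while $\int dt/(t\pm1)=\tfrac12M\pm L$. Substituting these, the only potentially non-elementary survivors are the symmetric combinations $\Sigma_j=\int L^j/(t-1)\,dt+\int L^j/(t+1)\,dt$ for $j=1,2$, and
\[
  \int F\,dt=\tilde p(t,L)+\tfrac12(a_0+b_0)M+\tfrac12(a_1+b_1)\Sigma_1+\tfrac12(a_2+b_2)\Sigma_2,\qquad \tilde p\in\mathbb{C}[t,L].
\]

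The monodromy argument is where abelianness enters. Let $\gamma_{\pm1}$ be small loops around $t=\pm1$; then $\gamma_1(L)=L-\pi i$, $\gamma_{-1}(L)=L+\pi i$, and $\gamma_{\pm1}(M)=M+2\pi i$. Since $\gamma_1,\gamma_{-1}$ act on $\mathbb{C}[t,L]$ and on $M$ by mutually commuting translations, the group commutator $\gamma_1\gamma_{-1}\gamma_1^{-1}\gamma_{-1}^{-1}$ is trivial there. A direct continuation computation gives $\gamma_1\gamma_{-1}(\Sigma_1)-\gamma_{-1}\gamma_1(\Sigma_1)=-4\pi^2$, a nonzero constant, whereas the analogous computation for the weight-three term produces a nonzero function of weight one. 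Because the commutator strictly lowers the polylogarithmic weight, its values on $\Sigma_2$ and on $\Sigma_1$ lie in different graded pieces and cannot cancel; imposing that the commutator annihilate $\int F\,dt$ therefore forces $a_2+b_2=0$ first and then $a_1+b_1=0$. Hence $\int F\,dt=\tilde p(t,L)+\tfrac12(a_0+b_0)M\in\mathbb{C}[t,L,M]$.

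Finally, since $L\to0$ and $M\sim2\ln t$ as $t\to\infty$ while $\tilde p$ is polynomial in $t$ and $L$, the sole logarithmic term of $\int F\,dt$ at infinity is $\tfrac12(a_0+b_0)M$, and its coefficient equals, up to a nonzero constant, $\Res_{t=\infty}F$. Thus if $\Res_{t=\infty}F=0$ the term $M$ disappears and $\int F\,dt\in\mathbb{C}[t,L]$, while if $\Res_{t=\infty}F\neq0$ it genuinely appears and $\int F\,dt\in\mathbb{C}[t,L,M]$, which is the claimed dichotomy. I expect the main obstacle to be the explicit commutator computation for the weight-three survivor $\Sigma_2$: one must check that $\gamma_1\gamma_{-1}(\Sigma_2)-\gamma_{-1}\gamma_1(\Sigma_2)$ is a nonzero function of weight exactly one, linearly independent from the constant obtained for $\Sigma_1$, so that the graded argument separates the two vanishing conditions cleanly.
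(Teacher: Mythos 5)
Your reduction and your closing step are sound, but the monodromy step in the middle has a genuine gap, and it is exactly at the point you yourself flagged as the main obstacle. (Note also that the paper contains no proof of this lemma to compare with---it is quoted from \cite{9}---so your argument has to stand entirely on its own.) The correct parts: integration by parts plus partial fractions does give
\begin{equation*}
\int F\,dt=\tilde p(t,L)+\alpha_0 M+\alpha_1\Sigma_1+\alpha_2\Sigma_2,\qquad
\Sigma_j=\int L^j\,\frac{2t}{t^2-1}\,dt,\quad \tilde p\in\mathbb{C}[t,L],
\end{equation*}
and, once $\alpha_1=\alpha_2=0$ is known, the identification $\alpha_0=-\tfrac12\Res_{t=\infty}F$ gives the stated dichotomy. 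The gap: you claim that the commutator applied to $\Sigma_2$ ``produces a nonzero function of weight one'', so that its contribution cannot cancel against the constant $-4\pi^2$ coming from $\Sigma_1$. This is false. Write the action of any loop as $\gamma(M)=M+m_\gamma$, $\gamma(\Sigma_1)=\Sigma_1+c_\gamma M+e_\gamma$, $\gamma(\Sigma_2)=\Sigma_2+2c_\gamma\Sigma_1+c_\gamma^2 M+d_\gamma$ with \emph{constants} $c_\gamma,m_\gamma,e_\gamma,d_\gamma$. Composing in either order, the coefficient of $\Sigma_1$ in the image of $\Sigma_2$ is $2(c_\gamma+c_{\gamma'})$ and that of $M$ is $(c_\gamma+c_{\gamma'})^2$; both are symmetric in the two loops, hence cancel in the difference, leaving
\begin{equation*}
\gamma\gamma'(\Sigma_2)-\gamma'\gamma(\Sigma_2)
=2\bigl(c_{\gamma'}e_\gamma-c_\gamma e_{\gamma'}\bigr)+c_{\gamma'}^2m_\gamma-c_\gamma^2 m_{\gamma'},
\end{equation*}
a \emph{constant}, in the same graded piece as the $\pm4\pi^2$ produced by $\Sigma_1$. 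Consequently a single commutator imposes only one linear relation $\pm4\pi^2\alpha_1+\kappa\,\alpha_2=0$, which does not force $\alpha_1=\alpha_2=0$; your graded separation argument collapses here.

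The conclusion can still be salvaged with your normal form, but you must exploit abelianness for more than one pair of loops. For example, take $\gamma=\gamma_1^n$, $\gamma'=\gamma_{-1}$: one checks $c_{\gamma_1^n}=nc_1$, $m_{\gamma_1^n}=2\pi i\,n$, $e_{\gamma_1^n}=n e_1+\pi i\,c_1 n(n-1)$, and the requirement that the two compositions agree on $\int F\,dt$ becomes (up to orientation signs)
\begin{equation*}
4\pi^2 n\,\alpha_1-\Bigl(2\pi i\,n\,(e_1+e_{-1})+4\pi^3 i\,n(n-1)\Bigr)\alpha_2=0
\qquad\text{for all }n\in\mathbb{N}.
\end{equation*}
The coefficient of $\alpha_2$ is genuinely quadratic in $n$ with nonzero leading coefficient $4\pi^3 i$, while that of $\alpha_1$ is linear, so letting $n$ vary forces $\alpha_2=0$ and then $\alpha_1=0$. (Equivalently, one can invoke the parametric residue condition of Lemma~\ref{lem:main1}, whose derivative in $\alpha$ packages this whole family of commutator conditions at once.) With that repair, the remainder of your proof goes through.
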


\begin{thm} \label{thm:second1}
Let $V$ be a homogeneous potential of degree $-1$ in the plane. We
suppose that $c=(1,0)$ is a Darboux point of $V$ with
multiplicator~$-1$. If the variational equation is integrable at
order~$2$ then
\[
  \Sp\left(\nabla^2V(c)\right)= \left\{2, \textstyle{\frac{1}{2}}(p-1)(p+2) \right\},\quad 
  p\in\mathbb{N},
\]
and for odd $p$ we have $\frac{\partial^3 V}{\partial q_2^3} =0$.
\end{thm}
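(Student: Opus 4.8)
The first assertion requires no new work. Integrability at order~$2$ entails integrability at order~$1$, so Theorem~\ref{thm:Morales} applies, and Lemma~\ref{thm:lem1} shows that one eigenvalue is always~$2$; hence $\Sp(\nabla^2 V(c)) = \{2,\frac12(p-1)(p+2)\}$ for some $p\in\mathbb{N}$. The substance of the statement is the vanishing of $\partial^3 V/\partial q_2^3$ when $p$ is odd, and the plan is to read this off from the second-order variational equation in the transversal direction.

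First I would diagonalize $\nabla^2 V(c)$ so that $q_1$ is the radial direction (eigenvalue~$2$, i.e.\ parameter $n=2$) and $q_2$ the transversal one (eigenvalue $\lambda=\frac12(p-1)(p+2)$, i.e.\ parameter $n=p$). Writing $\partial^3_{ijk}V$ for $\partial^3 V/(\partial q_i\partial q_j\partial q_k)$, the order-$2$ equation for the transversal component has an inhomogeneity quadratic in the first-order solutions $x_1\in\mathrm{span}(P_2,Q_2)$ and $x_2\in\mathrm{span}(P_p,Q_p)$ and weighted by the $\partial^3_{2jk}V(c)$. Homogeneity of degree~$-1$, through Euler's relation applied to $\nabla^2 V$ (which gives $\partial^3_{1jk}V(c)=-3\,\partial^2_{jk}V(c)$ at $c=(1,0)$), kills $\partial^3_{112}V(c)$ and fixes $\partial^3_{122}V(c)=-3\lambda$, leaving $\partial^3 V/\partial q_2^3$ as the only free coefficient. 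After the substitution $\phi=(s^2-1)^{-1}$ that turns~\eqref{eq1} into~\eqref{eq.diffeqPQ} — under which the operator only rescales by the constant~$2$ — the equation becomes
\[
  \tilde L_p\big[x_2^{(2)}\big] = (s^2-1)\Big(\!-6\lambda\,x_1x_2 + \tfrac{\partial^3 V}{\partial q_2^3}\,x_2^2\Big),
\]
where $\tilde L_p$ is the operator of~\eqref{eq.diffeqPQ} with $n=p$.

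Next I would solve this by variation of parameters. Since the Wronskian of $(P_p,Q_p)$ is a constant multiple of $(s^2-1)^{-2}$, the two integrands produced are constant multiples of $(s^2-1)^2P_p(\cdots)$ and $(s^2-1)^2Q_p(\cdots)$. Integrability at order~$2$ forces the identity component of the Galois group to be abelian, and Lemmas~\ref{lem:main1} and~\ref{lem:main1reciproque} convert this into the residue condition $\partial_\alpha\Res_{s=\infty}F(s,\arctanh(\frac1s)+\alpha)=0$ for the integrand $F$ whose antiderivative appears multiplied by~$Q_p$. The engine of the argument is a parity count in~$s$: because $s\mapsto-s$ fixes $\phi=(s^2-1)^{-1}$, and because $P_n$ is even exactly when $n$ is odd while $Q_n$ is odd exactly when $n$ is odd, for odd~$p$ every cross-term contribution (the part $\propto\lambda$) collapses — after $\partial_\alpha$ — either to the residue of a genuine polynomial, which vanishes, or to an even integrand, which carries no $s^{-1}$ term. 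Only the $\partial^3 V/\partial q_2^3$ term survives, yielding
\[
  \partial_\alpha\Res_{s=\infty}F \;=\; \mathrm{const}\cdot\epsilon_p\,\tfrac{\partial^3 V}{\partial q_2^3}\,\Res_{s=\infty}\!\big[(s^2-1)^2P_p(s)^3\arctanh(\tfrac1s)\big].
\]

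The whole statement for odd~$p$ therefore reduces to the nonvanishing of this last residue, and that is the step I expect to be the main obstacle. Parity alone only guarantees that the integrand has the right symmetry to admit a nonzero residue; establishing that it is genuinely nonzero for every odd~$p$ requires an explicit computation — a direct Laurent expansion at $s=\infty$ settles small cases (the $s^{-1}$-coefficient is $\frac{8}{15}$ for $p=1$), while uniformity in~$p$ is naturally handled by the holonomic/P-finite methods developed later in the paper. Granting nonvanishing, abelianity forces $\partial^3 V/\partial q_2^3=0$. As a built-in check, for even~$p$ the same integrand becomes even and its residue vanishes identically, which is precisely why no order-$2$ condition is recorded there and the obstruction migrates to order~$3$ (Theorem~\ref{thm:main1}). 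Finally I would confirm that the radial ($n=2$) second-order equation adds nothing: its source involves only the already-determined derivatives $\partial^3_{111}V(c)=-6$ and $\partial^3_{122}V(c)=-3\lambda$, and the same polynomial/parity bookkeeping makes all the relevant residues vanish.
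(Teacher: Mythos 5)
The first thing to note is that the paper contains no internal proof of this statement to compare against: Theorem~\ref{thm:second1} is dispatched in one sentence as a special case of Theorem~2 of \cite{9}. Judged as a standalone proof, your proposal has the right skeleton, and it is exactly the mechanism the paper itself deploys at order~$3$: the spectrum claim does reduce to order~$1$ via Theorem~\ref{thm:Morales} and Lemma~\ref{thm:lem1}; the Euler relations do fix all third derivatives except $\partial^3 V/\partial q_2^3$; and variation of parameters (with Wronskian $(s^2-1)^{-2}$) followed by the residue criterion of Lemma~\ref{lem:main1} is the correct way to extract the obstruction. But the proposal fails at the two places where the actual mathematical content lies.

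First, the conclusion $\partial^3 V/\partial q_2^3=0$ for odd $p$ rests entirely on the nonvanishing, for \emph{every} odd $p$, of the residue you exhibit, and you explicitly concede this point (``granting nonvanishing''). This is not a detail that can be deferred: it \emph{is} the theorem. It plays the same role as the nonvanishing of $f_1,f_2,f_3$ at order~$3$, for which the paper needs the P-finite recurrences of Lemmas~\ref{lem.f3} and~\ref{lem.f1} together with the controlled asymptotics of Theorem~\ref{thm:main3}; a Laurent expansion at $p=1$ plus an appeal to holonomic methods is a plan, not a proof. Second, the parity argument meant to kill the cross terms proportional to $\lambda$ does not work. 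For odd $p$ one has $P_p$, $Q_2$ even and $Q_p$, $P_2$ odd, so the $\alpha$-linear parts of the substituted integrands contain terms such as $(s^2-1)^{3}P_2Q_p^2$ and $(s^2-1)^{3}Q_2Q_pP_p$, which are \emph{odd} functions of $s$ --- precisely the parity that can carry a nonzero coefficient of $s^{-1}$. Parity therefore proves nothing about these terms. Their residues must in fact vanish (otherwise order-$2$ integrability would constrain $\lambda$ itself, contrary to the shape of the theorem and to the remark following it, which says even $p$ yields no constraint at all), but establishing this cancellation requires genuine work --- integration by parts, the Wronskian identity $P_n\dot Q_n-\dot P_nQ_n=(s^2-1)^{-2}$, and Lemma~\ref{lem:main1reciproque}, as in the paper's order-$3$ proof --- none of which is carried out. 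Your parity check is sound exactly where it is easy, for even $p$ (where the $b$-integrand becomes even and its residue vanishes, explaining the absence of a constraint); for odd $p$ it is invoked exactly where it has no force.
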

This theorem is in fact a particular case of Theorem $2$ in \cite{9}
for which the three indices $i,j,k$ are equal.

\begin{rem}
Because the constraint appears only for odd $p$, the variational equations of order~$2$ give no constraint for even~$p$. Hence this is not sufficient for proving non-integrability for an unbounded manifold.
\end{rem}

\section{Proof of Theorem~\ref{thm:main1}}\label{sec}
\begin{proof}
The variational equation at order $3$ is given by
\begin{align*}
\ddot{X}_1 & =\frac{2}{\phi^3} X_1 + \frac{1}{2}\frac{a}{\phi^4} Y_{1,1}-\frac{4b}{3\phi^5}Z^3\\
\ddot{X}_2 & =\frac{\lambda}{\phi^3} X_2 +\frac{a}{\phi^4} Y_{2,1}+ \frac{b}{\phi^4} Y_{1,1}+\frac{c}{\phi^5} Z^3\\
\dot{Y}_{1,1} & =2Y_{1,2}\\
\dot{Y}_{1,2} & =\frac{\lambda}{\phi^3}Y_{1,1}+\frac{b}{\phi^4} Z^3+Y_{1,3}\\
\dot{Y}_{1,3} & =\frac{\lambda}{\phi^3}Y_{1,2}+\frac{b}{\phi^4} Z^2\dot{Z} \\
\dot{Y}_{2,1} & =Y_{2,2}+Y_{2,3}\\
\dot{Y}_{2,2} & =\frac{2}{\phi^3}Y_{2,1}-\frac{4b}{3\phi^5}Z^3+Y_{2,4}\\
\dot{Y}_{2,3} & =\frac{\lambda}{\phi^3}Y_{2,1}+Y_{2,4}\\
\dot{Y}_{2,4} & =\frac{2}{\phi^3}Y_{2,3}-\frac{4b}{3\phi^5}Z^2\dot{Z}+\frac{\lambda}{\phi^3}Y_{2,2}\\
\ddot{Z} & =\frac{2}{\phi^3} Z
\end{align*}
where $\lambda=\frac{1}{2}(n-1)(n+2)$. The coefficients $a,b,c$ correspond to the following derivatives
$$a=\frac{\partial^3}{\partial q_1 \partial q_2^2} V(c), \quad b=\frac{1}{2} \frac{\partial^3}{\partial q_2^3} V(c), \quad c=\frac{1}{6}\frac{\partial^4}{\partial q_2^4} V(c),$$
and the others are given using the Euler relation for homogeneous functions. A complete procedure to build these equations is given by~\cite{7}. The functions $Y_{1,1}$ and $Y_{2,1}$ are solutions of a system of linear differential equations with an inhomogeneous term, and the homogeneous part is in fact a symmetric product of the first order variational equation. Here, we already put to zero terms that we think in advance they will not produce integrability constraints. As before, we use the change of variables $\phi(t)\longrightarrow (t^2-1)^{-1}$.

We choose $Z(t)=Q_n$ and compute the solution for $X_2$ of the above system. We first remark that $X_2$ is in the Picard-Vessiot field, so it is also the case for its derivative. We now perform integration by parts and see that one term is already in the Picard-Vessiot field, and the other is
\begin{equation}\label{eq.intPV}
  \int 2(t^2-1)^2\left(a^2tP_nQ_nI_1 + 4b^2P_n^2Q_nI_2 + c(t^2-1)Q_n^4 \right) dt
\end{equation}
where
\begin{align*}
I_1 & = \int\left(
  \frac{\int\left(\frac{t(t^2-1)^2Q_n^3}{P_n}+\frac{I_3}{(t^2-1)^2P_n^2}\right)dt}{t^2(t^2-1)^2} +
  \frac{\int\frac{I_3}{t^2(t^2-1)^2}dt}{(t^2-1)^2P_n^2}\right) dt\\
I_2 & = \int\frac{\int\left(
    (t^2-1)^2Q_n^3 + 
    \frac{2}{(t^2-1)^2 P_n^2}\int (t^2-1)^4P_nQ_n^2(P_n\dot{Q}_n-Q_n\dot{P}_n)dt
  \right)dt}{(t^2-1)^2P_n^2}dt\\
I_3 & = \int t(t^2-1)^4Q_n^2\left(P_n\dot{Q}_n-Q_n\dot{P}_n\right)dt
\end{align*}

Let us now study this expression term by term. We begin with the 
third summand of~\eqref{eq.intPV} which is
$$2c\int \big(t^2-1\big)^3Q_n^4\,dt.$$
It has already the form of Lemma~\ref{lem:main1}. So as in the proof of Lemma~\ref{lem:main1}, the monodromy commutator will be computed using
$$\Res_{t=\infty} (t^2-1)^3 (Q_n+\epsilon_n\alpha P_n)^4.$$
Now look at the term in $b^2$. It is not as complicated as we could think because of the following relation
$$P_n\dot{Q}_n-\dot{P_n} Q_n=(t^2-1)^{-2}\quad \forall n\in\mathbb{N}$$
which is linked to the Wronskian of Equation~\eqref{eq.diffeqPQ}.
Thanks to that, the term in $b^2$ can be written as
$$8b^2\int P_n^2 Q_n (t^2-1)^2 \int \frac{\int (t^2-1)^2 Q_n^3+2 {\frac {\int P_nQ_n^2(t^2-1)^2 dt}{(t^2-1)^2P_n^2}}dt}{(t^2-1)^2P_n^2} dt\,dt$$
and then using integration by parts, this gives
$$16b^2\int Q_n^3 (t^2-1)^2\int P_nQ_n^2(t^2-1)^2 dt\,dt-8b^2\int P_n Q_n^2 (t^2-1)^2 dt \int Q_n^3(t^2-1)^2 dt$$ %I propose to just keep this final line of the computation
Now by Lemma~\ref{lem:main1reciproque} we have for all 
even integers~$n>1$:
\[
  \int P_nQ_n^2(t^2-1)^2 dt,\;\int Q_n^3 (t^2-1)^2 dt 
  \in\C(t)\left[ \arctanh\left(\frac{1}{t}\right) \right].
\]
So we are integrating a polynomial in $\arctanh$ with rational coefficients, and this corresponds to the hypotheses of Lemma~\ref{lem:main1}. The second term does not provide any monodromy, so we only have to study the first term and thus the sequence
$$\Res_{t=\infty} (Q_n+\epsilon_n\alpha P_n)^3 (t^2-1)^2\int P_n(Q_n+\epsilon_n\alpha P_n)^2(t^2-1)^2.$$
Now we look at the term in $a^2$. It can be simplified to
$$\int 2a^2 (t^2-1)^2P_n Q_n t \int \frac{\int \!\frac {(t^2-1)^2 Q_n^3 t} {P_n}+{\frac{\int \! (t^2-1)^2 Q_n^2 t\,dt}{(t^2-1)^2 P_n^2}}{dt}} {t^2(t^2-1)^2}+\frac{\int \!\frac {\int \! (t^2-1)^2 Q_n^2t\,dt} {t^2(t^2-1)^2}dt}{(t^2-1)^2P_n^2}dt$$
We now use again integrations by parts (recall that $P_2=4t$):
$$8a^2\int \!(t^2-1)^2 Q_n^2 Q_2\int \! (t^2-1)^2 Q_n^2 t\,dt-8a^2\int\! (t^2-1)^2Q_n^2 t \int \!(t^2-1)^2 Q_n^2Q_2 dt.$$ %I propose to just keep this final line of the computation
To conclude we can again use Lemmas~\ref{lem:main1} and~\ref{lem:main1reciproque}. We first prove that
$$\forall n\neq 1\;\;\int P_2Q_n^2(t^2-1)^2 dt,\;\int Q_n^2 Q_2 (t^2-1)^2 dt \in\C(t)\left[ \arctanh\left(\frac{1}{t}\right) \right]. $$ %It is necessary to compute the residues of Lemma~\ref{lem:main1} to prove this
The case $n=1$ corresponds to $\lambda=0$, for which we have always the coefficient $a=0$. Now we make a final integration by parts which gives
$$16a^2\int \!(t^2-1)^2 Q_n^2 Q_2\int \! (t^2-1)^2 Q_n^2 t\,dt\,dt-8a^2\int\! (t^2-1)^2Q_n^2 t\,dt \int \!(t^2-1)^2 Q_n^2Q_2 dt.$$
Thanks to that, we get a constraint of the form given by
Theorem~\ref{thm:main1} and the coefficients are given by (multiplying them by $\epsilon_n^{-2}$ for further simplifications)
\begin{align}
\label{eq.f1}
f_1(n) & = \langle\alpha^3\rangle\, 2\epsilon_n^{-2}\Res_{t=\infty}\bigg(
    (t^2-1)^2 (Q_n+\epsilon_n\alpha P_n)^2 (Q_2+\epsilon_2\alpha P_2)\\
  &\qquad \times\int(t^2-1)^2(Q_n+\epsilon_n\alpha P_n)^2P_2\,dt\bigg),\nonumber\\
\label{eq.f2}
f_2(n) & = \langle\alpha^3\rangle\, 2\epsilon_n^{-2}\Res_{t=\infty} \bigg( 
    (t^2-1)^2(Q_n+\epsilon_n\alpha P_n)^3\\
  &\qquad \times\int (t^2-1)^2(Q_n+\epsilon_n\alpha P_n)^2P_n\,dt\bigg),\nonumber\\
\label{eq.f3}
f_3(n) & = \langle\alpha^3\rangle\, \frac{1}{6}\epsilon_n^{-2}\Res_{t=\infty}\left( 
  (t^2-1)^3 (Q_n+\epsilon_n\alpha P_n)^4\right),
\end{align}
where $\langle\cdot\rangle$ denotes coefficient extraction. In fact,
only the coefficient of~$\alpha^3$ appears in these residues. We need
not to prove this fact, because we simply select the coefficient
of~$\alpha^3$, ignoring the question whether the other coefficients
are zero or not.

We now look at the case $n=0$. All our previous calculations are also valid in this case except those involving Lemma~\ref{lem:main1reciproque} because we only have
$$\int P_0Q_0^2(t^2-1)^2 dt,\;\int Q_0^3 (t^2-1)^2 dt \in\C(t)\left[ \arctanh\left(\frac{1}{t}\right),\ln(t^2-1) \right], $$
$$\int P_2Q_0^2(t^2-1)^2 dt,\;\int Q_0^2 Q_2 (t^2-1)^2 dt \in\C(t)\left[ \arctanh\left(\frac{1}{t}\right) \right]. $$
So, the coefficients in $a^2,c$ are also
\begin{align*}
2\Res_{t=\infty} \left( (t^2-1)^2 Q_0^2 Q_2\int (t^2-1)^2P_2Q_0^2\,dt\right),\\
\frac{1}{6}\Res_{t=\infty} \left( (t^2-1)^3 Q_0^4\right).
\end{align*}
We find that these residues are both~$0$, and so the corresponding integral does not provide any additional monodromy. The case of the coefficient in $b^2$ is a little more difficult because the integral does not satisfy the conditions of Lemma~\ref{lem:main1}. After an explicit computation, we arrive at the following integral
\begin{align*}
\int \frac{1}{t^2-1}\left(-t \arctanh\left(\frac{1}{t}\right)-\frac{1}{2} \ln\left(t^2-1\right) \right)dt=\\
\frac{1}{2}\,\ln  \left( 2 \right) \ln  \left( t-1 \right) +\frac{1}{2}\,\dilog \left( t+1 \right)+\\
\frac{1}{8} \ln  \left( t+1 \right)^2+ \frac{1}{4}\ln  \left( t+1 \right)\ln  \left( t-1 \right)-\frac{1}{8} \ln  \left( t-1 \right)^2.
\end{align*}
All the terms are in $\mathbb{C}[t,\arctanh\left( \frac{1}{t} \right),\ln\left( t^2-1 \right) ]$ except one, namely the dilogarithmic term
$$\dilog\left( t+1 \right)=\int \frac{\ln(t+1)}{t}dt.$$
With the same idea as in Lemma~\ref{lem:main1}, we see that this term has a noncommutative monodromy because of the following residue in $0$
$$\Res_{t=0} \frac{\ln(t+1)+\alpha}{t}=\alpha$$
which depends explicitly on~$\alpha$. So, for $n=0$, the integrability condition at order~$3$ is in fact just $b^2=0$.

\end{proof}

% \begin{center}
% List of the $3$th order integrability constraint for the first values of $n$
% \end{center}
% $$n=1 \quad -\frac{16}{315}a^2+\frac{8}{35}c=0$$
% $$n=2 \quad -\frac{128}{3465}a^2-\frac{512}{3465}b^2+\frac{128}{385}c=0$$
% $$n=3 \quad -\frac{256}{715}a^2+\frac{13824}{5005}c=0$$
% $$n=4 \quad -\frac{475136}{57057}a^2-\frac{753664}{101745}b^2+\frac{19759104}{323323}c=0$$
% $$n=5 \quad -\frac{2755788800}{7436429}a^2+\frac{19729612800}{7436429}c=0$$
% $$n=6 \quad -\frac{28915531776}{1062347}a^2-\frac{180759822336}{16900975}b^2+\frac{204013043712}{1062347}c=0$$

\section{Holonomicity and Asymptotics}

In this section we are going to derive P-finite recurrences (i.e.,
linear recurrences with polynomial coefficients) for the sequences
$f_1(n)$, $f_2(n)$, and $f_3(n)$ that appeared in section \ref{sec}. The methods that we employ are based on Zeilberger's
holonomic systems approach~\cite{Zeilberger90}. The recurrences
presented below were computed with the method of creative telescoping,
to which a brief introduction is given below (see~\cite{Koutschan09}
for more details).

Let $S_n$ denote the forward shift operator in~$n$, i.e.,
$S_nf(n)=f(n+1)$, and $D_x$ the derivative w.r.t.~$x$, i.e.,
$D_xf(x)=f'(x)$. The method works for the class of holonomic
functions, which in short are (multivariate) functions that are
solutions of maximally overdetermined systems of linear difference and
differential equations with polynomial coefficients. The set of all
equations which a given holonomic function satisfies forms a left ideal
(we call it \emph{annihilating ideal}) in some Ore algebra of the form
\[
  \mathbb{C}(m,n,\dots,x,y,\dots)\langle S_m,S_n,\dots,D_x,D_y\dots\rangle .
\]
The nice fact about holonomic functions is that this class is closed
under certain operations (addition, multiplication, certain
substitutions, definite summation and integration) which can be
executed algorithmically: given the defining systems of equations for
two holonomic functions $f$ and $g$, there are algorithms to compute
a holonomic system for $f+g$, $f\cdot g$, etc.

For computing integrals (or residues), the method of creative
telescoping makes use of the fundamental theorem of calculus. Consider
a definite integral of the form $\int_a^b f\,dx$ where the
integrand~$f$ depends also on some other (discrete and/or continuous)
parameters. We need $f$ to be holonomic, i.e., there is some left
ideal~$I$ of annihilating operators in the corresponding Ore
algebra~$\mathbb{O}$. The idea is now to come up with an operator
$A+D_xB\in I$ where $A,B\in\mathbb{O}$ and~$A$ does not depend on $x$
and $D_x$ (the concept of Gr\"obner bases~\cite{Buchberger65} plays a
crucial r\^{o}le in this step). Then after integration we get,
\[
  P\int_a^b \!f\,dx + \Big[Qf\Big]_a^b = 0,
\]
in other words, we found a (possibly inhomogeneous) equation for the
integral in question. The examples below will demonstrate this
methodology clearly; we start with the simplest one, the
sequence~$f_3(n)$.

\begin{lem}\label{lem.f3}
The sequence $f_3(n)$ given 
in~\eqref{eq.f3}, satisfies the P-finite recurrence
\begin{align*}
& (4n+11)(4n+9)(n+1)^3(n+3)^2f_3(n+2)-{}\\
& (2n+3)(16n^6+144n^5+515n^4+930n^3+888n^2+423n+81)f_3(n+1)+{}\\
& (4n+3)(4n+1)(n+2)^3n^2f_3(n)=0.
\end{align*}
subject to the initial conditions
\[
  f_3(1) = -\frac{8}{105},\quad f_3(2) = -\frac{8}{385}.
\]
\end{lem}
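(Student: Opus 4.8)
The plan is to prove Lemma~\ref{lem.f3} by the creative telescoping machinery already set up in this section, applied to the residue representation~\eqref{eq.f3}. The key observation is that $f_3(n)$ is, up to the factor $\tfrac16\epsilon_n^{-2}$, the coefficient of $\alpha^3$ in a residue at infinity of a function built from $P_n$, $Q_n$, and $\alpha$. Since the coefficient of $\alpha^3$ in $(Q_n+\epsilon_n\alpha P_n)^4$ is $4\epsilon_n^3 Q_n P_n^3$, we have $f_3(n)=\tfrac{2}{3}\epsilon_n\,\Res_{t=\infty}\big((t^2-1)^3 Q_n P_n^3\big)$. The residue can be rewritten as a contour integral, so that $f_3(n)$ becomes a definite integral whose integrand depends on the discrete parameter~$n$. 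The first step is therefore to produce an explicit holonomic representation of this integrand: both $P_n$ and $Q_n$ satisfy the differential equation~\eqref{eq.diffeqPQ} and the three-term recurrence from the lemma preceding this section, so by the holonomic closure properties (product, multiplication by the polynomial $(t^2-1)^3$, and the known annihilator of $\epsilon_n$) I can assemble an annihilating left ideal for $(t^2-1)^3\epsilon_n Q_n P_n^3$ in the Ore algebra $\mathbb{C}(n,t)\langle S_n,D_t\rangle$.

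Next I would run creative telescoping on this annihilating ideal with respect to the variable of integration. Concretely, the goal is to find an operator of the form $A(n,S_n)+D_t\,B(n,t,S_n,D_t)$ lying in the annihilating ideal, where the telescoper~$A$ is free of $t$ and $D_t$. Applying such an operator under the residue/contour integral and using that the integral of a total derivative $D_t B$ of a function single-valued near infinity contributes nothing, the certificate part drops out and one is left with $A(n,S_n)f_3(n)=0$, i.e. precisely a P-finite recurrence for $f_3(n)$. I expect the telescoper returned by the algorithm to be the third-order-in-$t$, second-order-in-$S_n$ operator whose coefficients are the polynomials displayed in the lemma; the statement of the recurrence is then read off directly. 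The residue is taken over a small loop at infinity where $Q_n$ has a convergent expansion (its branch-point singularities at $\pm1$ lie inside, not on, the contour), so the boundary term vanishes and the homogeneous recurrence is justified.

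Finally, the initial conditions $f_3(1)=-\tfrac{8}{105}$ and $f_3(2)=-\tfrac{8}{385}$ are obtained by direct evaluation. For small $n$ the polynomials $P_n$ are given explicitly by~\eqref{eq.P} (e.g.\ $P_1=t$, $P_2=4t$, and one reads $\epsilon_1,\epsilon_2$ from the closed form for $\epsilon_n$), while $Q_n$ is given by~\eqref{eq.Q} in terms of $\arctanh(1/t)$ and the polynomials $W_n$; expanding $(t^2-1)^3\epsilon_n Q_nP_n^3$ as a Laurent/log series at $t=\infty$ and extracting the coefficient of $t^{-1}$ gives the residue, hence $f_3(n)$, for $n=1,2$. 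Since the recurrence is second order, these two values pin the sequence down uniquely and one verifies consistency by checking that the computed $f_3(3)$ agrees with the value the recurrence predicts from $f_3(1),f_3(2)$.

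The main obstacle is not conceptual but computational: the integrand is a cube of $Q_n$'s rational-plus-$\arctanh$ structure times a degree-six polynomial, so its annihilating ideal is large and the Gröbner basis / elimination step in the creative-telescoping computation is heavy. One must also ensure the telescoping operator is chosen so that its certificate $B$ genuinely has no residue contribution at infinity (which holds because the relevant antiderivative is single-valued there), so that the resulting recurrence is truly homogeneous rather than carrying an inhomogeneous boundary term. Both of these are handled by the package \texttt{HolonomicFunctions}~\cite{Koutschan09,Koutschan10b}, exactly as in the preceding lemmas, so the proof reduces to reporting the output and verifying the two initial values.
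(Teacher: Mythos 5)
Your proposal is correct and follows essentially the same route as the paper: an annihilating ideal assembled via holonomic closure properties, a creative-telescoping operator $A(n,S_n)+D_tB$ whose certificate part is discarded because the residue of a derivative vanishes, the $\epsilon_n$-factor handled by closure properties, and the two initial values computed directly. The only cosmetic difference is that you extract the coefficient of $\alpha^3$ first (working with $(t^2-1)^3\epsilon_n Q_nP_n^3$), whereas the paper keeps $\alpha$ as a parameter and telescopes $(t^2-1)^3(Q_n+\epsilon_n\alpha P_n)^4$ — mathematically equivalent.
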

\begin{proof}
It is an easy exercise to compute the first values of $f_3(n)$
explicitly with a computer algebra system. Thus we basically have to
derive the recurrence.  For this purpose, we compute an annihilating
ideal~$I$ for $(t^2-1)^3 (Q_n+\epsilon_n\alpha P_n)^4$ which is the
expression in the residue~\eqref{eq.f3}. For this purpose we apply
holonomic closure properties (note that $Q_n+\epsilon_n\alpha P_n$
satisfies the same equations as $Q_n$ itself). The resulting Gr\"obner
basis is too large to be printed here, namely a full page of equations
approximately. It is represented in the Ore algebra
$\mathbb{C}(n,t)\langle S_n,D_t\rangle$. In the next step we make use
of a special algorithm~\cite{Koutschan10c} for computing a creative
telescoping operator
\[
  A(n,S_n) + D_tB(n,t,S_n,D_t) \in I
\]
(its existence is guaranteed by the theory of holonomy).  Because we
are dealing with a residue we can forget about the part~$B$ and find
that $A$ annihilates the residue. In order to obtain $f_3(n)$ we need
to multiply the residue with $2\epsilon_n^{-2}$, which can be done
again by closure properties. The resulting operator represents exactly
the above recurrence.  All these computations were done with the above
mentioned package
\texttt{HolonomicFunctions}~\cite{Koutschan09,Koutschan10b}.
\end{proof}

\begin{lem}\label{lem.f1}
The sequence $f_1(n)$ given in~\eqref{eq.f1} satisfies the P-finite
recurrence
\begin{align*}
& (4n+11)(4n+9)(n+4)^2(n+1)^3(4n^2+8n-9)f_1(n+2)-{}\\
& (2n+3)(64n^8+768n^7+3580n^6+8028n^5+8113n^4+{}\\
&\quad 834n^3-4863n^2-3276n-648)f_1(n+1)+{}\\
& (4n+3)(4n+1)(n+2)^3(n-1)^2(4n^2+16n+3)f_1(n)=0
\end{align*}
subject to the initial conditions
\[
  f_1(2)=\frac{16}{1155},\quad f_1(3)=\frac{16}{2145}. 
\]
\end{lem}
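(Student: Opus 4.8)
The plan is to mirror exactly the proof strategy already established for Lemma~\ref{lem.f3}, since $f_1(n)$ as defined in~\eqref{eq.f1} has the same structural ingredients, only with a nested definite integral inside the residue. First I would set up the integrand as a holonomic function in the Ore algebra $\mathbb{C}(n,t)\langle S_n,D_t\rangle$. Recall that $f_1(n)$ extracts the $\alpha^3$-coefficient of
\[
  2\epsilon_n^{-2}\Res_{t=\infty}\!\Big((t^2-1)^2(Q_n+\epsilon_n\alpha P_n)^2(Q_2+\epsilon_2\alpha P_2)\!\int\!(t^2-1)^2(Q_n+\epsilon_n\alpha P_n)^2P_2\,dt\Big).
\]
Since $Q_n+\epsilon_n\alpha P_n$ satisfies the same differential equation~\eqref{eq.diffeqPQ} and the same three-term recurrence as $Q_n$ (the $\alpha$-term only shifts within the same solution space), I would first compute an annihilating ideal for the inner integrand $(t^2-1)^2(Q_n+\epsilon_n\alpha P_n)^2P_2$ by holonomic closure properties (products and the known $P_2=4t$). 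Applying the indefinite-integration closure operation then yields a holonomic description of the antiderivative $\int(t^2-1)^2(Q_n+\epsilon_n\alpha P_n)^2P_2\,dt$.

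Next I would multiply this antiderivative by the outer factor $(t^2-1)^2(Q_n+\epsilon_n\alpha P_n)^2(Q_2+\epsilon_2\alpha P_2)$, again using the multiplicative closure property, to obtain an annihilating ideal $I$ for the full expression inside the residue. The crucial step, exactly as in Lemma~\ref{lem.f3}, is to run the creative-telescoping algorithm of~\cite{Koutschan10c} to produce an operator
\[
  A(n,S_n)+D_tB(n,t,S_n,D_t)\in I,
\]
whose principal part $A$ depends only on $n$ and $S_n$. Because we are taking a residue at infinity, the total-derivative part $D_tB$ contributes nothing, so $A$ annihilates the residue. To pass from the residue to $f_1(n)$ itself I would incorporate the prefactor $2\epsilon_n^{-2}$ (a hypergeometric term in $n$) by one more closure-property step, and finally extract the $\alpha^3$-coefficient; since $\alpha$ enters only polynomially and commutes with all operators in $n$ and $t$, this extraction passes cleanly through the recurrence. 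The resulting operator is precisely the claimed third-order P-finite recurrence, and the stated initial conditions $f_1(2)=\tfrac{16}{1155}$, $f_1(3)=\tfrac{16}{2145}$ are verified by direct evaluation of the residues.

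The main obstacle I anticipate is purely computational rather than conceptual: the presence of the nested antiderivative makes the annihilating ideal, and hence the Gröbner basis, substantially larger than in the $f_3$ case, and the creative-telescoping step on such a bulky ideal is the bottleneck in both time and memory. This is reflected in the more complicated polynomial coefficients of the recurrence (degree-eight in the middle coefficient, together with the extra factors $4n^2+8n-9$ and $4n^2+16n+3$). All of these computations are nonetheless carried out algorithmically with the \texttt{HolonomicFunctions} package~\cite{Koutschan09,Koutschan10b}, so the proof reduces to certifying that the package's output satisfies the claim, which is a finite verification.
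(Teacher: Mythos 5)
Your overall pipeline (closure properties for the integrand, creative telescoping, dropping the $D_tB$ part because a residue at infinity is taken, incorporating the prefactor $2\epsilon_n^{-2}$, extracting the $\alpha^3$-coefficient, and checking the initial values directly) is the same as the paper's, and those steps are unobjectionable. But there is a genuine gap at the one point where Lemma~\ref{lem.f1} actually differs from Lemma~\ref{lem.f3}: you invoke an ``indefinite-integration closure operation,'' and no such closure property exists. Indefinite integration is \emph{not} among the holonomic closure properties --- the paper's proof says so explicitly --- because in the mixed algebra $\mathbb{C}(n,t)\langle S_n,D_t\rangle$ the antiderivative $G(n,t)=\int F(n,t)\,dt$ is only defined up to an arbitrary function of $n$, and the operators $LD_t$ with $L$ in the annihilating ideal $I$ of the integrand generate an ideal that is not zero-dimensional: its solution space contains all of these integration ``constants.'' With only that ideal in hand, the subsequent product-closure step and the final creative-telescoping step cannot be carried out, so your computation stalls exactly at the antiderivative.

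The paper closes this gap with a specific construction. Take a creative telescoping operator $A+D_tB\in I$ with $A$ free of $t$ and $D_t$; since $A$ commutes with $D_t$, one gets $D_t\bigl(A(G)+B(F)\bigr)=(A+D_tB)(F)=0$, so $A(G)=-B(F)$ up to a function of $n$ alone, i.e., exactly for a suitable choice of the integration constant. Computing an annihilating ideal $J$ for $B(F)$ by ordinary closure properties, every $C\in J$ then yields an operator $CA$ annihilating $G$; these operators, together with the $LD_t$ for $L\in I$, generate a zero-dimensional annihilating ideal for $G$, and only at that point can one continue as in Lemma~\ref{lem.f3}. Note also that your closing assessment gets the situation backwards: the new obstacle in Lemma~\ref{lem.f1} is conceptual (non-closure under indefinite integration and the choice of the integration constant), not merely the computational bulk of the Gr\"obner bases.
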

\begin{proof}
The proof is based on the same ideas as in Lemma~\ref{lem.f3}, except
that the expression of which we have to take the residue is more
complicated.  In particular, an indefinite integral occurs (recall
that indefinite integration is not among the holonomic closure
properties) and it is not clear a priori how to choose the integration
constant such that the result is again holonomic.  We start by
computing an annihilating ideal~$I$ for
\[
  F(n,t)=(t^2-1)^2(Q_n+\epsilon_n\alpha P_n)^2.
\]
Thus for all $A\in I$ the operator $AD_t$ annihilates the indefinite
integral $\int F(n,t)\,dt$. Additionally, from a creative telescoping
operator $A+D_tB\in I$ we can derive more such annihilating operators.
Let $J$ denote the annihilating ideal for $B(F)$ which can be obtained
by holonomic closure properties. Then for every $C\in J$, the
operator~$CA$ annihilates the indefinite integral as well. Altogether
we obtain a zero-dimensional annihilating ideal for $\int F(n,t)\,dt$,
and continue as in Lemma~\ref{lem.f3}.
\end{proof}

These recurrences in Lemmas~\ref{lem.f3} and~\ref{lem.f1} are
irreducible (in the sense that the corresponding operator cannot be
factorized), and so we are not able to find closed forms for $f_1$ and
$f_3$. The recurrence for $f_2(2n)$ is given by a third-order
recurrence with polynomial coefficients of degree larger than~$50$,
which we do not state here explicitly. The initial conditions are
\[
  f_2(2)=\frac{16}{1155},\quad f_2(4)=\frac{184}{183141},\quad f_2(6)=\frac{38308}{181081875}. 
\] 
This recurrence is reducible and possesses a hypergeometric solution
\[  
f_2(2)\frac{8\pi^2\GAMMA(n+1)\GAMMA(5/6+n)^2\GAMMA(1/6+n)^2\GAMMA(n)^3}
{25\GAMMA(n+2/3)^2\GAMMA(3/2+n)^3\GAMMA(1/2+n)\GAMMA(4/3+n)^2}
\]
but because $f_2(2)\neq 0$, the recurrence for $f_2(2n)$ cannot be reduced.

We are interested in a \textbf{practical} way to apply the third-order
variational equation. To do this, these recurrences are not enough,
since we need closed forms. As these closed forms do not exist, we
will instead produce closed form expressions which approach $f_1$,
$f_2$, and $f_3$ with a controlled relative error. In the following,
we will denote the harmonic numbers by
\[  H(n)= \sum\limits_{i=1}^{n-1}\frac{1}{i}. \]

\begin{defi}
Let us consider an operator $L\in\mathbb{C}\langle n,S_n\rangle$,
in other words $L$ represents a linear recurrence with polynomial
coefficients.  We will say that $L$ is regular at infinity if for all
solutions~$u$ (i.e., $Lu=0$) there exist $\alpha\in\mathbb{Z}$,
$\beta\in\mathbb{N}$, and $\gamma\in\mathbb{C}$ such that
\[
  u(n)\sim \gamma n^\alpha H(n)^\beta  \quad\text{for } n\to\infty.
\]
\end{defi}

\begin{thm}
Consider $L\in\mathbb{C}\langle n,S_n\rangle$ of order $k$
and assume that it is regular at infinity. Then for all $p\in\mathbb{N}$
and for all $u$ solution of $Lu=0$, there exists a function
$F\in\mathbb{C}(n)[H(n)]$ with degree in $H(n)$ less than $k-1$
such that
$$u(n)=F(n) +O\left( \frac{H(n)^{k-1}}{n^p} \right).$$
\end{thm}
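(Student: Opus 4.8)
The plan is to treat $H(n)$ as a discrete analogue of the logarithm and to carry out the whole argument inside the ring $R=\mathbb{C}(n)[H(n)]$. The key algebraic observation is that $S_nH(n)=H(n)+\frac1n$, so $S_n$ maps $R$ into itself; moreover $R$ is filtered by the degree in $H(n)$, and on the associated graded pieces $S_n$ acts as the ordinary shift on $\mathbb{C}(n)$, because $S_n\big(c(n)H(n)^d\big)=c(n+1)H(n)^d$ modulo terms of strictly lower $H$-degree. Consequently $L$ acts on $R$ as a triangular operator with respect to this filtration, whose symbol on each graded piece is the rational-coefficient recurrence $\bar L=\sum_j p_j(n)S_n^j$ acting on $\mathbb{C}(n)$. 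Solving $Lu\equiv 0$ up to a controlled error therefore reduces, stratum by stratum, to solving inhomogeneous equations for $\bar L$ over $\mathbb{C}(n)$, which can be done as power series in $1/n$.

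Second, I would build a formal fundamental system of solutions. Since $L$ is regular at infinity, the characteristic roots at $n=\infty$ are all equal to $1$ (any other root would force a factor $\rho^n$ with $\rho\neq1$, contradicting $u\sim\gamma n^{\alpha}H(n)^{\beta}$), the exponents $\alpha$ are the integer roots of the indicial equation, and the largest logarithmic depth $\beta$ that occurs is bounded by $k-1$, the order of $L$. Determining the coefficients recursively, each formal solution has the shape $n^{\alpha}\sum_{\beta}H(n)^{\beta}\,g_{\beta}(1/n)$ with $g_{\beta}$ formal power series; a new factor of $H(n)$ is introduced exactly when the indicial recursion becomes resonant, which is the mechanism that produces the logarithms and caps their degree at $k-1$ (the maximal length of a resonant chain being bounded by the order). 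Truncating each inner series $g_{\beta}$ at order $p$ yields an element $F\in\mathbb{C}(n)[H(n)]$ of $H$-degree at most $k-1$, which is the candidate approximant; for the solutions of interest here (those giving $f_1,f_2,f_3$, whose own log-depth is $0$) this degree is in fact strictly below $k-1$.

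Third, I would upgrade this formal match to a genuine one with error control. Regularity at infinity guarantees that the chosen solution $u$ has a leading term $\gamma n^{\alpha}H(n)^{\beta}$, which I match to the corresponding formal solution; it then remains to show that the truncation $F$ approximates $u$ with remainder $O\big(H(n)^{k-1}/n^{p}\big)$. The natural tool is discrete variation of constants: writing $w=u-F$, one has $Lw=-LF=:r$, where $r$ is an explicitly small inhomogeneity of size $O(n^{-p})$ up to powers of $H(n)$ by the construction of $F$, and one estimates $w$ by convolving the fundamental solutions against $r$ through an Abel (summation-by-parts) argument. Here regularity at infinity is decisive: it forces all fundamental solutions to grow like $n^{\alpha_i}H(n)^{\beta_i}$ with a bounded spread of exponents and no factor $\rho^n$, so the Casoratian stays comparable to a power of $n$ and the convolution of $r$ against a well-conditioned fundamental system is again $O\big(H(n)^{k-1}/n^{p}\big)$.

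The main obstacle is precisely this last step — turning a formal asymptotic expansion into a rigorous Poincaré-type expansion with the stated remainder, i.e. an effective version of Birkhoff--Trjitzinsky theory. Two points need care. First, one must verify that the coefficient recursion for the $g_{\beta}$ never stalls: apart from the finitely many resonances already absorbed into the factors of $H(n)$, the relevant indicial factors must be nonzero for all large indices, so that truncation at order $p$ is legitimate for every $p$. Second, the variation-of-constants estimate must be made uniform in $n$, and it is here that the well-conditioning afforded by ``regular at infinity'' (no $\rho^n$, integer exponents, log-depth $\le k-1$) is indispensable; without it the Casoratian could decay and the error bound would fail. Once these are in place, collecting the retained terms into $F$ and the truncation tails into the remainder gives $u(n)=F(n)+O\big(H(n)^{k-1}/n^{p}\big)$ with $\deg_{H(n)}F\le k-1$, which yields the asserted bound.
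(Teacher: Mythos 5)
There is a genuine gap, and you name it yourself: ``turning a formal asymptotic expansion into a rigorous Poincar\'e-type expansion with the stated remainder'' is not a technical afterthought but the entire content of the theorem. The algebraic part of your plan (triangularity of $S_n$ on $\mathbb{C}(n)[H(n)]$ via $S_nH(n)=H(n)+\frac{1}{n}$, integer indicial roots, resonances producing the powers of $H(n)$ with depth at most $k-1$) is routine; everything substantive is deferred to the step you flag as ``the main obstacle'' and never carry out. The paper does not attempt this analysis at all: it deduces the statement directly from Birkhoff's theorem in the form given by Wimp and Zeilberger \cite{13}, which supplies, for any P-finite recurrence, a fundamental system with Birkhoff-type expansions (exponential part, power part, logarithmic powers); the hypothesis ``regular at infinity'' says precisely that the exponential part is absent, and truncating the remaining expansion at order $p$ yields $F$. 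So the two honest options are to cite that theorem, as the paper does, or to actually prove the difference-equation analogue of Birkhoff--Trjitzinsky theory, which is a substantial piece of analysis, not a paragraph.

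Separately, the error-control scheme as you describe it would fail even granting the formal theory. You match $u$ to a formal solution only through its \emph{leading} term, set $w=u-F$, and argue that $w$ is small because $Lw=-LF$ is small. But a solution of an inhomogeneous equation with small right-hand side is small only modulo homogeneous solutions: $w$ can, and generically will, contain components along the other fundamental solutions, of sizes $n^{\alpha_i}H(n)^{\beta_i}$, which are certainly not $O\big(H(n)^{k-1}/n^{p}\big)$ (e.g.\ if $u\sim n^2$ and another solution grows like $n$, a stray multiple of the latter ruins the bound). The correct procedure is to construct actual solutions asymptotic to \emph{each} formal solution to all orders (by a fixed-point or successive-approximation argument on the variation-of-constants operator), check that they form a basis, and expand the given $u$ in that basis, so that all lower-order homogeneous contributions are absorbed into $F$ rather than left in the remainder. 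That construction is exactly what the cited theorem of Birkhoff in \cite{13} provides and what your proposal leaves unproven.
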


This theorem is directly implied by the theorem of Birkoff given in \cite{13}, which gives a general form of an asymptotic expansion which is always possible. In our case, we will only use what we call the regular case, which in a Birkoff expansion corresponds to not having an exponential part.

\begin{defi}
Consider a function $f:\mathbb{N} \longrightarrow \mathbb{R}$ and a function $F\in\mathbb{R}(n)[H(n)]$. We say that $F$ is an approximation of $f$ with relative error $\epsilon$ at rank $n_0$ if
$$\left| \frac{f(n)}{F(n)} -1 \right| \leq \epsilon \quad \forall n\geq n_0.$$ 
We consider $p$ functions $f_1,\dots,f_p:\mathbb{N} \longrightarrow \mathbb{R}$ and approximations $F_1,\dots,$ $F_p\in\mathbb{R}(n)[H(n)]$ with relative error $\epsilon$ at rank $n_0$. We define the error amplification factor $A$ by
$$A=\min \left\lbrace \tilde{A}\in\mathbb{R}^*_+ \hbox{ such that } \left| \frac{\sum\limits_{i=1}^p f_i(n)}{\sum\limits_{i=1}^p F_i(n)} -1 \right| \leq \tilde{A}\epsilon \quad \forall n\geq n_0 \right\rbrace .$$
\end{defi}

\begin{lem}
We consider $p$ functions $f_1,\dots,f_p:\mathbb{N} \longrightarrow \mathbb{R}$ and approximations $F_1,\dots,F_p\in\mathbb{R}(n)[H(n)]$ with relative error $\epsilon<1$ at rank $n_0$ and $A$ their amplification factor. Then
$$A \leq \max\limits_{n\geq n_0} \frac{\sum\limits_{i=1}^p \left| F_i(n) \right| }{\left| \sum\limits_{i=1}^p F_i(n) \right| }.$$
\end{lem}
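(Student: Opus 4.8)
The plan is to reduce the relative-error hypothesis to an absolute triangle-inequality estimate, and then to observe that the right-hand side of the claimed bound is itself an admissible value of $\tilde{A}$ in the definition of the amplification factor; since $A$ is the \emph{infimum} of all admissible values, it cannot exceed this one.

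First I would rewrite the assumption $\left| f_i(n)/F_i(n) - 1 \right| \leq \epsilon$ as the absolute bound $\left| f_i(n) - F_i(n) \right| \leq \epsilon \left| F_i(n) \right|$, valid for every $i \in \{1,\dots,p\}$ and every $n \geq n_0$. This step tacitly uses $F_i(n) \neq 0$, which is part of what it means for $F_i$ to approximate $f_i$ with a finite relative error.

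Next, for a fixed $n \geq n_0$ I would put the difference of the quotient and $1$ over the common denominator $\sum_{i=1}^p F_i(n)$ and bound the numerator by the triangle inequality followed by the per-summand estimate:
$$\left| \frac{\sum_{i=1}^p f_i(n)}{\sum_{i=1}^p F_i(n)} - 1 \right| = \frac{\left| \sum_{i=1}^p \big( f_i(n) - F_i(n) \big) \right|}{\left| \sum_{i=1}^p F_i(n) \right|} \leq \frac{\sum_{i=1}^p \left| f_i(n) - F_i(n) \right|}{\left| \sum_{i=1}^p F_i(n) \right|} \leq \epsilon\,\frac{\sum_{i=1}^p \left| F_i(n) \right|}{\left| \sum_{i=1}^p F_i(n) \right|}.$$
Bounding the last fraction by its maximum over $n \geq n_0$ then yields, uniformly in $n$,
$$\left| \frac{\sum_{i=1}^p f_i(n)}{\sum_{i=1}^p F_i(n)} - 1 \right| \leq \epsilon \max_{m \geq n_0} \frac{\sum_{i=1}^p \left| F_i(m) \right|}{\left| \sum_{i=1}^p F_i(m) \right|}.$$

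Finally, setting $\tilde{A} = \max_{n \geq n_0} \frac{\sum_{i=1}^p |F_i(n)|}{|\sum_{i=1}^p F_i(n)|}$, the inequality just established shows that this $\tilde{A}$ satisfies the defining condition of the amplification factor; by the minimality in the definition of $A$ we conclude $A \leq \tilde{A}$, which is exactly the assertion. The argument is elementary, so there is no genuine analytic obstacle; the only point to watch is the bookkeeping that the denominators $\sum_{i=1}^p F_i(n)$ do not vanish for $n \geq n_0$, so that every quotient and the maximum are well defined — this is implicit in the definitions of relative error and of $A$.
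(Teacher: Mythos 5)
Your proof is correct and follows essentially the same route as the paper: both reduce the claim to the pointwise estimate $\left|\sum_i f_i(n)\big/\sum_i F_i(n) - 1\right| \le \epsilon \sum_i |F_i(n)|\big/\left|\sum_i F_i(n)\right|$ and then conclude from the minimality in the definition of~$A$. The only difference is in how that estimate is justified: you apply the triangle inequality directly to $|f_i(n)-F_i(n)|\le\epsilon|F_i(n)|$, whereas the paper argues by a worst-case substitution $f_i(n)\mapsto(1\pm\epsilon)F_i(n)$ with sign bookkeeping (which is where it invokes $\epsilon<1$); your version is slightly cleaner and does not even need that hypothesis.
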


\begin{proof}
The lemma is equivalent to prove that
$$\left| \frac{\sum\limits_{i=1}^p f_i(n)}{\sum\limits_{i=1}^p F_i(n)} -1 \right| \leq \epsilon \max\limits_{n\geq n_0} \frac{\sum\limits_{i=1}^p \left| F_i(n) \right| }{\left| \sum\limits_{i=1}^p F_i(n) \right| }$$
So one just needs to maximize the left hand side. We already know that $|f_i(n)/F_i(n) -1| \leq \epsilon$. So depending on the sign of $f_i(n)$ we replace $f_i(n)$ by $(1-\epsilon)F_i(n)$ or $(1+\epsilon)F_i(n)$. We then expand
\begin{align*}
\left| \frac{\sum\limits_{i=1}^p f_i(n)}{\sum\limits_{i=1}^p F_i(n)} -1 \right| \leq \left|\frac{\epsilon \sum\limits_{i=1}^p \hbox{sign}(f_i(n)) F_i(n)}{\sum\limits_{i=1}^p F_i(n)} \right|
\leq \left|\frac{\epsilon \sum\limits_{i=1}^p \left|F_i(n)\right|}{\sum\limits_{i=1}^p F_i(n)} \right|\\
\leq \epsilon \max\limits_{n\geq n_0} \frac{\sum\limits_{i=1}^p \left| F_i(n) \right| }{\left| \sum\limits_{i=1}^p F_i(n) \right| }
\end{align*}
using the fact that $f_i(n)$ and $F_i(n)$ have always the same sign for $n\geq n_0$ (because $\epsilon<1$).
\end{proof}

In practice, we first check that the sign of the functions $F_i(n)$ and their sum does not change for $n\geq n_0$ and then we prove a majoration of the resulting expression in $\mathbb{R}(n,H(n))$. So all comes down to prove that some polynomial in $\mathbb{R}[n,H(n)]$ does not vanish for $n\geq n_0$. This can be done by first making an encadrement of the function $H(n)$ and then prove that the corresponding bivariate polynomial does not vanish on a particular algebraic subset. Such a problem can be algorithmically decided.

\begin{thm}\label{thm:maj}
Consider the recurrence equation
\begin{equation}\label{eqrecurrence}
u(n+1)=A(n)u(n) \quad \forall n \in\mathbb{N}, \;A(n) \in M_p(\mathbb{C})
\end{equation}
Consider $\lVert \cdot \rVert$ a matricial norm and $R(n)$ the resolvant matrix of equation~\eqref{eqrecurrence}. Assume that
$$M(\infty)=\sum\limits_{j=0}^\infty \lVert A(j)-I_p \rVert\; < 1 .$$
Then
$$\lVert R(n) -I_p \rVert \leq \frac{M(\infty)}{1-M(\infty)} \quad \forall n\in\mathbb{N}.$$
\end{thm}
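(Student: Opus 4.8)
The plan is to reduce the matrix statement to a scalar (discrete Gronwall) estimate on the norm of the error $R(n)-I_p$, and then to bound a finite product by a geometric series. I would write $B(j)=A(j)-I_p$ and $b_j=\lVert B(j)\rVert$, so that the hypothesis reads $\sum_{j=0}^\infty b_j=M(\infty)<1$. Recall that the resolvent matrix satisfies $R(0)=I_p$ together with the one-step relation $R(n+1)=A(n)R(n)=(I_p+B(n))R(n)$, so that the error obeys the recursion
$$R(n+1)-I_p=\bigl(R(n)-I_p\bigr)+B(n)R(n).$$

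First I would pass to norms. Setting $e_n=\lVert R(n)-I_p\rVert$ and using the triangle inequality together with submultiplicativity of the matrix norm, along with $\lVert R(n)\rVert\le 1+e_n$, the recursion gives the scalar inequality $e_{n+1}\le e_n(1+b_n)+b_n$, which rearranges to $e_{n+1}+1\le (e_n+1)(1+b_n)$. Since $e_0=0$, an immediate induction on $n$ yields
$$e_n+1\le \prod_{j=0}^{n-1}(1+b_j).$$

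The key step is then to bound this product uniformly in $n$. Because all $b_j\ge 0$ and $\sum_{j=0}^{n-1}b_j\le M(\infty)<1$, I would prove by induction on the number of factors that $\prod_{j=0}^{n-1}(1+b_j)\le \bigl(1-\sum_{j=0}^{n-1}b_j\bigr)^{-1}\le \bigl(1-M(\infty)\bigr)^{-1}$; the inductive step reduces to the elementary inequality $(1+b)(1-s-b)\le 1-s$, valid for all $b,s\ge 0$ (here $s$ plays the role of the partial sum already accumulated). Equivalently, expanding the product into the elementary symmetric functions $e_k$ of the $b_j$ and using $e_k\le M(\infty)^k$ reproduces the geometric series $\sum_k M(\infty)^k=\bigl(1-M(\infty)\bigr)^{-1}$. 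Substituting back gives $e_n\le \bigl(1-M(\infty)\bigr)^{-1}-1=M(\infty)/(1-M(\infty))$ for every $n$, which is exactly the claimed estimate.

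There is no genuinely hard step here: the result is a discrete Neumann/Gronwall bound, and the hypothesis $M(\infty)<1$ is precisely what guarantees convergence of the comparison geometric series. The only point demanding a little care is to extract the \emph{sharp} constant $M(\infty)/(1-M(\infty))$, rather than a cruder bound such as $e^{M(\infty)}-1$ that would follow from the weaker product estimate $\prod_j(1+b_j)\le e^{\sum_j b_j}$. Securing this sharp constant is exactly the content of the product-versus-geometric-series comparison in the previous paragraph, and that comparison is therefore the step I would present most carefully.
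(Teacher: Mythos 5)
Your proof is correct, but it takes a multiplicative route where the paper's is additive. The paper writes the error through the telescoping identity $R(n)-I_p=\sum_{j=0}^{n-1}(A(j)-I_p)R(j)=\sum_{j=0}^{n-1}\bigl((A(j)-I_p)(R(j)-I_p)+(A(j)-I_p)\bigr)$ and then runs an induction with the ansatz $\lVert R(j)-I_p\rVert\le C\,M(j)$, where $M(j)=\sum_{i<j}\lVert A(i)-I_p\rVert$; plugging the ansatz into the sum gives $\lVert R(n)-I_p\rVert\le(1+CM(\infty))M(n)$, and the self-consistency requirement $1+CM(\infty)\le C$ forces the choice $C=(1-M(\infty))^{-1}$, whence $\lVert R(n)-I_p\rVert\le M(n)/(1-M(\infty))\le M(\infty)/(1-M(\infty))$. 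You instead fold the recursion into the multiplicative estimate $e_{n+1}+1\le(e_n+1)(1+b_n)$ and compare $\prod_j(1+b_j)$ with a geometric series; this avoids the undetermined constant entirely and even yields the marginally sharper pointwise bound $M(n)/(1-M(n))$. One caveat: your step $\lVert R(n)\rVert\le 1+e_n$ tacitly uses $\lVert I_p\rVert=1$, which holds for induced norms (such as the norm $\lVert\cdot\rVert_\infty$ used later in the paper) but not for every submultiplicative norm (e.g.\ the Frobenius norm, where $\lVert I_p\rVert=\sqrt{p}$). The paper's decomposition never needs to bound $\lVert R(j)\rVert$ itself and so is insensitive to $\lVert I_p\rVert$; you can repair your version at no cost by writing $B(n)R(n)=B(n)(R(n)-I_p)+B(n)$ before taking norms, which gives the same inequality $e_{n+1}\le e_n(1+b_n)+b_n$ with no assumption on $\lVert I_p\rVert$.
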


\begin{proof}
We write
$$R(n)=\prod\limits_{i=0}^{n-1} A(i)= \prod\limits_{i=0}^{n-1} ((A(i)-I_p) +I_p).$$
Let us pose
$$M(n)=\sum\limits_{j=0}^{n-1} \lVert A(j)-I_p \rVert .$$
We want to prove a majoration of the type
\begin{equation}\label{majoration}
\lVert R(n) -I_p \rVert \leq C M(n)
\end{equation}
with a suitable constant $C>0$. For $n=1$, this is true with $C=1$. Let us prove equation~\eqref{majoration} by recurrence:
$$R(j)=\prod\limits_{i=0}^{j-1} ((A(i)-I_p) +I_p)= (A(j-1)-I_p)\prod\limits_{i=0}^{j-2} A(i) +\prod\limits_{i=0}^{j-2} A(i), $$
$$R(j)-R(j-1)= (A(j-1)-I_p)\prod\limits_{i=0}^{j-2} A(i)= (A(j-1)-I_p) R(j-1).$$
Then we sum these equations for $1\leq j\leq n$ which produces
\begin{align*}
\lVert R(n)-I_p \rVert= \left\lVert \sum\limits_{j=0}^{n-1} (A(j)-I_p)(R(j)-I_p) +(A(j)-I_p) \right\rVert \\
 \leq \sum\limits_{j=0}^{n-1} \lVert A(j)-I_p \rVert\, \lVert R(j)-I_p\rVert +\lVert A(j)-I_p\rVert\\
\leq M(n) + \sum\limits_{j=0}^{n-1} \lVert A(j)-I_p \rVert\, C M(j)
=M(n)+ C M(n)^2\\
\leq (1+C M(\infty)) M(n)
\end{align*}
using the fact that $M(n)$ is a growing sequence. So the recurrence property is proved if $C\leq 1+C M(\infty)$ which is equivalent to $C\geq (1-M(\infty))^{-1}\geq 1$. So this proves that
$$\lVert R(n) -I_p \rVert \leq \frac{M(n)}{1-M(\infty)} \leq \frac{M(\infty)}{1-M(\infty)}$$
which proves the theorem.
\end{proof}

The main application of this theorem is to compute a sequence with
controlled error. Let us take an operator $L\in\mathbb{R}\langle
n,S_n\rangle$ regular at infinity. We can then compute an
asymptotic expansion of the resolvant matrix of $L$, and an error
matrix which will satisfy an equation like~\eqref{eqrecurrence}. Then
for an $n_0\in\mathbb{N}$, we can apply Theorem~\ref{thm:maj} for the
shifted sequence $u(n+n_0)$, and the majoration $M(\infty)$ will
become very small for $n_0$ big enough, giving us that the error is
always lower than some explicit bound. This has very important
consequences for the application of the higher variational method. In
particular, it becomes possible to rigorously prove that a sequence of
potentials with the unbounded eigenvalue property does not satisfy
integrability criteria for $\lambda$ large enough, and thus coming
back to a bounded eigenvalue problem.

\section{Application at Order $2$}

We now apply the second order criterion to our example. We begin with
the case~$E_4$. Before we state the corresponding theorem, we need a
preparatory lemma concerning the solutions of a certain Diophantine
equation.

\begin{lem} \label{lem:diophantine1}
The set of solutions $(k_1,k_2)\in\mathbb{N}^2$ of the Diophantine equation
\[
  R(k_1,k_2)=k_2^2k_1^2+k_2k_1^2-75k_1^2-75k_1+k_2k_1-27k_2+k_2^2k_1-27k_2^2=0
\]
is given by $\{(0,0),\,(6,14)\}$.
\end{lem}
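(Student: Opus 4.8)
The plan is to solve the Diophantine equation
\[
  R(k_1,k_2)=k_2^2k_1^2+k_2k_1^2-75k_1^2-75k_1+k_2k_1-27k_2+k_2^2k_1-27k_2^2=0
\]
over $\mathbb{N}^2$ by treating it as a quadratic in one of the variables and exploiting the resulting discriminant condition. The equation is symmetric-looking but not symmetric, yet it is quadratic in both $k_1$ and $k_2$ separately, so the natural first move is to view $R$ as a quadratic polynomial in $k_2$ with coefficients that are polynomials in $k_1$. Collecting terms, I would write $R = (k_1^2+k_1-27)k_2^2 + (k_1^2+k_1-27)k_2 - (75k_1^2+75k_1)$, which happens to share the factor $(k_1^2+k_1-27)$ in the two leading coefficients. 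This already suggests completing the square or factoring will be clean.

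First I would handle the degenerate case where the leading coefficient vanishes, i.e. $k_1^2+k_1-27=0$; since $27$ makes this have no integer root, the coefficient is nonzero for all $k_1\in\mathbb{N}$, so $R=0$ is genuinely quadratic in $k_2$. Then, setting $a=k_1^2+k_1-27$ and $b=75(k_1^2+k_1)=75(a+27)$, the equation reads $a\,k_2^2 + a\,k_2 - b = 0$, and solving for $k_2$ gives
\[
  k_2=\frac{-a\pm\sqrt{a^2+4ab}}{2a}.
\]
For $k_2$ to be a nonnegative integer, the discriminant $\Delta(k_1)=a^2+4ab=a\bigl(a+4b\bigr)$ must be a perfect square. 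Substituting back, $a+4b=k_1^2+k_1-27+300(k_1^2+k_1)=301k_1^2+301k_1-27$, so the condition becomes
\[
  \Delta(k_1)=(k_1^2+k_1-27)(301k_1^2+301k_1-27)=\square.
\]

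The main obstacle is therefore reduced to a single-variable problem: determine for which $k_1\in\mathbb{N}$ the quartic $\Delta(k_1)$ is a perfect square. The plan here is to bound $\Delta(k_1)$ between two consecutive squares of explicit quadratics in $k_1$ for all large $k_1$, thereby confining the perfect-square condition to finitely many small values of $k_1$ which can be checked directly. Concretely, $\Delta(k_1)$ grows like $301\,k_1^4$, so I would compare it with $(\lfloor\sqrt{301}\rfloor\,k_1^2+c k_1+d)^2$ for suitable constants, or more robustly multiply through by $301$ so that $301\,\Delta=(301k_1^2+301k_1)^2 + (\text{lower order})$ and sandwich $301\,\Delta$ between $(301k_1^2+301k_1+e)^2$ for consecutive integers $e$; for $k_1$ beyond a computable threshold the only candidate square is pinned down, giving a residual quadratic-in-$k_1$ condition to solve. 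Once $k_1$ is confined to a finite range, I would substitute each admissible $k_1$, check whether $\Delta(k_1)$ is a square, and for those that are, read off $k_2$ from the formula above and retain only nonnegative integer solutions. I expect this to recover exactly $(k_1,k_2)=(0,0)$ (where $a=-27$, $b=0$, $\Delta=0$, $k_2=0$) and $(k_1,k_2)=(6,14)$ (where $a=15$, and one checks $\Delta$ is a square), matching the claimed solution set. The delicate part is making the sandwiching rigorous and ensuring the finite search range is genuinely exhaustive rather than merely heuristic; the arithmetic of squeezing the quartic between neighbouring squares is routine but must be carried out with explicit constants so that no sporadic solution is missed.
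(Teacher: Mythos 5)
Your setup is correct as far as it goes: collecting $R$ as a quadratic in $k_2$ indeed gives $R=(k_1^2+k_1-27)\,k_2^2+(k_1^2+k_1-27)\,k_2-75(k_1^2+k_1)$, the leading coefficient never vanishes at integers, and integrality of $k_2$ forces the discriminant $\Delta(k_1)=(k_1^2+k_1-27)(301k_1^2+301k_1-27)$ to be a perfect square (minor slip: at $k_1=0$ one has $\Delta=729=27^2$, not $0$). The genuine gap is precisely the step you call delicate: no sandwich between consecutive squares can work here, because the leading coefficient $301$ of the quartic $\Delta$ is not a perfect square. Squeezing requires $\sqrt{\Delta(k_1)}$ to lie in a provably strict unit interval around an explicit integer-valued expression; but $\sqrt{\Delta(k_1)}=\sqrt{301}\,(k_1^2+k_1)+O(1)$ with $\sqrt{301}$ irrational, so the fractional part of $\sqrt{\Delta(k_1)}$ comes arbitrarily close to $0$ and to $1$ infinitely often, and no explicit bounding quadratics can maintain strict inequalities for all large $k_1$. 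Your ``more robust'' variant is a non sequitur: if $\Delta=y^2$ then $301\Delta=301y^2$, which for $y\neq0$ is never a perfect square (since $301=7\cdot43$ is squarefree), so trapping $301\Delta$ between consecutive squares contradicts nothing. In fact, setting $m=k_1^2+k_1$ and $X=301m-4077$, the condition ``$\Delta$ is a square'' reads $X^2-301y^2=4050^2$, a Pell equation with infinitely many solutions, infinitely many of which have $m$ a positive integer (compose $(X,y)=(4050,0)$ or $(8565,435)$ with powers of the fundamental unit of $u^2-301v^2=1$). Finiteness in the lemma therefore hinges entirely on $m$ having the special form $k_1(k_1+1)$, i.e.\ on an integral-point problem on a curve of genus one, which no elementary squeezing argument can settle.

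The irony is that your own computation is one line away from a complete elementary proof. Your equation $a\,k_2^2+a\,k_2-b=0$ with $b=75(a+27)$ rearranges to $a(k_2^2+k_2-75)=75\cdot27$, i.e.
\[
  (k_1^2+k_1-27)(k_2^2+k_2-75)=2025 .
\]
Thus $k_1^2+k_1-27$ must be a (possibly negative) divisor $d$ of $2025=3^4\cdot5^2$; running through the finitely many divisors and keeping those for which both $27+d$ and $75+2025/d$ are products of two consecutive nonnegative integers leaves only $d=-27$, giving $(0,0)$, and $d=15$, giving $(6,14)$. The paper's proof is different again: it exploits the fact that the real curve $R=0$ has vertical and horizontal asymptotes at the irrational values $k_1=\frac{1}{2}\bigl(-1+\sqrt{109}\bigr)$ and $k_2=\frac{1}{2}\bigl(-1+\sqrt{301}\bigr)$, proving (by cylindrical algebraic decomposition, or by resultants plus real root isolation) that $k_2\geq50$ forces $4<k_1<5$ and $k_1\geq50$ forces $8<k_2<9$, so integer solutions are confined to a finite box which is searched exhaustively. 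Either the factorization argument or the paper's asymptote argument closes the problem; the discriminant-squeezing plan, as proposed, cannot.
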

\begin{proof}
We begin by proving that for $k_2\geq 50$, the condition $R=0$ implies $4<k_1<5$,
and similarly, for $k_1\geq 50$, we have $8<k_2<9$. These statements can be
written as logical expressions involving polynomial inequalities
\begin{align}
& \forall k_1\forall k_2: (k_1\geq0\land k_2\geq50\land R(k_1,k_2)=0)\implies 4<k_1<5,\label{eq.cad1}\\
& \forall k_1\forall k_2: (k_1\geq50\land k_2\geq0\land R(k_1,k_2)=0)\implies 8<k_2<9.\label{eq.cad2}
\end{align}
Such formulas can be proven routinely with quantifier elimination
techniques like cylindrical algebraic
decomposition~\cite{Collins75}. Indeed, applying the Mathematica command
\textbf{CylindricalDecomposition} to the above formulae reveals that
they are true. Therefore, there are no integer solutions for
$k_1\geq50$ or $k_2\geq50$ and an exhaustive search delivers exactly
the solutions claimed above (Figure \ref{figg}.

However, if we want to prove \eqref{eq.cad1} and \eqref{eq.cad2} ``by
hand'' (let's consider the first one for the moment), we have to look
at the largest real root of the polynomial
\[
  \res_{k_1}\!\left(\!R(k_1,k_2),\,\frac{\partial R(k_1,k_2)}{\partial k_1}\right)R(4,k_2)\,R(5,k_2).
\]
We find that this root is smaller than 50 (using real root isolation)
and that the limit
\[
  \lim\limits_{k_2 \rightarrow\infty} \kappa(k_2)=-\frac{1}{2}+\frac{1}{2}\sqrt{109}
\]
is between $4$ and $5$, where $\kappa(k_2)$ denotes the positive
solution of $R(k_1,k_2)=0$ regarded as an equation in~$k_1$. The
implication~\eqref{eq.cad1} follows, and \eqref{eq.cad2} can be proven
analogously.
\end{proof}

\begin{thm} \label{thm:order21} 
We consider the potential~$E_4$ given in Theorem~\ref{thm.families}.
If the variational equation near all Darboux points is integrable at
order~$2$, then the corresponding eigenvalues are integers of the form
$\lambda=(2l-1)(l+1),\;\;l\in\mathbb{N}$.
\end{thm}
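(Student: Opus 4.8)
The plan is to feed the order-$2$ criterion of Theorem~\ref{thm:second1} into the Diophantine analysis of Lemma~\ref{lem:diophantine1}. Recall from Theorem~\ref{thm.families} that $E_4=r^{-1}h(e^{i\theta})$ with $h(z)=a_0+a_1z+a_2z^2+a_3z^3$, and that by Lemma~\ref{thm:lem1} its non-degenerate Darboux points correspond to the roots $z_0$ of $h'$, the eigenvalue parameter there being $\lambda=U''(\theta_0)/U(\theta_0)=-z_0^2h''(z_0)/h(z_0)$ (using $h'(z_0)=0$). Order-$1$ integrability already forces $\lambda_i=\tfrac12 k_i(k_i+1)$ with $k_i\in\mathbb{N}$, so the eigenvalue $\lambda_i-1=\tfrac12(k_i-1)(k_i+2)$ has the claimed form $(2l-1)(l+1)$ precisely when $k_i$ is even. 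It therefore suffices to show that order-$2$ integrability excludes odd $k_i$.

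First I would make the order-$2$ constraint explicit. By Theorem~\ref{thm:second1}, at a Darboux point with odd $k_i$ one must have $\partial^3 V/\partial q_2^3=0$. Rotating that point to $(1,0)$ (so that $U'=0$ there) and differentiating $V=r^{-1}U(\theta)$ by the chain rule shows that this derivative equals $U'''(\theta_0)$ up to a nonzero factor. Writing $z_0=e^{i\theta_0}$ and using $h'(z_0)=0$, one computes $U'''(\theta_0)=-iz_0^2\bigl(3h''(z_0)+z_0h'''(z_0)\bigr)$, which for a cubic $h$ is a nonzero multiple of $a_2+4a_3z_0$. Hence the odd-$k_i$ constraint says exactly that the critical point of that Darboux point equals $z_\ast=-a_2/(4a_3)$; since $z_\ast$ must then be a root of $h'$, substituting it into $h'$ yields the coefficient relation $16a_1a_3=5a_2^2$.

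Next I would eliminate the auxiliary quantities. Plugging the expressions for $a_1,a_2,a_3$ from Theorem~\ref{thm.families} into $16a_1a_3=5a_2^2$ and cancelling the common factor $\lambda_2^2/(\lambda_1+\lambda_2)^2$ reduces the relation to
\[
  s^2+6s(\lambda_1-\lambda_2)-576\lambda_1\lambda_2-135(\lambda_1-\lambda_2)^2=0 .
\]
Using the defining relation $s^2=6\lambda_1\lambda_2(\lambda_1+\lambda_2-6)$ to remove $s^2$, isolating the remaining term linear in $s$, and squaring produces a polynomial identity in $\lambda_1,\lambda_2$ alone. After the substitution $\lambda_i=\tfrac12 k_i(k_i+1)$ and factorisation, the governing factor is exactly $R(k_1,k_2)$ from Lemma~\ref{lem:diophantine1}; a consistency check at $(\lambda_1,\lambda_2)=(21,105)$, i.e.\ $(k_1,k_2)=(6,14)$, where $s=-1260$ solves the reduced relation, confirms this. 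The additional factor created by squaring corresponds to the opposite sign of~$s$ and is discarded after checking that it contributes no admissible solution with an odd $k_i$.

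Finally I would conclude. By Lemma~\ref{lem:diophantine1}, $R(k_1,k_2)=0$ has in $\mathbb{N}^2$ only the solutions $(0,0)$ and $(6,14)$, both with even entries. Since order-$2$ integrability together with an odd $k_i$ forces $R(k_1,k_2)=0$, such a configuration would have to be one of these two even solutions, a contradiction. Thus $k_1$ and $k_2$ are both even; writing $k_i=2l_i$, the eigenvalues equal $\tfrac12(k_i-1)(k_i+2)=(2l_i-1)(l_i+1)$, as claimed. I expect the elimination to be the main obstacle: performing the $s$-elimination, the change of variables to $(k_1,k_2)$, the factorisation isolating $R$, and the verification that the spurious factor yields no odd admissible solution amount to heavy algebra best handled by a computer algebra system.
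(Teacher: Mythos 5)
Your proposal is correct and follows essentially the same route as the paper: both reduce the order-$2$ criterion of Theorem~\ref{thm:second1} to the vanishing of the third derivative at a Darboux point with odd $k_i$, eliminate $s$ via its defining quadratic relation, and land on the Diophantine equation of Lemma~\ref{lem:diophantine1} (your squared relation factors exactly as $R(k_1,k_2)\,R(k_2,k_1)=0$, which is the paper's pair of conditions~\eqref{eq10}). The only organizational difference is that you package both Darboux points into the single coefficient identity $16a_1a_3=5a_2^2$ and discard the swapped factor afterwards, whereas the paper computes $\partial^3 V/\partial q_2^3$ at each Darboux point separately and runs an explicit parity case analysis (which also disposes of the both-odd case directly via $4i(k_2+1)k_2\neq0$); your version subsumes that case, at the cost of the slight imprecision that odd $k_i$ forces only the product $R(k_1,k_2)R(k_2,k_1)$ to vanish rather than $R(k_1,k_2)$ itself, which your check of the spurious factor repairs.
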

\begin{proof}
We use the notation $U=rE_4$ from Theorem~\ref{thm.families}.
The condition $U'(\theta)=0$ yields the two Darboux points
\begin{equation}\label{eq8}
\begin{split}
c_1: e^{i\theta} & =1,\\
c_2: e^{i\theta} & =\frac{s+6\lambda_1}{s-6\lambda_2}.
\end{split}
\end{equation}
There are singular cases of the second equation, namely for
$s+6\lambda_1=0$ or $s-6\lambda_2=0$. After solving and replacing, we
find that these cases correspond exactly to $k_1=0$ and $k_1=3$, which
were excluded from~$E_4$.

We now compute the third derivative of~$V$, evaluated at the two
Darboux points $c_1$ and~$c_2$ given by expression~\eqref{eq8}:
\begin{align*}
  \frac{\partial^3 V}{\partial q_2^3}(c_1) & =
    \frac{i\lambda_1(s+15\lambda_1+9\lambda_2)}{\lambda_1+\lambda_2},\\ 
  \frac{\partial^3 V}{\partial q_2^3}(c_2) & = 
    -\frac{i\lambda_2(s-15\lambda_2-9\lambda_1)}{3(\lambda_1+\lambda_2)}.
\end{align*}
In the case $(k_1,k_2)$ both odd, both derivatives should vanish. We solve the system and we find $4i(k_2+1)k_2=0$. This is impossible for odd values. In the case $k_1$ odd $k_2$ even, the first one should vanish, and in the case $k_1$ even $k_2$ odd the second one should vanish. We get the equations
\begin{equation}\begin{split}\label{eq10}
\frac{k_1^2(k_1+1)^2(k_2^2k_1^2+k_2^2k_1-27k_2^2-27k_2-75k_1+k_2k_1-75k_1^2+k_2k_1^2)}{12(k_2^2+k_2+k_1+k_2^2)}\\
\frac{k_2^2(k_2+1)^2(k_1^2k_2^2+k_1^2k_2-27k_1^2-27k_1-75k_2+k_1k_2-75k_2^2+k_1k_2^2)}{12(k_1^2+k_1+k_2+k_2^2)}\\
\end{split}\end{equation}

These two conditions are symmetric. The first terms can never vanish because we have $k_1$ odd for the first one and $k_2$ odd for the second one. To conclude, we need to look at the last term, which corresponds to a Diophantine equation, and to prove that this equation does not have a solution with $k_1$ odd and $k_2$ even.

With Lemma~\ref{lem:diophantine1}, we have no solutions from the second term where $k_1$ and $k_2$ have different parity. We conclude that all the possibilities left are for $k_1,k_2$ even.
\end{proof}

It is well known that Diophantine equations in general cannot be
solved (Matiyasevich's theorem). This means that
Lemma~\ref{lem:diophantine1} is a lucky case, although not trivial to
prove. We therefore should remark that the study of this equation is
not absolutely mandatory. We could simply skip it, \textbf{assume}
that it is satisfied and continue further to the third-order
condition. This condition would add two additional equations in $k_1$
and $k_2$ and thus would allow to solve the problem in all generality.

Here we are in a special case. A Diophantine equation $R(k_1,k_2)=0$
can be solved only using real algebraic geometry in one of the
following cases:
\begin{enumerate}
\item The set $R^{-1}(0)\cap\mathbb{R^+}^2$ is compact. In this case
  we only have a finite number of points to test.
\item The set $R^{-1}(0)\cap\mathbb{R^+}^2$ is not compact but all
  infinite branches are asymptotes and the corresponding asymptotic
  straight lines have a rational slope. In this case, either $R$ is
  homogeneous and has an infinite number of solutions, or the integer
  solutions can be bounded: when approaching infinity, the infinite
  branch of $R^{-1}(0)$ comes closer to the asymptotic line without
  touching it; for rational slope, there is then a nonzero infimum for
  the distance between the asymptotic straight line and integer
  points).
\end{enumerate}
The first case can be considered to be part of the second one with no
asymptotes at all. In Lemma~\ref{lem:diophantine1}, we encounter the
second case.

\begin{figure}\label{figg}
\begin{center}
\includegraphics[width=7cm]{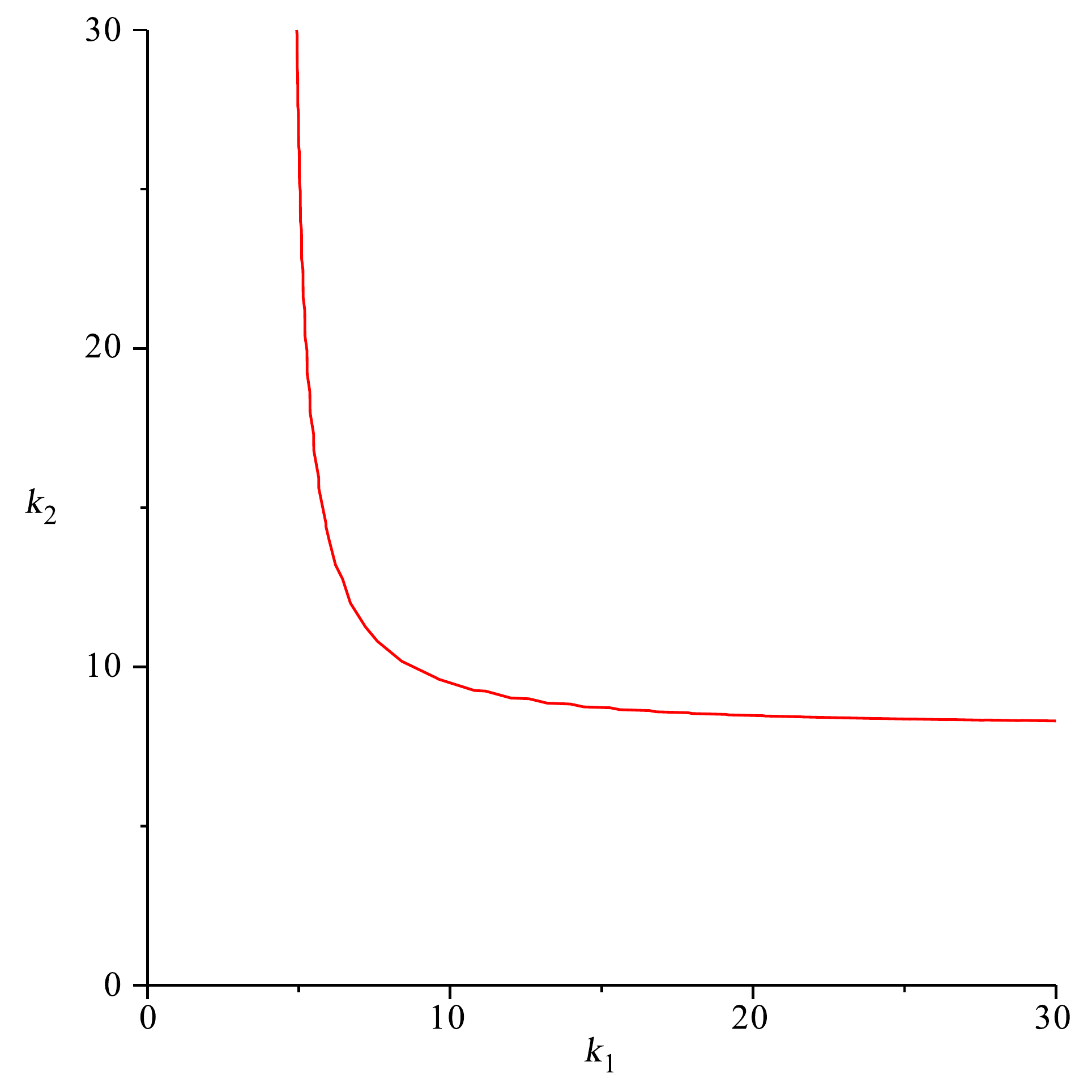}
\end{center}
\caption{Graph of $R^{-1}(0)$. The graph $R^{-1}(0)\cap\mathbb{R^+}^2$ is not compact but the infinite branches are asymptotes with rational or vertical slopes; here the asymptotes are $k_1+\frac{1}{2}-\frac{1}{2}\sqrt{109}=0$ and $k_2+\frac{1}{2}-\frac{1}{2}\sqrt{301}=0$.}
\end{figure}

\begin{rem}
The potential corresponding to $k_1,k_2=(6,14)$ is the following (with the good choice of valuation for the square root):
$$V(r,\theta)=\frac{1}{r}\left(-20+\frac{105}{2}e^{i\theta}-42e^{2i\theta}+\frac{21}{2}e^{3i\theta} \right).$$
This potential has two Darboux points, it is integrable at order $2$ near these two Darboux points and we have also that the third derivative near one of the Darboux points is zero (which is not needed for integrability at order $2$ but gives interesting properties in practice at order $3$).
\end{rem}

\begin{thm} \label{thm:order22}
Among the potentials in the families $E_1,E_2,E_3$, if a potential $V$ is meromorphically integrable, then it is of the form (after multiplying by some constant factor):
\begin{align*}
V & = \frac{1}{r}\left( -\frac{1}{3}k(2k+1)e^{3i\theta}+\frac{1}{2}k(2k+1)e^{2i\theta}-\frac{1}{6}(2k^2+k-6)\right),\\
V & = \frac{1}{r}\left(-\frac{1}{2}k(2k+1)e^{2i\theta}+k(2k+1)e^{i\theta}-\frac{1}{2}(2k^2+k-2)\right),
\end{align*}
for $k\in\mathbb{N}$.
\end{thm}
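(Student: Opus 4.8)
The plan is to handle the three families uniformly: each of $E_1,E_2,E_3$ has, over $\mathbb{C}$, exactly one Darboux point, sitting at $\theta=0$ (equivalently $z=e^{i\theta}=1$), so the whole statement reduces to deciding a single scalar integrability condition at that point. First I would locate the Darboux points via Lemma~\ref{thm:lem1}: writing $U(\theta)=h(e^{i\theta})$, they correspond to the nonzero roots of $h'$. For $E_2$ one has $h'(z)=z(2c+3dz)$ and for $E_3$ one has $h'(z)=b+2cz$, each with unique nonzero root $z=1$; for $E_1$, $h'(z)=b(z-1)^2$ has the single (double) root $z=1$. Hence there is in every case exactly one Darboux point, at $\theta=0$, and it is non-degenerate since $U(0)=1\neq 0$. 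A short evaluation of $U''(0)=-\sum_k a_k k^2$ then gives, through Lemma~\ref{thm:lem1}, the eigenvalue $\lambda=-1$ for $E_1$ (index $p=0$) and $\lambda=\tfrac12(m-1)(m+2)$ for $E_2,E_3$ (index $p=m$, where $m$ denotes the family parameter called $k$ in Theorem~\ref{thm.families}). Thus for $E_2,E_3$ the index is forced to equal $m$, whereas $E_1$ is pinned at the even index $0$.

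The one nontrivial local input is the value of $\partial^3 V/\partial q_2^3$ at $c=(1,0)$ in terms of the Fourier coefficients $a_k$. Setting $q_1=1,\ q_2=t$ and using $e^{i\theta}=(1+it)/\sqrt{1+t^2}$ gives $V(1,t)=\sum_k a_k(1+it)^{(k-1)/2}(1-it)^{-(k+1)/2}$; expanding the logarithm to third order yields $V(1,t)=\sum_k a_k\bigl(1+ikt-\tfrac{1+k^2}{2}t^2-\tfrac{ik(k^2+5)}{6}t^3+O(t^4)\bigr)$, so that
\[
  \frac{\partial^3 V}{\partial q_2^3}(c)=-i\sum_k a_k\,k(k^2+5).
\]
As a check, the $t^2$-coefficient reproduces the Hessian entry $\lambda$ of Lemma~\ref{thm:lem1}, and the expression vanishes for the Kepler potential $a_0=1$. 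Evaluating on the three families gives $\tfrac{5i}{2}m(m+1)$ for $E_2$, $\tfrac{3i}{2}m(m+1)$ for $E_3$, and $-2ib$ for $E_1$.

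Now I would apply the variational criteria. For $E_2$ and $E_3$ with odd parameter $m$ the index $p=m$ is odd, so Theorem~\ref{thm:second1} forces $\partial^3 V/\partial q_2^3(c)=0$; but the formula above is a nonzero multiple of $m(m+1)$ for $m\geq 1$, a contradiction. Hence only even $m=2k$ can be integrable, and substituting $m=2k$ into $E_2$ and $E_3$ reproduces exactly the two families in the statement. The family $E_1$ has $p=0$, which is \emph{even}, so Theorem~\ref{thm:second1} gives no constraint; here I would invoke the order-$3$ criterion. By the $n=0$ analysis carried out in the proof of Theorem~\ref{thm:main1}, integrability at order~$3$ forces $\partial^3 V/\partial q_2^3(c)=0$; since this derivative equals $-2ib$, we get $b=0$, i.e.\ $V=r^{-1}$, which is the $k=0$ instance of the first listed family. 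Collecting the three cases proves the theorem.

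The main obstacle is conceptual rather than computational: the order-$2$ criterion constrains only odd indices, so it kills the odd members of $E_2,E_3$ at once but is blind both to the even members (which are precisely the survivors and need no elimination here) and to $E_1$, whose unique Darboux point carries the even index $0$. Eliminating $E_1$ therefore cannot be done at order~$2$ and genuinely requires the order-$3$ criterion of Theorem~\ref{thm:main1} in its special $n=0$ form. Apart from this point, the only step that is not pure bookkeeping with resultants and Lemma~\ref{thm:lem1} is the Cartesian-to-Fourier formula for $\partial^3 V/\partial q_2^3$; once it is established, each case collapses to checking that a multiple of $m(m+1)$ (respectively of the $E_1$-parameter $b$) is nonzero.
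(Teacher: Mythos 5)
Your proof is correct and follows essentially the same route as the paper's: the order-$2$ criterion (Theorem~\ref{thm:second1}) eliminates the odd parameters in $E_2,E_3$ via the nonvanishing of $\frac{5}{2}ik(k+1)$ and $\frac{3}{2}ik(k+1)$, and the order-$3$ condition for the $n=0$ case established in the proof of Theorem~\ref{thm:main1} forces $b=0$ in $E_1$, leaving only $V=r^{-1}$. Your explicit Fourier-coefficient formula $\frac{\partial^3 V}{\partial q_2^3}(c)=-i\sum_k a_k\,k(k^2+5)$ (which agrees with $U'''(0)$ at a critical point of $U$) makes transparent the derivative evaluations that the paper only states, but it does not alter the argument.
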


\begin{proof}
The potentials $E_2$ and $E_3$ possess only one Darboux point. The corresponding potentials are
\begin{align*}
E_2:\; V & = r^{-1}\left(-\frac{1}{6}k(k+1)e^{3i\theta}+\frac{1}{4}k(k+1)e^{2i\theta}-\frac{1}{12}k^2-\frac{1}{12}k+1\right),\\
E_3:\; V & = r^{-1}\left(-\frac{1}{4}k(k+1)e^{2i\theta}+\frac{1}{2}k(k+1)e^{i\theta}-\frac{1}{4}k^2-\frac{1}{4}k+1\right).
\end{align*}
We know that if $k$ is odd, we have an additional integrability condition at order~$2$. We find that
$$\frac{\partial^3V}{\partial q_2^3}(c)=\frac{5}{2}ik(k+1) \hbox{ for } E_2,\qquad \frac{\partial^3V}{\partial q_2^3}(c)=\frac{3}{2}ik(k+1) \hbox{ for } E_3.$$
These terms should vanish. This is never fulfilled for odd~$k$. The sequence of potentials given by Theorem~\ref{thm:order22} corresponds exactly to the cases of even~$k$ (for which there is no condition for integrability at order~$2$). At last, we have the potential~$E_1$. The corresponding eigenvalue is always $-1$, so it is always integrable at order $2$. At order $3$, we know that the integrability condition is $U^{(3)}(0)=0$. We get
$$U^{(3)}(0)=-2ib$$
So the only possibility is $b=0$ and this corresponds to the potential $V=r^{-1}$. This potential is integrable and already belongs to the family described by Theorem~\ref{thm:order22}.

\end{proof}

\section{Application at Order $3$}

We will now prove Theorem~\ref{thm:main3}, building an algorithm to prove it. 
\begin{proof}
The scheme of the proof is the following
\begin{itemize}
\item First we prove that the recurrences for $f_1,f_2,f_3$ are regular at infinity.
\item We then produce a series expansion $\tilde{R}_i(n)$ at infinity at an order high enough of the resolvant matrix $R_i(n)$ associated to these recurrences.
\item We then write $R_i(n)=\tilde{R}_i(n) \tilde{R}_i(n_0)^{-1} R_i(n_0)  E_i(n)$ for a large enough $n_0\in\mathbb{N}$ and build a recurrence of the form~\eqref{eqrecurrence} whose resolvant matrix is $E_i(n)$ (after change of basis), which will be denoted by $E_i(n+1)=A_i(n) E_i(n)$. We have moreover that $E_i(n_0)$ is the identity matrix.
\item As $\tilde{R}_i(n)$ is a good approximation of $R_i(n)$ when $n\longrightarrow \infty$, the matrix $A_i(n)$ will tend to the identity matrix when $n\longrightarrow \infty$. Using Theorem~\ref{thm:maj} with a shift in the indices, we will have that
$$\lVert E_i(n) -I \rVert \leq \frac{\sum\limits_{j=n_0}^\infty \lVert A_i(j)-I \rVert}{1-\sum\limits_{j=n_0}^\infty \lVert A_i(j)-I \rVert } \quad \forall n\geq n_0$$
\item If we have chosen an expansion order and $n_0$ large enough, this sum will be finite and small, and thus will give us an approximation of $R_i(n)$ by $\tilde{R}_i(n)$ with relative error control. The expressions in Theorem~\ref{thm:main3} follow.
\end{itemize}

For $f_3(2n)$, we find the following asymptotic expansion (a high order makes up the computation easier for error control)
\begin{align*}
c_1 \left(\frac{1}{n^4}-\frac{1}{n^5}+\frac{25}{32n^6}-\frac{35}{64n^7}+\frac{183}{512n^8}\right)+\\
c_2\left(\left(\frac{3}{16n^4}-\frac{3}{16n^5}+\frac{75}{512n^6}-\frac{105}{1024n^7}+\frac{549}{8192n^8}\right)H(n)+\right.\\
\left. \frac{1}{n^2}-\frac{1}{2n^3}+\frac{19951}{46848n^4}-\frac{7507}{46848n^5}+\frac{96541}{1499136n^6}-\frac{58151}{2998272n^7} \right)
\end{align*}
This proves by the way that the recurrence for $f_3(2n)$ is regular. We do the same for $f_1(2n)$ and $f_2(2n)$ and we find that they are regular too. We then find a majoration of the norm of the error matrix $A_3(n)$
\begin{align*}
\lVert A_3(n) \rVert_{\infty} \leq 
\frac{9975}{256n^6}+\frac{29925}{4096}\frac{H(n)}{n^6}+\frac{9975}{256n^8}+\frac{29925}{4096}\frac{H(n)}{n^8}
\end{align*}
We choose now $n_0=100$. We majorate the sum of this majoration beginning at $n=100$. We find a majoration of this sum by 
$$\sum\limits_{n=100}^\infty \lVert A_3(n) \rVert_{\infty} \leq 4.84522\times 10^{-9}$$
$$\lVert E_3(n) \rVert \leq \frac{4.84522\times 10^{-9}}{1-4.84522\times 10^{-9}} \qquad \forall n\geq n_0$$
(an explicit rational number). We then compute the recurrence up to $n=100$, and then produce an encadrement (with error less than $10^{-10}$) of the result with rational numbers. Although it is not mandatory in theory, in practice recurrences tend to produce very large rational numbers, whose size grows linearly with $n$, and thus are impractical to manipulate. This gives us the coefficients $c_1,c_2$ with a good error control:
$$c_1=-\frac{883919839}{274877906944},\qquad c_2=-\frac{1740684681}{8589934592}.$$
We then compute the error amplification of the sum, and find that it is less than $33/32$. As the resulting expression is too complicated to manipulate for applications, we only keep the terms up to order~$3$ and prove that this new approximation has a relative error less than $10^{-5}$. The expressions for $f_1$ and $f_2$ are found with a similar way, with the exception that at the end, to produce a sufficiently simple and accurate formula, it is not sufficient to keep the terms up to order~$2$ (after there is a $H(n)$ that we want to avoid), so we need to add a term of order~$3$ (without $H(n)$) with a well chosen coefficient such that the error stays below $10^{-5}$ (else the result is only accurate to $10^{-3}$).
\end{proof}

\begin{thm} \label{thm:order30}
The third order integrability conditions for the families
\begin{align*}
V & = \frac{1}{r}\left( -\frac{1}{3}k(2k+1)e^{3i\theta}+\frac{1}{2}k(2k+1)e^{2i\theta}-\frac{1}{6}(2k^2+k-6)\right)\\
V & = \frac{1}{r}\left(-\frac{1}{2}k(2k+1)e^{2i\theta}+k(2k+1)e^{i\theta}-\frac{1}{2}(2k^2+k-2)\right)
\end{align*}
where $k\in\mathbb{N}^*$, are 
\begin{align*}
9(k+1)^2(2k-1)^2f_1(2k) & = 25k^2(2k+1)^2f_2(2k) + (66k^2+33k-9)f_3(2k),\\
9(k+1)^2(2k-1)^2f_1(2k) & = 9k^2(2k+1)^2f_2(2k) + (42k^2+21k-9)f_3(2k),
\end{align*}
respectively. They are never satisfied.
\end{thm}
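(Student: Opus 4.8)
The plan is to treat the statement in two stages: first to \emph{derive} the two displayed identities from Theorem~\ref{thm:main1}, and then to show that each fails for every $k\in\mathbb{N}^*$ by combining the error-controlled asymptotics of Theorem~\ref{thm:main3} with a finite check.

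For the derivation I would work at the unique Darboux point $c=(1,0)$ that each of $E_2,E_3$ possesses (as established in the proof of Theorem~\ref{thm:order22}). Writing $V=r^{-1}U(\theta)$ with $U(\theta)=\sum_j c_je^{ij\theta}$, using the chain rule $\frac{\partial}{\partial q_1}=\cos\theta\,\partial_r-\frac{\sin\theta}{r}\partial_\theta$ and $\frac{\partial}{\partial q_2}=\sin\theta\,\partial_r+\frac{\cos\theta}{r}\partial_\theta$, and exploiting $U'(0)=0$ together with the normalisation $U(0)=1$, I would first establish the general evaluation rules
\[
\frac{\partial^3V}{\partial q_2^3}(c)=U'''(0),\qquad
\frac{\partial^3V}{\partial q_1\partial q_2^2}(c)=-3\bigl(U''(0)-U(0)\bigr)=-3\lambda,\qquad
\frac{\partial^4V}{\partial q_2^4}(c)=9U(0)-14U''(0)+U''''(0),
\]
valid for any degree $-1$ potential whose Darboux point sits at $\theta=0$. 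A short Fourier computation gives, for \emph{both} families, $U''(0)=k(2k+1)$, so that $\lambda=k(2k+1)-1=(2k-1)(k+1)=\tfrac{1}{2}(p-1)(p+2)$ forces $p=2k$; since this is \emph{even}, the even branch of Theorem~\ref{thm:main1} applies (consistent with the Remark after Theorem~\ref{thm:second1}: these are exactly the cases carrying no order-$2$ constraint). Substituting the remaining Fourier data---$U'''(0)=5ik(2k+1)$, $U''''(0)=-19k(2k+1)$ for $E_2$ and $U'''(0)=3ik(2k+1)$, $U''''(0)=-7k(2k+1)$ for $E_3$---and noting that $\frac{\partial^3V}{\partial q_2^3}(c)$ is purely imaginary (so its square is negative) reassembles the three terms precisely into the two displayed identities.

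For the non-vanishing claim, let $G(k)$ denote the left-hand side minus the right-hand side of the relevant identity. The decisive structural fact, read off from Theorem~\ref{thm:main3}, is a growth-rate mismatch: the $f_1$-term $9(k+1)^2(2k-1)^2f_1(2k)\sim 36\cdot\frac{1511011}{67108864}\,k^2$ grows like $k^2$, whereas both the $f_2$-term ($\sim$ const, since $f_2(2k)=O(k^{-4})$ against a degree-$4$ prefactor) and the $f_3$-term ($\sim$ const, since $f_3(2k)=O(k^{-2})$ against a degree-$2$ prefactor) stay bounded. Hence $G(k)\to+\infty$, and the equation can only possibly hold for small $k$. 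To make this rigorous for $k\ge 100$, I would write $f_i(2k)=\epsilon_i(k)\tilde f_i(2k)$ with the explicit rational $\tilde f_i$ of~\eqref{eq3} and $|\epsilon_i-1|\le 10^{-5}$; then $G(k)$ differs from the explicit rational function $\tilde G(k)$ obtained by replacing each $f_i$ by $\tilde f_i$ by at most $10^{-5}\sum_i|\tilde T_i(k)|$, a negligible fraction of the dominant $f_1$-term. Since $\tilde G(k)>0$ for $k\ge 100$ is a positivity statement for a rational function that is readily checked (the gap is already of order $10^3$ at $k=100$), we conclude $G(k)>0$ there. The same argument, with the constants $25\to 9$ and $66k^2+33k-9\to 42k^2+21k-9$, disposes of the $E_3$ identity.

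Finally, the finitely many remaining cases $1\le k\le 99$ (where Theorem~\ref{thm:main3} is not guaranteed) are handled by evaluating $f_1(2k),f_2(2k),f_3(2k)$ exactly in rational arithmetic from the P-finite recurrences of Lemmas~\ref{lem.f3} and~\ref{lem.f1} (and the corresponding recurrence for $f_2(2n)$) and checking $G(k)\neq 0$ directly. I expect the genuine obstacle to be neither the derivative bookkeeping nor any individual numerical check, but rather the need to exclude a hypothetical accidental solution at some arbitrarily large $k$: because no closed form for $f_1,f_2,f_3$ exists, only the error-controlled asymptotics of Theorem~\ref{thm:main3} can cover the whole infinite family at once, and one must verify that the $10^{-5}$ relative error stays small against the $k^2$-growth of the leading term for \emph{all} $k\ge 100$---which, given the size of the gap already at $k=100$, it comfortably does.
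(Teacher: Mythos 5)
Your proposal is correct and follows essentially the same route as the paper: the paper's proof likewise substitutes the error-controlled approximations of Theorem~\ref{thm:main3} into the two conditions (bounding the error amplification, there by $33/32$) to rule out all $k\geq 100$, and disposes of $k<100$ by exhaustive testing via the P-finite recurrences. Your preliminary derivation of the two displayed identities from Theorem~\ref{thm:main1} (via the Euler relation giving $\frac{\partial^3 V}{\partial q_1\partial q_2^2}(c)=-3\lambda$ and the Fourier data at the Darboux point) is correct and simply makes explicit what the paper's proof leaves implicit.
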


\begin{proof}
We replace $f_1(2k),f_2(2k),f_3(2k)$ by their approximations, and then
compute the error amplification. It is less than $33/32$, and the
resulting expression does not vanish for $k\geq 100$. For $k<100$, we
make exhaustive testing and we do not find any solutions. For the
second equation, we do not find any solution either.
\end{proof}

\begin{thm} \label{thm:order31}
We consider the family of potentials~$E_4$
\begin{align*}
E_4:\quad V=r^{-1}\left(\frac{(s-6\lambda_2)\lambda_2}{18(\lambda_1+\lambda_2)}e^{3i\theta}-\frac{(3\lambda_1+s-3\lambda_2)\lambda_2}{6(\lambda_1+\lambda_2)}e^{2i\theta}+\right.\\
\left.\frac{(6\lambda_1+s)\lambda_2}{6(\lambda_1+\lambda_2)}e^{i\theta}+\frac{-9\lambda_1\lambda_2-\lambda_2s+18\lambda_1+18\lambda_2-3\lambda_2^2}{18(\lambda_1+\lambda_2)}\right)
\end{align*}
with
$$s^2=6\lambda_1^2\lambda_2+6\lambda_1\lambda_2^2-36\lambda_1\lambda_2\qquad \lambda_1=\frac{1}{2}(k_1-1)(k_1+2)+1$$
$$\lambda_2=\frac{1}{2}(k_2-1)(k_2+2)+1\quad k_1,k_2\in\mathbb{N}^*\;\;k_1\neq 3$$
The third order integrability condition for $E_4$ is of the form
$$Q_{k_1,k_2}(f_1(k_1),f_2(k_1),f_3(k_1))=0$$
$$Q_{k_2,k_1}(f_1(k_2),f_2(k_2),f_3(k_2))=0$$
where $Q$ is a quadratic form depending polynomially on $k_1$ and $k_2$.
\end{thm}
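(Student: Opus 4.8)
The plan is to apply Theorem~\ref{thm:main1} separately at the two Darboux points of $E_4$ and, after eliminating the auxiliary square root $s$, to read off the two displayed constraints. From the proof of Theorem~\ref{thm:order21}, $E_4$ has exactly the two Darboux points $c_1,c_2$ of~\eqref{eq8}, with associated integers $k_1$ and $k_2$ (the $p$ of Theorem~\ref{thm:main1}). Integrability at order~$3$ forces integrability at order~$2$, so by Theorem~\ref{thm:order21} both $k_1,k_2$ are even and we are in the even branch of Theorem~\ref{thm:main1} at each point. Thus, after the rotation and rescaling that brings $c_i$ to $(1,0)$ with $U=1$ there, the order-$3$ condition at $c_i$ reads
$$A_i^2\,f_1(k_i)+B_i^2\,f_2(k_i)+C_i\,f_3(k_i)=0,\qquad A_i=\tfrac{\partial^3 V}{\partial q_1\partial q_2^2}(c_i),\; B_i=\tfrac{\partial^3 V}{\partial q_2^3}(c_i),\; C_i=\tfrac{\partial^4 V}{\partial q_2^4}(c_i).$$

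Next I would compute the derivatives $A_i,B_i,C_i$. Rather than rotating the potential by hand, I would first establish intrinsic formulas — in the spirit of the Hessian computation in Lemma~\ref{thm:lem1} — expressing these three derivatives at a critical point $\theta_0$ of $U$ in terms of $U(\theta_0),U''(\theta_0),U'''(\theta_0),U''''(\theta_0)$; the value of $B_i$ already recorded in the proof of Theorem~\ref{thm:order21} serves as a check. The crucial structural observation is that all of this data lives in the quadratic extension $\mathbb{Q}(\lambda_1,\lambda_2)[s]$ with $s^2=6\lambda_1\lambda_2(\lambda_1+\lambda_2-6)$: the coefficients of $E_4$ are linear in $s$, and $e^{i\theta_2}=\frac{s+6\lambda_1}{s-6\lambda_2}$ rationalises (multiply numerator and denominator by $s+6\lambda_2$ and reduce $s^2$) to an expression again linear in $s$. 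Consequently each of $A_i^2,B_i^2,C_i$ is of the form $\alpha+\beta s$ with $\alpha,\beta$ rational in $\lambda_1,\lambda_2$, hence polynomial in $k_1,k_2$ after clearing denominators and substituting $\lambda_i=\tfrac12(k_i-1)(k_i+2)+1$.

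This is what produces a quadratic form. Collecting the $s$-free and the $s$-linear parts, the condition at $c_i$ takes the shape $P_i+sR_i=0$, where $P_i$ and $R_i$ are linear in $\big(f_1(k_i),f_2(k_i),f_3(k_i)\big)$ with coefficients polynomial in $k_1,k_2$. Isolating the surd and squaring, with $s^2=6\lambda_1\lambda_2(\lambda_1+\lambda_2-6)$, yields the polynomial consequence
$$P_i^2-6\lambda_1\lambda_2(\lambda_1+\lambda_2-6)\,R_i^2=0,$$
which is a quadratic form $Q_{k_1,k_2}\big(f_1(k_i),f_2(k_i),f_3(k_i)\big)=0$ whose coefficients depend polynomially on $k_1,k_2$; this is the stated shape, and, being a necessary condition for integrability, it is exactly what the later non-integrability argument needs. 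Finally, the two Darboux points play symmetric roles: interchanging them realises $k_1\leftrightarrow k_2$ together with $s\mapsto -s$, and since squaring kills the sign of $s$, the condition at $c_2$ is the same form with the arguments swapped, namely $Q_{k_2,k_1}\big(f_1(k_2),f_2(k_2),f_3(k_2)\big)=0$.

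The main obstacle is the derivative bookkeeping at $c_2$: the rotation angle $\theta_2$ is irrational over $\mathbb{Q}(\lambda_1,\lambda_2)$ and the renormalisation (generically $U(\theta_2)\neq1$) rescales $A_2,B_2,C_2$, so a direct substitution produces bulky expressions in which $s$ appears to high degree before reduction. The real work is to perform these reductions modulo $s^2=6\lambda_1\lambda_2(\lambda_1+\lambda_2-6)$ systematically (best delegated to a computer algebra system) and to confirm that the outcome is governed by a single symmetric quadratic form $Q$ under the clean $k_1\leftrightarrow k_2$ exchange, rather than by two unrelated expressions.
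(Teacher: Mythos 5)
Your proposal is correct and follows essentially the same route as the paper: apply Theorem~\ref{thm:main1} at each of the two Darboux points to obtain conditions linear in $f_1(k_i),f_2(k_i),f_3(k_i)$ with coefficients in the quadratic extension $\mathbb{Q}(\lambda_1,\lambda_2)[s]$, then eliminate $s$ by multiplying each condition with its conjugate under $s\mapsto -s$ (your ``isolate the surd and square'' is exactly the paper's product $(C_i)\times\hbox{subs}(s=-s,(C_i))$, since $(P_i+sR_i)(P_i-sR_i)=P_i^2-s^2R_i^2$), and finally invoke the symmetric roles of $\lambda_1,\lambda_2$ to get the swapped condition $Q_{k_2,k_1}=0$. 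The extra details you supply (evenness of $k_1,k_2$ via Theorem~\ref{thm:order21}, rationalisation of $e^{i\theta_2}$, the normalisation bookkeeping) are consistent elaborations of the paper's terser argument.
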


\begin{proof}
We use Theorem~\ref{thm:main1} and compute the derivatives of the
potentials in the family~$E_4$. These derivatives depend rationally
on $k_1$, $k_2$, and~$s$. As there are two Darboux points, we get two conditions
$(C_1),(C_2)$ linearly dependent on $f_1(k_1),f_2(k_1),f_3(k_1)$ or
$f_1(k_2),f_2(k_2),f_3(k_2)$ respectively for each Darboux point. To
remove the quadratic extension $s$, we make the product $(C_1)\times
\hbox{subs}(s=-s,(C_1))$ and $(C_2)\times
\hbox{subs}(s=-s,(C_2))$. The fact that in the potentials of $E_4$,
the two parameters $\lambda_1$ and $\lambda_2$ play a symmetric r\^{o}le produces
the two conditions $Q_{k_1,k_2}=0$ and $Q_{k_2,k_1}=0$.
\end{proof}

\begin{rem}
The conditions $Q_{k_1,k_2},Q_{k_2,k_1}$ are not equivalent to the conditions $(C_1),(C_2)$. We can solve $(C_1)$ in the quadratic extension and get for example that $s$ should be rational because $f_1,f_2,f_3$ are always rational (this can be proven even without the P-finite recurrences since they correspond to a particular term in the series expansion of rational expressions in $t,P_n(t),Q_n(t)$). We get that 
\begin{equation}\begin{split}
\sqrt{3 k_1k_2(k_2+1)(k_1+1)(k_1+k_1^2+k_2+k_2^2-12)}\;\;\in\mathbb{N}
\end{split}\end{equation}
if some generic condition depending on the $f_i(k_2),f_i(k_1)$ is satisfied. It corresponds to a Diophantine equation but it does not possess the nice properties we used to solve Lemma~\ref{lem:diophantine1}. We know moreover that $(k_1,k_2)$ should be even. A direct search produces the picture given in Figure~\ref{fig.dio2}.
\end{rem}

\begin{figure}
\begin{center}
\includegraphics[width=8cm]{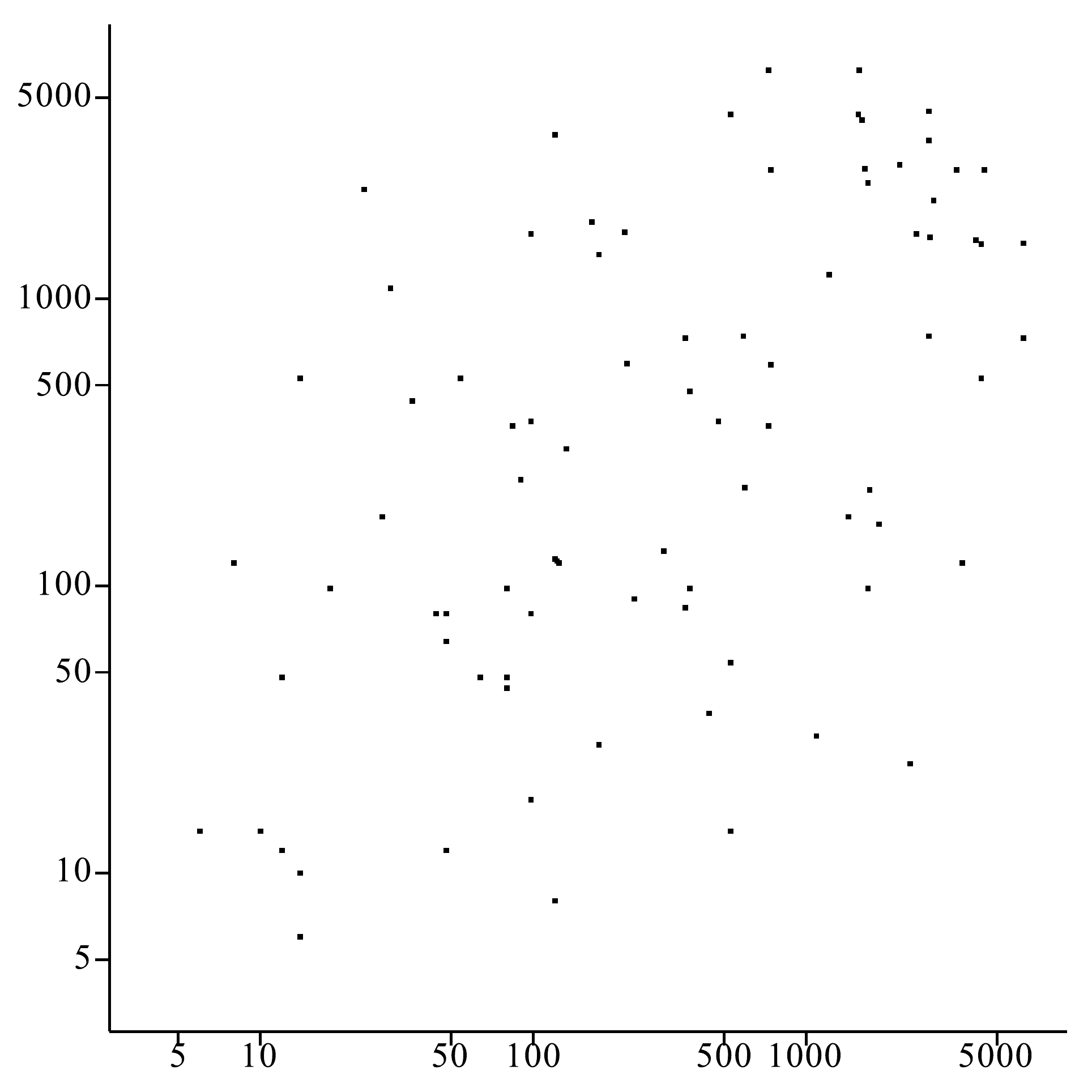}
\end{center}
\caption{Each dot corresponds to a possible even $(k_1,k_2)$. The solutions seems to be unbounded, and the set is probably Zarisky dense. Thus, no practical algebraic information can be extracted from this constraint. There are infinitely many solutions (because the diagonal part reduces to a Pell equation) with simultaneously arbitrarily high $k_1,k_2$. So here Theorem~\ref{thm:main1} is useless without Theorem~\ref{thm:main3}.}
\label{fig.dio2}
\end{figure}

\begin{thm} \label{thm:order32}
The third order integrability condition for $E_4$ is never satisfied except for $(k_1,k_2)=(2,2)$.
\end{thm}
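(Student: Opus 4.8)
The plan is to apply the machinery of Theorem~\ref{thm:order30} simultaneously at both Darboux points of~$E_4$. By Theorem~\ref{thm:order21} we may restrict to even pairs $(k_1,k_2)$, and since the two conditions $Q_{k_1,k_2}=0$ and $Q_{k_2,k_1}=0$ of Theorem~\ref{thm:order31} are exchanged by the involution $(k_1,k_2)\mapsto(k_2,k_1)$, we may assume $k_1\le k_2$. Into each condition I substitute the closed-form approximations of $f_1,f_2,f_3$ provided by Theorem~\ref{thm:main3}, so that $f_i(k_j)$ is replaced by an explicit element of $\mathbb{R}(k_j)[H(k_j/2)]$ with relative error at most $10^{-5}$ as soon as the relevant index exceeds~$100$, i.e. $k_j\ge 200$. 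As $Q$ is a quadratic form in $(f_1,f_2,f_3)$ with coefficients polynomial in $k_1,k_2$, the error-amplification lemma applies once the sign pattern of the summands has been checked to be constant; as in the earlier theorems this keeps the amplification factor below $33/32$, hence the relative error of each approximate condition $\tilde Q_{k_1,k_2},\tilde Q_{k_2,k_1}$ below $\tfrac{33}{32}\cdot 10^{-5}$.

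I then dispose of the two finite regimes. If both $k_1,k_2<200$ we are left with a finite grid, on which exhaustive evaluation (using the exact rational values of the $f_i$ from their P-finite recurrences) finds only the pair $(2,2)$. If $k_1<200\le k_2$, then for each of the finitely many admissible small even values of $k_1$ the numbers $f_1(k_1),f_2(k_1),f_3(k_1)$ are known exact rationals, so the first condition $Q_{k_1,k_2}=0$ is a \emph{genuine} polynomial equation in $k_2$ with explicitly computable coefficients; its finitely many real roots are isolated and tested against the second condition, which requires no asymptotics. Thus the only infinite regime left is $k_1,k_2\ge 200$ — precisely the \emph{doubly unbounded} case in which both eigenvalues grow together, and where Theorem~\ref{thm:main3} is indispensable.

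For this last regime I would extract the dominant part of each approximate condition. Since $f_1(2n),f_3(2n)\sim n^{-2}$ while $f_2(2n)\sim n^{-4}$, the $\{f_1,f_3\}$-block of the quadratic form dominates, and with $k_1=2n_1$ the expression $\tilde Q_{k_1,k_2}$ equals $n_1^{-4}$ times a bivariate expression whose leading contribution is a polynomial $P(k_1,k_2)$; by symmetry the second condition contributes $P(k_2,k_1)$. The task reduces to showing that the bivariate system $P(k_1,k_2)=0$, $P(k_2,k_1)=0$ has no common real zero with both coordinates large, the subleading terms and the harmonic-number contributions being absorbed into the error budget. After enclosing $H$ between explicit rational functions, this is a purely semialgebraic statement about two bivariate polynomials, decidable by resultants together with cylindrical algebraic decomposition, exactly the tools already used in Lemma~\ref{lem:diophantine1}.

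The hard part will be this two-variable region. Unlike the one-variable situation, a single condition $\tilde Q=0$ defines a curve running off to infinity (compatible with the Zariski-dense solution picture of Figure~\ref{fig.dio2} for the related constraint~$(C_1)$), so no bound on the solutions can come from one equation alone; one must show that the two mirror curves fail to meet at large even integer points. Three features make this delicate: the conditions are known only within a $10^{-5}$ relative error, so ``never satisfied'' has to be read as ``never satisfied within the error bounds''; harmonic-number terms appear in the asymptotics and must be rigorously enclosed before the problem becomes polynomial; and the bivariate cylindrical algebraic decomposition that certifies non-intersection is computationally the most expensive step of the entire argument.
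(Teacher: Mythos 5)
Your reduction to even pairs, the error-amplification bookkeeping, the finite grid $k_1,k_2<200$, and the mixed regime $k_1<200\leq k_2$ (where $Q_{k_1,k_2}=0$ becomes an exact polynomial equation in $k_2$) are all reasonable. The gap is in the only regime that genuinely matters, $k_1,k_2\geq 200$. Your plan there rests on the claim that the $\{f_1,f_3\}$-block of the quadratic form dominates because $f_1(2n),f_3(2n)\sim n^{-2}$ while $f_2(2n)\sim n^{-4}$; but the entries of $Q$ carry coefficients that are polynomials in $(k_1,k_2)$ of different degrees, and these can compensate the faster decay of $f_2$. This already happens in the explicit conditions of Theorem~\ref{thm:order30}: there $f_1$ and $f_2$ are multiplied by quartic coefficients and $f_3$ only by a quadratic one, so the three terms have sizes of order $k^2$, $1$, $1$ --- not the pattern your heuristic predicts (the $f_2$-term is comparable to the $f_3$-term, not negligible). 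Without the actual coefficient degrees of $Q_{k_1,k_2}$ no dominant bivariate polynomial $P$ can be extracted, so the resultant/CAD step has nothing to act on; and even granting such a $P$, certifying ``never satisfied within the $10^{-5}$ error bounds'' requires a quantitative lower bound on $|P|$ over the whole region, not mere emptiness of its real zero set, which a CAD of $P$ alone does not deliver. So the doubly unbounded case --- which you yourself flag as the hard part --- remains an unproved plan.

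The paper closes this regime by a route your proposal explicitly rules out: it derives a bound from \emph{one} equation alone. The condition $Q_{k_2,k_1}(f_1(k_2),f_2(k_2),f_3(k_2))=0$ is a quartic in $k_1$ whose Galois group is $D_4$, so its roots can be written as $k_1=-\frac{1}{2}+\sqrt{F_1(k_2)+wF_2(k_2)}$ with $w^2$ an explicit polynomial in $f_1(k_2),f_2(k_2),f_3(k_2),k_2$. Substituting the approximations of Theorem~\ref{thm:main3} (with error amplification checked to stay below $1+10^{-3}$), one proves that for even $k_2\geq 200$ one has $w\neq 0$, $F_1(k_2)<0$, $F_2(k_2)<0$, and $F_1(k_2)^2-w^2F_2(k_2)^2>0$, hence $F_1\pm wF_2<0$ for both valuations of the square root and every root $k_1$ is non-real. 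Thus the single condition $Q_{k_2,k_1}=0$ has no real solution $k_1$ at all once $k_2\geq 200$; by the $k_1\leftrightarrow k_2$ symmetry the companion condition forces $k_1<200$ as well, and everything collapses to the finite grid, where exhaustive testing yields only $(2,2)$. This one-variable sign analysis replaces your bivariate intersection problem (and subsumes your mixed regime), and it contradicts your assertion that no bound can come from one equation: the curve $Q_{k_2,k_1}=0$ simply has no real points with $k_2\geq 200$. This is consistent with Figure~\ref{fig.dio2}, since that picture depicts only the weaker rationality constraint on $s$, not the full third-order condition.
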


\begin{proof}
Recall that the parameters $(k_1,k_2)$ need to be both even for a potential $E_4$ to be integrable at order $2$ near all Darboux points. We begin by solving $Q_{k_2,k_1}(f_1(k_2),f_2(k_2),f_3(k_2))=0$ in $k_1$. This is a polynomial of degree $4$ in $k_1$ and as a polynomial, its Galois group is $D_4$. This allows us to write the 
solution in a relatively simple form
\begin{equation}\begin{split}\label{eq11}
k_1=-\frac{1}{2}+\sqrt{F_1(k_2)+wF_2(k_2)}\quad\hbox{ with }\quad w^2=\\
9(k_2+2)^2(k_2-1)^2f_1(k_2)f_2(k_2)-6(k_2+3)(k_2-2)f_2(k_2)f_3(k_2)+36f_3(k_2)^2
\end{split}\end{equation}
where $F_1,F_2\in\mathbb{Q}(f_1,f_2,f_3,k_2)$. Moreover, $k_1,k_2$ are even integers. Let us prove that in fact, for even $k_2\geq 200$, the expression
$$-\frac{1}{2}+\sqrt{F_1(k_2)+wF_2(k_2)}$$
is always complex for all possible valuations of the square roots. To have real values, we need that $F_1(k_2)+wF_2(k_2)$ be positive for at least one valuation of the square root. Let us begin by proving that $w$ never vanishes. The function $w^2$ is a polynomial in $\mathbb{Q}[f_1,f_2,f_3,k_2]$. Thanks to Theorem~\ref{thm:main3}, we can express $f_1,f_2,f_3$ in $k_2$ with controlled relative error. We check that the amplication of the error is small after summation of all terms (here it is less than $1+10^{-3}$) and that the approximated expressions never vanish. Now we need to prove that
$$F_1(k_2)+wF_2(k_2)< 0 \hbox{ and } F_1(k_2)-wF_2(k_2)< 0.$$
We first prove that $F_2(k_2)$ and $F_1(k_2)$ (which are in $\mathbb{Q}[f_1,f_2,f_3,k_2]$ of degree $3,4$ in $f_i$ respectively) are always negative. Then we just have to prove that
\begin{align*}
\frac{F_1(k_2)} {wF_2(k_2)} > 1 
\;\Longleftrightarrow \;\frac{F_1(k_2)^2} {w^2F_2(k_2)^2}> 1 
\;\Longleftrightarrow\; F_1(k_2)^2-w^2F_2(k_2)^2> 0.
\end{align*}
The last expression is in $\mathbb{Q}[f_1,f_2,f_3,k_2]$ (of degree $8$ in $f_i$), so we can prove this statement. Again we compute the error amplification of the sum and it stays below $1+10^{-3}$, and the error is then still less than $10^{-4}$. Eventually, we prove that this approximated expression never vanishes and is always positive.
For the remaining cases, we use exhaustive testing and we find only one solution $(k_1,k_2)=(2,2)$.
\end{proof}

The case $(k_1,k_2)=(2,2)$ corresponds to the second case of Theorem~\ref{thm:main2}. It is really integrable with a quadratic in momenta additional first integral which is given in \cite{6} page 107 case (8).

\section{Remaining Cases and Conclusion}

The remaining cases are the ones which do not posess a non-degenerate Darboux point.
\begin{thm}\label{thm:exceptions} Consider the set of potentials $V$ given by \eqref{mat_eq} and suppose that $V$ does not possess a non-degenerate Darboux point $c$. If $V$ is meromorphically integrable, then $V$ belongs to one of the families
\begin{align*}
V & =\frac{1}{r}\left( a+be^{i\theta}\right), & V =\frac{1}{r}\left( a+be^{2i\theta}\right),\\
V & =\frac{1}{r}\left( a+be^{3i\theta}\right), & V =\frac{1}{r}\left( a+be^{i\theta}\right)^3,
\end{align*}
with $a\in\mathbb{C}$, $b\in\mathbb{C}^*$.
\end{thm}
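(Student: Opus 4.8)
The plan is to reduce everything to the root structure of the polynomial $h(z)=a+bz+cz^2+dz^3$ (where $V=r^{-1}U$ and $U(\theta)=h(e^{i\theta})$) and then to exploit the only tool still available once no non-degenerate Darboux point exists, namely the degenerate case of Theorem~\ref{thm:Morales}. First I would translate the hypothesis through Lemma~\ref{thm:lem1}: Darboux points with $c_3\neq0$ correspond to critical points $\theta_0$ of $U$, and $U'(\theta_0)=0$ is equivalent to $h'(z_0)=0$ with $z_0=e^{i\theta_0}\neq0$, while non-degeneracy means $U(\theta_0)=h(z_0)\neq0$. Hence the absence of a non-degenerate Darboux point says precisely that every nonzero root $z_0$ of $h'$ satisfies $h(z_0)=0$, i.e.\ is a double root of~$h$.

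Next I would run a short case analysis on $\deg h$. If $\deg h\le1$, then $h'$ has no nonzero root and (discarding the constant case, which has non-degenerate Darboux points in every direction and is therefore outside the hypothesis) we get $h=a+bz$ with $b\neq0$, the first family. If $\deg h=2$, then either $b=0$, giving $h=a+cz^2$ and the second family, or $b\neq0$, which forces the unique root $z_0=-b/(2c)\neq0$ of $h'$ to be a double root, i.e.\ $h=c(z-z_0)^2$. If $\deg h=3$, the quadratic $h'$ cannot have two distinct nonzero roots that are both double roots of $h$ (that would make $\deg h\ge4$); the surviving possibilities are $b=c=0$ (the third family $h=a+dz^3$), a double root $z_0\neq0$ of $h'$, which is then forced to be a triple root $h=d(z-z_0)^3$ (the fourth family, after extracting a cube root of $d$), and finally the case $b=0$, $c\neq0$, where the nonzero root $z_0=-2c/(3d)$ of $h'$ is a double but not triple root, so $h=d(z-z_0)^2(z-z_3)$ with $z_3\neq z_0$.

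The two ``leftover'' cases $h=c(z-z_0)^2$ and $h=d(z-z_0)^2(z-z_3)$ are not among the four listed families, so the crux is to show they are not meromorphically integrable. Each has a single Darboux point, located at the double root $z_0\neq0$, and it is degenerate because $h(z_0)=0$. Since the eigenvalue formula of Lemma~\ref{thm:lem1} assumes $U(\theta_0)\neq0$, I would recompute the Hessian directly: evaluating its matrix at $U(\theta_0)=0$ gives $\nabla^2V(c)=\frac{U''(\theta_0)}{r_0^5}\big(\begin{smallmatrix}c_2^2 & -c_1c_2\\ -c_1c_2 & c_1^2\end{smallmatrix}\big)$, a rank-one matrix with spectrum $\{0,\,U''(\theta_0)/r_0^3\}$. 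Using $U''(\theta_0)=-z_0^2 h''(z_0)$ together with the fact that $h''(z_0)\neq0$ precisely when $z_0$ is not a triple root, the spectrum is $\{0,\lambda\}$ with $\lambda\neq0$ in both leftover cases. By the degenerate case of Theorem~\ref{thm:Morales}—which applies because our notion of integrability requires the first integral to be meromorphic including $r=0$, where the Bessel-type variational equation $\ddot X=\lambda t^{-3}X$ has its irregular singularity—such a $\lambda\neq0$ obstructs meromorphic integrability. This eliminates the leftover cases, so an integrable potential with no non-degenerate Darboux point must lie in one of the four stated families.

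I expect the main obstacle to be the correct treatment of the degenerate Darboux point: the formula of Lemma~\ref{thm:lem1} breaks down at $U(\theta_0)=0$, so one must redo the Hessian computation and, crucially, separate the harmless triple-root case $h=d(z-z_0)^3$ (where $h''(z_0)=0$ gives spectrum $\{0\}$ and Theorem~\ref{thm:Morales} yields no constraint, consistent with the status of that family being left open) from the obstructed double-root cases where $h''(z_0)\neq0$. Verifying the equivalence $h''(z_0)=0\iff z_0$ is a triple root, and checking that no further Darboux points are hiding—in particular that the root $z=0$ of $h'$ never produces one, since $e^{i\theta_0}\neq0$—is the delicate part; the remaining bookkeeping of matching the four cases to the displayed normal forms is routine.
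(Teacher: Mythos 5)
Your proposal is correct and takes essentially the same route as the paper: both reduce the hypothesis to the statement that every nonzero root of $h'$ is a double root of $h$, and both eliminate the double-but-not-triple root cases (your two ``leftover'' potentials) via the degenerate case of Theorem~\ref{thm:Morales}, which forces $\Sp\left(\nabla^2 V(c)\right)\subset\{0\}$, i.e.\ $U''(\theta_0)=0$, and hence a triple root. The only difference is expository: the paper splits into ``no Darboux point'' versus ``a degenerate Darboux point exists'' and asserts the constraint $U''(0)=0$ outright, whereas you enumerate the root structures by degree and carry out the Hessian computation at the degenerate point explicitly---precisely the details the paper leaves implicit.
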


\begin{proof}
First let us suppose that $V$ does not possess any Darboux point $c$. This means that the function
$$U(\theta)=a+be^{i\theta}+ce^{2i\theta}+de^{3i\theta}$$
does not possess any critical point. The only possibility is that $U(\theta)=F(e^{i\theta})$ with $F(z)=a+bz^n$, $b\neq 0$. This corresponds to the three first cases of Theorem~\ref{thm:exceptions}. Now suppose there exists one Darboux point $c$ but degenerate. After rotation, we can suppose that the Darboux point corresponds to $\theta=0$. We have moreover the integrability constraint that $U''(0)=0$. This gives the potential
$$V=\frac{a}{r}\left(e^{i\theta}-1\right)^3.$$
After rotation, this corresponds to the fourth case of Theorem~\ref{thm:exceptions}.
\end{proof}

Remark that in contrary to previous sections, we use the hypothesis that the first integrals are meromorphic on $\mathbb{C}^2\times\mathcal{S}$, including $r=0$. If we consider first integrals only meromorphic on $\mathbb{C}^2\times\mathcal{S}^*$, the integrability constraint $U''(0)=0$ does not hold anymore and two additional cases appear in Theorem \ref{thm:main2}
$$V =\frac{1}{r}\left( a+be^{i\theta}\right)^2, \qquad \qquad V =\frac{1}{r}\left(a+be^{i\theta}\right)^2\left(a-2be^{i\theta}\right)$$

The family $V=\frac{1}{r}\left( a+be^{i\theta}\right)$ is integrable as given in \cite{6}, but with an additional first integral only meromorphic on $\mathbb{C}^2\times\mathcal{S}^*$. For the other ones, the integrability status is still unknown. Let us remark now on the open cases. After rotation and dilatation, these cases correspond in fact to a finite number of potentials which are the following (the last two cases being open only for meromorphic first integrals on $\mathbb{C}^2\times\mathcal{S}^*$):
{\setlength{\arraycolsep}{0.15em}
\begin{equation}\begin{array}{rclrclrcl}
V & = & r^{-1}e^{2i\theta},\qquad  &
V & = & r^{-1}\left(e^{2i\theta}-1\right),\qquad  &
V & = & r^{-1}\left(e^{3i\theta}-1\right),\\[0.5em]
V & = & r^{-1}\left(e^{i\theta}-1\right)^3,\qquad &
V & = & r^{-1}\left(e^{i\theta}-1\right)^2,\qquad &
V & = & r^{-1}\left(e^{i\theta}-1\right)^2\left(2e^{i\theta}+1\right).
\end{array}\end{equation}

We cannot study these potentials because we do not have a particular solution to study, or a sufficiently non-degenerate one (studying degenerate Darboux points with higher variational method is in fact useless and does not give any additional integrability condition). This is of course the main weakness of the Morales-Ramis theory. This is not due to the difficulty of applying the Morales-Ramis theory as we treat it in this article, but much more a fundamental limitation that seems hard to overcome. One approach could consist in looking for special algebraic orbits of these systems using a direct search (following Hietarinta \cite{6}). This is not successful for all these potentials.

To conclude, let us remark that our holonomic approach to higher
variational methods is very general, and in no way limited to this
example. This could work at least for all problems about integrability
of homogeneous potentials, as it allows to compute various higher
integrability conditions of any fixed order. This is linked to the
fact that the first order variational equation of a natural
Hamiltonian system often corresponds to a spectral problem of a second
order differential operator, which generates P-finite sequences of
functions, which in turn appear in the study of higher variational
equations. We could also wonder if these arbitrary high eigenvalues
are really possible, and if this work is only conceptual and in
practice useless. Indeed, very high eigenvalues should correspond to
very high degree first integrals, and counting the number of
conditions and number of free parameters for the existence of such
high degree first integrals strongly suggests they do not exist. But
this intuition is wrong, as Andrzej J. Maciejewski, Maria Przybylska
found quite recently such an example in dimension~$3$. This is
probably linked to the fact that most of integrable cases come from
ultra-degenerate cases, as in our analysis: the generic case $E_4$
contains only one possibility, and when we look at the third order
integrability condition, it seems really to be a miracle that this
condition could ever be satisfied. On the contrary, the cases without
Darboux points contain lots of integrable potentials.


\begin{thebibliography}{19}%
\makeatletter
\providecommand \@ifxundefined [1]{%
 \@ifx{#1\undefined}
}%
\providecommand \@ifnum [1]{%
 \ifnum #1\expandafter \@firstoftwo
 \else \expandafter \@secondoftwo
 \fi
}%
\providecommand \@ifx [1]{%
 \ifx #1\expandafter \@firstoftwo
 \else \expandafter \@secondoftwo
 \fi
}%
\providecommand \natexlab [1]{#1}%
\providecommand \enquote  [1]{``#1''}%
\providecommand \bibnamefont  [1]{#1}%
\providecommand \bibfnamefont [1]{#1}%
\providecommand \citenamefont [1]{#1}%
\providecommand \href@noop [0]{\@secondoftwo}%
\providecommand \href [0]{\begingroup \@sanitize@url \@href}%
\providecommand \@href[1]{\@@startlink{#1}\@@href}%
\providecommand \@@href[1]{\endgroup#1\@@endlink}%
\providecommand \@sanitize@url [0]{\catcode `\\12\catcode `\$12\catcode
  `\&12\catcode `\#12\catcode `\^12\catcode `\_12\catcode `\%12\relax}%
\providecommand \@@startlink[1]{}%
\providecommand \@@endlink[0]{}%
\providecommand \url  [0]{\begingroup\@sanitize@url \@url }%
\providecommand \@url [1]{\endgroup\@href {#1}{\urlprefix }}%
\providecommand \urlprefix  [0]{URL }%
\providecommand \Eprint [0]{\href }%
\@ifxundefined \urlstyle {%
  \providecommand \doi  [0]{\begingroup \@sanitize@url \@doi}%
  \providecommand \@doi [1]{\endgroup \@@startlink {\doibase
  #1}doi:\discretionary {}{}{}#1\@@endlink }%
}{%
  \providecommand \doi  [0]{doi:\discretionary{}{}{}\begingroup
  \urlstyle{rm}\Url }%
}%
\providecommand \doibase [0]{http://dx.doi.org/}%
\providecommand \Doi [0]{\begingroup \@sanitize@url \@Doi }%
\providecommand \@Doi  [1]{\endgroup\@@startlink{\doibase#1}\@@Doi}%
\providecommand \@@Doi [1]{#1\@@endlink}%
\providecommand \selectlanguage [0]{\@gobble}%
\providecommand \bibinfo  [0]{\@secondoftwo}%
\providecommand \bibfield  [0]{\@secondoftwo}%
\providecommand \translation [1]{[#1]}%
\providecommand \BibitemOpen [0]{}%
\providecommand \bibitemStop [0]{}%
\providecommand \bibitemNoStop [0]{.\EOS\space}%
\providecommand \EOS [0]{\spacefactor3000\relax}%
\providecommand \BibitemShut  [1]{\csname bibitem#1\endcsname}%
%</preamble>
\bibitem [{\citenamefont {Morales-Ruiz}\ and\ \citenamefont
  {Ramis}(2001){\natexlab{a}}}]{11}%
  \BibitemOpen
  \bibfield  {author} {\bibinfo {author} {\bibfnamefont {J.}~\bibnamefont
  {Morales-Ruiz}}\ and\ \bibinfo {author} {\bibfnamefont {J.}~\bibnamefont
  {Ramis}},\ }\bibfield  {title} {\enquote {\bibinfo {title} {Galoisian
  obstructions to integrability of hamiltonian systems},}\ }\href@noop {}
  {\bibfield  {journal} {\bibinfo  {journal} {Methods and Applications of
  Analysis},\ }\textbf {\bibinfo {volume} {8}},\ \bibinfo {pages} {33--96}
  (\bibinfo {year} {2001}{\natexlab{a}})}\BibitemShut {NoStop}%
\bibitem [{\citenamefont {Yoshida}(1987)}]{1}%
  \BibitemOpen
  \bibfield  {author} {\bibinfo {author} {\bibfnamefont {H.}~\bibnamefont
  {Yoshida}},\ }\bibfield  {title} {\enquote {\bibinfo {title} {A criterion for
  the non-existence of an additional integral in {H}amiltonian systems with a
  homogeneous potential},}\ }\href@noop {} {\bibfield  {journal} {\bibinfo
  {journal} {Physica D: Nonlinear Phenomena},\ }\textbf {\bibinfo {volume}
  {29}},\ \bibinfo {pages} {128--142} (\bibinfo {year} {1987})}\BibitemShut
  {NoStop}%
\bibitem [{\citenamefont {Morales-Ruiz}\ \emph {et~al.}(2007)\citenamefont
  {Morales-Ruiz}, \citenamefont {Ramis},\ and\ \citenamefont {Simo}}]{2}%
  \BibitemOpen
  \bibfield  {author} {\bibinfo {author} {\bibfnamefont {J.}~\bibnamefont
  {Morales-Ruiz}}, \bibinfo {author} {\bibfnamefont {J.}~\bibnamefont {Ramis}},
  \ and\ \bibinfo {author} {\bibfnamefont {C.}~\bibnamefont {Simo}},\
  }\bibfield  {title} {\enquote {\bibinfo {title} {Integrability of
  {H}amiltonian systems and differential {G}alois groups of higher variational
  equations},}\ }\href@noop {} {\bibfield  {journal} {\bibinfo  {journal}
  {Annales scientifiques de l'Ecole normale sup{\'e}rieure},\ }\textbf
  {\bibinfo {volume} {40}},\ \bibinfo {pages} {845--884} (\bibinfo {year}
  {2007})}\BibitemShut {NoStop}%
\bibitem [{\citenamefont {Morales-Ruiz}\ and\ \citenamefont
  {Ramis}(2001){\natexlab{b}}}]{12}%
  \BibitemOpen
  \bibfield  {author} {\bibinfo {author} {\bibfnamefont {J.}~\bibnamefont
  {Morales-Ruiz}}\ and\ \bibinfo {author} {\bibfnamefont {J.}~\bibnamefont
  {Ramis}},\ }\bibfield  {title} {\enquote {\bibinfo {title} {Galoisian
  obstructions to integrability of hamiltonian systems ii},}\ }\href@noop {}
  {\bibfield  {journal} {\bibinfo  {journal} {Methods and Applications of
  Analysis},\ }\textbf {\bibinfo {volume} {8}},\ \bibinfo {pages} {97--112}
  (\bibinfo {year} {2001}{\natexlab{b}})}\BibitemShut {NoStop}%
\bibitem [{\citenamefont {Combot}(2011){\natexlab{a}}}]{10}%
  \BibitemOpen
  \bibfield  {author} {\bibinfo {author} {\bibfnamefont {T.}~\bibnamefont
  {Combot}},\ }\bibfield  {title} {\enquote {\bibinfo {title} {Generic
  classification of homogeneous potentials of degree $-1$ in the plane},}\
  }\href@noop {} {\bibfield  {journal} {\bibinfo  {journal} {arXiv:1110.6130}}
  (\bibinfo {year} {2011}{\natexlab{a}})}\BibitemShut {NoStop}%
\bibitem [{\citenamefont {Kimura}(1970)}]{8}%
  \BibitemOpen
  \bibfield  {author} {\bibinfo {author} {\bibfnamefont {T.}~\bibnamefont
  {Kimura}},\ }\bibfield  {title} {\enquote {\bibinfo {title} {On {R}iemann's
  equations which are solvable by quadratures},}\ }\href@noop {} {\bibfield
  {journal} {\bibinfo  {journal} {Funkcialaj Ekvacioj},\ }\textbf {\bibinfo
  {volume} {12}},\ \bibinfo {pages} {269--281} (\bibinfo {year}
  {1970})}\BibitemShut {NoStop}%
\bibitem [{\citenamefont {Hietarinta}(1983)}]{6}%
  \BibitemOpen
  \bibfield  {author} {\bibinfo {author} {\bibfnamefont {J.}~\bibnamefont
  {Hietarinta}},\ }\bibfield  {title} {\enquote {\bibinfo {title} {A search for
  integrable two-dimensional {H}amiltonian systems with polynomial
  potential},}\ }\href@noop {} {\bibfield  {journal} {\bibinfo  {journal}
  {Physics Letters A},\ }\textbf {\bibinfo {volume} {96}},\ \bibinfo {pages}
  {273--278} (\bibinfo {year} {1983})}\BibitemShut {NoStop}%
\bibitem [{\citenamefont {Maciejewski}\ and\ \citenamefont
  {Przybylska}(2005)}]{4}%
  \BibitemOpen
  \bibfield  {author} {\bibinfo {author} {\bibfnamefont {A.}~\bibnamefont
  {Maciejewski}}\ and\ \bibinfo {author} {\bibfnamefont {M.}~\bibnamefont
  {Przybylska}},\ }\bibfield  {title} {\enquote {\bibinfo {title} {Darboux
  points and integrability of {H}amiltonian systems with homogeneous polynomial
  potential},}\ }\href@noop {} {\bibfield  {journal} {\bibinfo  {journal}
  {Journal of Mathematical Physics},\ }\textbf {\bibinfo {volume} {46}},\
  \bibinfo {pages} {062901} (\bibinfo {year} {2005})}\BibitemShut {NoStop}%
\bibitem [{\citenamefont {Maciejewski}\ and\ \citenamefont
  {Przybylska}(2004)}]{3}%
  \BibitemOpen
  \bibfield  {author} {\bibinfo {author} {\bibfnamefont {A.}~\bibnamefont
  {Maciejewski}}\ and\ \bibinfo {author} {\bibfnamefont {M.}~\bibnamefont
  {Przybylska}},\ }\bibfield  {title} {\enquote {\bibinfo {title} {All
  meromorphically integrable {2D} {H}amiltonian systems with homogeneous
  potential of degree 3},}\ }\href@noop {} {\bibfield  {journal} {\bibinfo
  {journal} {Physics Letters A},\ }\textbf {\bibinfo {volume} {327}},\ \bibinfo
  {pages} {461--473} (\bibinfo {year} {2004})}\BibitemShut {NoStop}%
\bibitem [{\citenamefont {Mart{\'\i}nez}\ and\ \citenamefont {Sim{\'o}}(2009)}]{14}%
  \BibitemOpen
  \bibfield  {author} {\bibinfo {author} {\bibfnamefont {R.}~\bibnamefont
  {Mart{\'\i}nez}}\ and\ \bibinfo {author} {\bibfnamefont {C.}~\bibnamefont
  {Sim{\'o}}},\ }\bibfield  {title} {\enquote {\bibinfo {title} { Non-integrability of hamiltonian systems
  through high order variational equations: Summary of results and examples},}\ }\href@noop {} {\bibfield
  {journal} {\bibinfo  {journal} {Regular and Chaotic Dynamics},\ }\textbf {\bibinfo {volume} {14(3)}},\ \bibinfo {pages}
  {323--348} (\bibinfo {year} {2009})}\BibitemShut {NoStop}%    
\bibitem [{\citenamefont {Morales-Ruiz}\ and\ \citenamefont {Simon}(2009)}]{5}%
  \BibitemOpen
  \bibfield  {author} {\bibinfo {author} {\bibfnamefont {J.}~\bibnamefont
  {Morales-Ruiz}}\ and\ \bibinfo {author} {\bibfnamefont {S.}~\bibnamefont
  {Simon}},\ }\bibfield  {title} {\enquote {\bibinfo {title} {On the
  meromorphic non-integrability of some {$N$}-body problems},}\ }\href@noop {}
  {\bibfield  {journal} {\bibinfo  {journal} {Discrete and Continuous Dynamical
  Systems (DCDS-A)},\ }\textbf {\bibinfo {volume} {24}},\ \bibinfo {pages}
  {1225--1273} (\bibinfo {year} {2009})}\BibitemShut {NoStop}%
\bibitem [{\citenamefont {Koutschan}(2009)}]{Koutschan09}%
  \BibitemOpen
  \bibfield  {author} {\bibinfo {author} {\bibfnamefont {C.}~\bibnamefont
  {Koutschan}},\ }\emph {\bibinfo {title} {Advanced Applications of the
  Holonomic Systems Approach}},\ \href@noop {} {Ph.D. thesis},\ \bibinfo
  {school} {RISC, Johannes Kepler University}, \bibinfo {address} {Linz,
  Austria} (\bibinfo {year} {2009})\BibitemShut {NoStop}%
\bibitem [{\citenamefont {Koutschan}(2010){\natexlab{a}}}]{Koutschan10b}%
  \BibitemOpen
  \bibfield  {author} {\bibinfo {author} {\bibfnamefont {C.}~\bibnamefont
  {Koutschan}},\ }\href@noop {} {\enquote {\bibinfo {title}
  {{HolonomicFunctions (User's Guide)}},}\ }\bibinfo {type} {Tech. Rep.}\
  \bibinfo {number} {10-01}\ (\bibinfo  {institution} {RISC Report Series,
  Johannes Kepler University Linz},\ \bibinfo {year} {2010})\ \bibinfo {note}
  {http:/$\!$/www.risc.jku.at/\linebreak[0]%
  research/\linebreak[0]combinat/\linebreak[0]% software/\linebreak[0]
  HolonomicFunctions/}\BibitemShut {NoStop}%
\bibitem [{\citenamefont {Combot}(2011){\natexlab{b}}}]{9}%
  \BibitemOpen
  \bibfield  {author} {\bibinfo {author} {\bibfnamefont {T.}~\bibnamefont
  {Combot}},\ }\bibfield  {title} {\enquote {\bibinfo {title} {Integrability
  conditions at order $2$ for homogeneous potentials of degree $-1$},}\
  }\href@noop {} {\bibfield  {journal} {\bibinfo  {journal} {arXiv:1109.5686}}
  (\bibinfo {year} {2011}{\natexlab{b}})}\BibitemShut {NoStop}%
\bibitem [{\citenamefont {Aparicio-Monforte}\ and\ \citenamefont
  {Weil}(2011)}]{7}%
  \BibitemOpen
  \bibfield  {author} {\bibinfo {author} {\bibfnamefont {A.}~\bibnamefont
  {Aparicio-Monforte}}\ and\ \bibinfo {author} {\bibfnamefont {J.}~\bibnamefont
  {Weil}},\ }\bibfield  {title} {\enquote {\bibinfo {title} {A reduction method
  for higher order variational equations of {H}amiltonian systems},}\ }in\
  \href@noop {} {\emph {\bibinfo {booktitle} {Symmetries and Related Topics in
  Differential and Difference Equations: Jairo Charris Seminar 2009, Escuela de
  Matematicas, Universidad Sergio Arboleda, Bogot{\'a}, Colombia}}},\ \bibinfo
  {series} {Contemporary Mathematics}, Vol.\ \bibinfo {volume} {549}\ (\bibinfo
  {organization} {American Mathematical Society},\ \bibinfo {year}
  {2011})\BibitemShut {NoStop}%
\bibitem [{\citenamefont {Zeilberger}(1990)}]{Zeilberger90}%
  \BibitemOpen
  \bibfield  {author} {\bibinfo {author} {\bibfnamefont {D.}~\bibnamefont
  {Zeilberger}},\ }\bibfield  {title} {\enquote {\bibinfo {title} {A holonomic
  systems approach to special functions identities},}\ }\href@noop {}
  {\bibfield  {journal} {\bibinfo  {journal} {Journal of Computational and
  Applied Mathematics},\ }\textbf {\bibinfo {volume} {32}},\ \bibinfo {pages}
  {321--368} (\bibinfo {year} {1990})}\BibitemShut {NoStop}%
\bibitem [{\citenamefont {Buchberger}(1965)}]{Buchberger65}%
  \BibitemOpen
  \bibfield  {author} {\bibinfo {author} {\bibfnamefont {B.}~\bibnamefont
  {Buchberger}},\ }\emph {\bibinfo {title} {Ein {A}lgorithmus zum {A}uffinden
  der {B}asiselemente des {R}estklassenrings nach einem nulldimensionalen
  {P}olynomideal}},\ \href@noop {} {Ph.D. thesis},\ \bibinfo  {school}
  {University of Innsbruck}, \bibinfo {address} {Innsbruck, Austria} (\bibinfo
  {year} {1965})\BibitemShut {NoStop}%
\bibitem [{\citenamefont {Koutschan}(2010){\natexlab{b}}}]{Koutschan10c}%
  \BibitemOpen
  \bibfield  {author} {\bibinfo {author} {\bibfnamefont {C.}~\bibnamefont
  {Koutschan}},\ }\bibfield  {title} {\enquote {\bibinfo {title} {A fast
  approach to creative telescoping},}\ }\href@noop {} {\bibfield  {journal}
  {\bibinfo  {journal} {Mathematics in Computer Science},\ }\textbf {\bibinfo
  {volume} {4}},\ \bibinfo {pages} {259--266} (\bibinfo {year}
  {2010}{\natexlab{b}})}\BibitemShut {NoStop}%
\bibitem [{\citenamefont {Wimp}\ and\ \citenamefont {Zeilberger}(1985)}]{13}%
  \BibitemOpen
  \bibfield  {author} {\bibinfo {author} {\bibfnamefont {J.}~\bibnamefont
  {Wimp}}\ and\ \bibinfo {author} {\bibfnamefont {D.}~\bibnamefont
  {Zeilberger}},\ }\bibfield  {title} {\enquote {\bibinfo {title} {Resurrecting
  the asymptotics of linear recurrences},}\ }\href@noop {} {\bibfield
  {journal} {\bibinfo  {journal} {Journal of mathematical analysis and
  applications},\ }\textbf {\bibinfo {volume} {111}},\ \bibinfo {pages}
  {162--176} (\bibinfo {year} {1985})}\BibitemShut {NoStop}%  
\bibitem [{\citenamefont {Collins}(1975)}]{Collins75}%
  \BibitemOpen
  \bibfield  {author} {\bibinfo {author} {\bibfnamefont {G.~E.}\ \bibnamefont
  {Collins}},\ }\bibfield  {title} {\enquote {\bibinfo {title} {Quantifier
  elimination for the elementary theory of real closed fields by cylindrical
  algebraic decomposition},}\ }\href@noop {} {\bibfield  {journal} {\bibinfo
  {journal} {Lecture Notes in Computer Science},\ }\textbf {\bibinfo {volume}
  {33}},\ \bibinfo {pages} {134--183} (\bibinfo {year} {1975})}\BibitemShut
  {NoStop}%
\end{thebibliography}
\end{document}